\newcolumntype{L}{>{$}l<{$}}
\newcommand\f{f} 
\newcommand\F{F} 
\newcommand{\U}{\mathcal{U}}
\newcommand{\BB}{\mathcal{B}}
\newcommand\N{\mathbb{N}}
\newcommand\HH{\mathcal{H}}
\newcommand\R{\mathbb{R}}
\newcommand\C{\mathbb{C}}
\newcommand\x{\underline{x}}
\newcommand\y{\underline{y}}
\newcommand\Ss{\mathcal{S}}
\newcommand\E{\mathbb{E}}
\DeclareMathOperator\Var{Var}
\newcommand\dd{\mathrm{d}}
\newcommand\CC{\mathcal{C}}
\newcommand\K{\mathcal{K}}
\newcommand\Pt{\mathcal{P}}
\DeclareMathOperator\Cov{Cov}
\DeclareMathOperator\Ker{Ker}
\newcommand\Card{\mathrm{Card}}
\DeclareMathOperator\Span{Span}
\DeclareMathOperator\Proj{Proj}
\newcommand{\enstq}[2]{\left\{#1~\middle|~#2\right\}}
\newcommand{\cdbr}[2]{\left[#1~\middle|~#2\right]}
\DeclareMathOperator\Vol{Vol}
\newcommand\one{\mathds{1}}
\newcommand\numberthis{\addtocounter{equation}{1}\tag{\theequation}}
\newcommand\jump{\par\medskip}
\newcommand{\nnjump}{}
\newcommand\quand{\quad\text{and}\quad}
\newcommand{\Dfrac}[2]{%
  \dfrac{\displaystyle #1}{\displaystyle #2}%
}
\newcommand{\miknew}[1]{#1}
\newcommand{\mikcancel}[1]{}
\newcommand{\sexy}{$p$-interpolating space}
\newcommand{\horny}{$p$-interpolating space}
\newcommand{\de}{\partial}
\numberwithin{equation}{section}
\newcommand{\be}{\begin{equation}}
\newcommand{\ee}{\end{equation}}
\newcommand{\bega}{\begin{equation}\begin{aligned}}
\newcommand{\eega}{\end{aligned}\end{equation}}
\newcommand{\kop}{\left\{}
\newcommand{\pok}{\right\}}
\newcommand{\tyu}{\left(}
\newcommand{\uyt}{\right)}
\newcommand{\qwe}{\left[}
\newcommand{\ewq}{\right]}
\newcommand{\tpitchfork}{%
  \raise-0.1ex\vbox{
    \baselineskip\z@skip
    \lineskip-.52ex
    \lineskiplimit\maxdimen
    \m@th
    \ialign{##\crcr\hidewidth\smash{$-$}\hidewidth\crcr$\pitchfork$\crcr}
  }%
}
\theoremstyle{plain}
\newtheorem{theo}{Theorem}[section]
\newenvironment{theorem}%
  {\begin{mdframed}[backgroundcolor=white]\begin{theo}}%
  {\end{theo}\par\vspace{0.1cm}\end{mdframed}}
\theoremstyle{plain}
\newtheorem{coro}[theo]{Corollary}
\newenvironment{corollary}%
  {\begin{mdframed}[backgroundcolor=white]\begin{coro}}%
  {\end{coro}\par\smallskip\end{mdframed}}
\theoremstyle{plain}
\newtheorem{lemm}[theo]{Lemma}
\newenvironment{lemma}%
  {\begin{mdframed}[backgroundcolor=white]\begin{lemm}}%
  {\end{lemm}\par\vspace{0.cm}\end{mdframed}}
\theoremstyle{plain}
\newtheorem{prop}[theo]{Proposition}
\newenvironment{proposition}%
  {\begin{mdframed}[backgroundcolor=white]\begin{prop}}%
  {\end{prop}\par\vspace{0.cm}\end{mdframed}}
\theoremstyle{definition}
\newtheorem{defn}[theo]{Definition}
\newenvironment{definition}%
  {\begin{mdframed}[backgroundcolor=white]\begin{defn}}%
  {\end{defn}\par\vspace{0.cm}\end{mdframed}}
\newenvironment{acknowledgements}{%
  \begin{abstract}
}{%
  \end{abstract}
}
\theoremstyle{definition}
\newtheorem{remark}[theo]{Remark}
\def\blfootnote{\gdef\@thefnmark{}\@footnotetext}
\renewcommand*\env@matrix[1][*\c@MaxMatrixCols c]{%
  \hskip -\arraycolsep
  \let\@ifnextchar\new@ifnextchar
  \array{#1}}
\begin{document}

\title{\Huge{The number of critical points of a Gaussian field: finiteness of moments}}
\author{Louis Gass, Michele Stecconi}
\maketitle
\blfootnote{University of Luxembourg}
\blfootnote{This work was supported by the Luxembourg National Research Fund (Grant: 021/16236290/HDSA)}
\blfootnote{Email: louis.gass(at)uni.lu,\quad michele.stecconi(at)uni.lu}

\begin{abstract}
Let $f$ be a Gaussian random field on $\R^d$ and let $X$ be the number of critical points of $f$ contained in a compact subset. 
A long-standing conjecture is that, under mild regularity and non-degeneracy conditions on $f$, the random variable $X$ has finite moments. So far, this has been established only for moments of order lower than three. In this paper, we prove the conjecture. Precisely, we show that $X$ has finite moment of order $p$, as soon as, at any given point, the Taylor polynomial of order \miknew{$p$} of $f$ is non-degenerate. We present a simple and general approach that is not specific to critical points and we provide various applications. In particular, we show the finiteness of moments of the nodal volumes and the number of critical points of a large class of smooth, or holomorphic, Gaussian fields, including the Bargmann-Fock ensemble.
\end{abstract}

\renewcommand\contentsname{} 

\begingroup
\let\clearpage\relax
\vspace{-1cm} 
\setcounter{tocdepth}{2}
\tableofcontents
\endgroup
\section{Introduction}
The study of the nodal set associated with a random field has a long history, and is in particular motivated by the pioneering works of Kac and Rice, see e.g. \cite{Aza09} for a general introduction to this topic. Classical nodal observables include for instance the nodal volume of random fields, the number of nodal components and other related topological quantities.\jump

The exact distribution of a random nodal set is out of reach, and a great amount of research focuses on the asymptotic behavior of the expectation and the variance of nodal quantities, together with associated central and non-central limit theorems, either on a growing domain or as some parameter goes to infinity (as in the random wave model).  We refer for instance to the papers \cite{Zel09,Kri13,Can16,Mar16,Nou19,Die20,Gas21, LETEuler} for random waves asymptotics, \cite{Naz09, Bel18, Bel22} for asymptotics of excursion sets, and \cite{Gay17,GaWe2,stec2019MaxTyp} for higher Betti numbers.\jump

In this context, the study of the moments associated with a random nodal quantity appears as a convenient tool in order to gather information on its distribution. The celebrated Kac--Rice formula (see \cite{Kac43,Ric45}) gives an integral expression for the expectation, the variance and higher moments of the nodal volume of a random field satisfying some mild regularity and non-degeneracy conditions. Moreover, if one is able to compute the asymptotic of all moments, then one can recover limit theorems for the nodal volume by the method of moments, as in \cite{Naz12,Bla19,Anc21c,Gas21t, AL_rootsKost}.\jump

Unfortunately, establishing general conditions on a random field ensuring the finiteness of the $p$-th moment of its nodal volume turns out to be a delicate problem. A mild criterion has been obtained in \cite{Arm23} when the random field takes values in $\R$. The proof is an application of Hermite-Lagrange interpolation, which is a tool only available in dimension $1$, and it does not seem to be adaptable to higher dimension. For moments of order $2$ (see \cite{Bel19,Aza22,Lad22}) one can prove finiteness of the second moment by means of the Kac--Rice formula, complemented by an in-depth analysis of the attraction/repulsion between maxima, minima and saddle points. Recently, the article \cite{Bel22} proved the finiteness of the third moment for the number of critical points of a regular non-degenerate Gaussian field taking values in $\R$, by proving the integrability of the Kac density. The proof relies on a technical Taylor expansion near the singularity of the Kac density and does not seem to be easily adapted to the analysis of higher moments. At last, let us mention the article \cite{Mal94}, that proves the finiteness of every moments for the number of intersection points of two independent Gaussian fields from $\R^2$ to $\R$, again by a technical analysis of the Kac density.\jump

In this paper, we prove the finiteness of the $p$-th moment of the number of critical points of a Gaussian random field on a compact subset, only assuming regularity and the non-degeneracy of the Taylor polynomial expansion up to order \miknew{$p$}. In particular, we recover some of the aforementioned results for the finiteness of low moments. The proof relies on Kergin interpolation \cite{KERGIN}, which is a form of multivariate polynomial interpolation that is well-suited for the analysis of the Kac density. Heuristically, it allows us to pass from a general Gaussian field to a random polynomial field, for which the finiteness of the number of critical points is a direct consequence of Bezout's theorem.\jump

The proof is in fact quite robust and we were able to prove the finiteness of the $p$-th moment of the number of zeros of a large class of Gaussian random field satisfying some mild non-degeneracy assumption, including gradient random fields, random fields with independent coordinates, analytic random fields, etc.
\subsubsection*{Organization of the paper}
The paper is organized as follows. In the introduction we state our main theorems about the finiteness of moments of the number of zeros of a Gaussian random field and provide a heuristic proof. We also develop an application to the Bargmann-Fock Gaussian field and some extensions to a manifold setting, as well as to nodal volume in higher dimensions. In a second part, we introduce concepts related to multivariate interpolation, and in particular Kergin interpolation, which plays a central role in our proof. The last section is devoted to proofs of the main theorems and related auxiliary results.

\subsection{Statement of the mains results}
Let $d$ be a positive integer and $\U$ be an open subset of $\R^d$. For a function $\F:\U\mapsto\R^d$ and a compact subset $K$ of $\U$ we define
\[Z(\F,K) := \enstq{x\in K}{F(x) = 0}.\]
Using the standard multi-index notations, we define for a multi-index $\alpha\in \N^d$ the operator
\[\partial^\alpha = \partial_1^{\alpha_1}\ldots\partial_d^{\alpha_d},\]
that act on functions of class $\CC^{|\alpha|}$ on $\U$. Given a set $Z$, we denote by $\#Z$ its cardinality. In the following, we always identify $\C$ with $\R^2$ and, for a multi-index $\alpha\in\N^d,$ we still denote $\partial^\alpha$ the standard complex differentiation that acts on holomorphic functions on an open subset of $\C^d$. 
\subsubsection{The two main theorems}
We state the two main theorems of this paper about the finiteness of moments of the number of zeros of a random Gaussian field.
\nnjump
\begin{theorem}
\label{thm1}
Let $p,d$ be positive integers, $\U$ be an open subset of $\R^d$ (resp. $\C^d$) and $F$ be a Gaussian random field from $\U$ to $\R^d$ (resp. $\C^d$) with a.s. $\CC^p$ (resp. holomorphic) sample paths. Assume that, for every $x\in \U$, the random Gaussian vector \miknew{$(\partial^\alpha F(x))_{|\alpha|\leq p-1}$} is non-degenerate. Then for every compact subset $K$ of $\U$,
\[\E\qwe\#Z(F,K)^p\ewq<+\infty.\]
\end{theorem}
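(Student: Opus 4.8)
The plan is to reduce the bound on $\E[\#Z(F,K)^p]$ to a statement about a \emph{polynomial} random field, using Kergin interpolation as the bridge. First I would reduce to the real case (the holomorphic case should follow by identifying $\C^d$ with $\R^{2d}$, viewing a holomorphic zero as a real zero of the real-and-imaginary-part field, and checking that holomorphicity is what keeps the Taylor-nondegeneracy hypothesis meaningful there). Next, by compactness and a covering argument, it suffices to bound $\E[\#Z(F,Q)^p]$ for $Q$ a small closed cube, and by a union bound over the at most $p^p$ (unordered) patterns it suffices to control the expected number of $p$-tuples of \emph{distinct} zeros $(x_1,\dots,x_p) \in Q^p$; this is naturally handled by a Kac--Rice-type integral, and the heart of the matter is the integrability near the diagonal, where the $x_i$ collide.

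The key idea is: given $p$ points $x_1,\dots,x_p$ (possibly with multiplicity, i.e.\ allowing collisions on the diagonal), let $\Pi F$ be the Kergin interpolation polynomial of $F$ at those points — a polynomial map $\R^d \to \R^d$ of degree $\le p-1$ that matches $F$, in the appropriate Hermite sense, at $x_1,\dots,x_p$. Crucially, Kergin interpolation only uses the derivatives $(\partial^\alpha F(x_i))_{|\alpha| \le p-1}$, so $\Pi F$ is a Gaussian polynomial field whose non-degeneracy is exactly the hypothesis of the theorem. By Bézout's theorem, a generic such polynomial map has at most $(p-1)^d$ zeros, hence finitely many; and each of the $x_i$ is "almost" a zero of $\Pi F$ — one controls $F(x_i)$ in terms of the $(p-1)$-st order remainder, which vanishes to high order as the points collide. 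So the probability that $F$ has $p$ near-zeros clustered in a small ball is controlled by the probability that a fixed Gaussian polynomial map has a cluster of zeros in that ball, for which one has good tail bounds (e.g.\ via the non-degeneracy of the coefficient Gaussian vector and a quantitative Bézout / anti-concentration estimate). Making this precise, I would set up a Kac--Rice integral for the $p$-tuple and split $Q^p$ into the region where the points are $\varepsilon$-separated (easy: the Kac density is continuous there, being built from a non-degenerate Gaussian, so it's locally integrable away from the diagonal) and the near-diagonal region, where I would change to "center + scale" coordinates and use the interpolation estimate to absorb the singularity of the Kac density against the shrinking volume.

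The main obstacle, I expect, is the near-diagonal analysis: one must show that the singularity of the Kac density as $x_i \to x_j$ is integrable against $\dd x_1 \cdots \dd x_p$, and the gain that makes this work has to come from the polynomial structure — namely, that a degree-$\le(p-1)$ polynomial map cannot have $p$ zeros crammed into a tiny ball unless its coefficients are atypically small, which the non-degeneracy assumption forbids with controlled probability. Concretely I anticipate needing: (i) a clean statement that Kergin interpolation is a bounded linear operator depending only on the order-$(p-1)$ jets at the nodes, with a Taylor-remainder bound of the form $|F(x) - \Pi F(x)| \lesssim (\mathrm{diam})^{p} \sup |\partial^p F|$ on a neighborhood of the nodes; (ii) an estimate, for a \emph{fixed} non-degenerate Gaussian polynomial map $P$ of degree $\le p-1$, on $\PP[P \text{ has} \ge p \text{ zeros within distance } r]$ that beats $r^{-dp}$, or more precisely the corresponding Kac-density bound; and (iii) a routine but careful argument to pass from "$\varepsilon$-near-zeros of $F$" (from honest clusters of zeros of $F$) back to "clusters of zeros of the polynomial $\Pi F$", controlling the error terms uniformly. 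Steps (i) and (iii) I expect to be technical but standard once the interpolation machinery is set up; step (ii), the quantitative Bézout-type anti-concentration for Gaussian polynomials, is where the real work lies, and it is presumably proved in the later sections the excerpt promises.
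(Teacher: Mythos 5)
Your proposal assembles the right ingredients---Kac--Rice for the factorial moment, Kergin interpolation at the $p$ nodes, and B\'ezout's bound for the interpolating polynomial field---but it leaves the central difficulty unresolved, and in a way that the paper's argument is specifically designed to avoid. Your plan hinges on step (ii), a ``quantitative B\'ezout / anti-concentration'' estimate for the probability that a fixed non-degenerate Gaussian polynomial map has $p$ zeros clustered in a ball of radius $r$, which you defer to ``the later sections.'' No such estimate is proved in the paper, and none is needed. Moreover, even granting (ii), your step (iii) has a uniformity problem: the Kergin interpolant $\Pi_{\x}F$ depends on the configuration $\x$ of nodes, so the event ``$F$ has $p$ zeros in a small ball'' is contained only in an uncountable union of events of the form ``$\Pi_{\x}F$ has clustered zeros,'' and a tail bound for a \emph{fixed} Gaussian polynomial cannot be summed over that union. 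Finally, note that B\'ezout alone does not forbid a degree-$(p-1)$ polynomial map in $d\ge 2$ variables from having $p$ zeros in a tiny ball (since $(p-1)^d\ge p$ in general), so the heuristic ``clustered zeros force atypically small coefficients'' is exactly the hard quantitative statement you would need to prove, i.e.\ essentially the integrability of the Kac density near the diagonal---the thing to be shown.

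The paper's mechanism is different and softer. It factors the Kac density as $\rho_F(\y)=R(\y)\,\sigma_F(\y)$ on $K^p\setminus\Delta$, where $R$ is a \emph{universal} function depending only on the polynomial interpolating space $V=(\Pt_p^d(\R))^d$ (obtained by Gram--Schmidt orthonormalization of the evaluation functionals $\delta_{y_k}$ in $V_0^*$ and normalization of the Jacobian polynomials), while $\sigma_F$ is shown, by a compactness argument using the continuity of $\x\mapsto\Pi_{\x}^{\otimes d}$ and the non-degeneracy of the Taylor polynomial (the limit of the Kergin interpolant as the points collide), to be bounded above and below by positive constants on $K^p\setminus\Delta$. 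Integrability of $R$ is then deduced with no near-diagonal expansion at all: applying the same factorization to a non-degenerate Gaussian field $G$ with values in $V$ gives $\int_{K^p\setminus\Delta}R\,\sigma_G=\E\bigl[\#Z(G,K)^{[p]}\bigr]\le (p^d)^{[p]}<+\infty$ by B\'ezout, and $\sigma_G\ge c_G>0$ yields $\int R<+\infty$, hence $\int\rho_F\le C_F\int R<+\infty$. In short, the comparison $\rho_F\le (C_F/c_G)\,\rho_G$ transfers integrability from the polynomial field to $F$ and replaces the anti-concentration estimate you were hoping for; without this (or an equivalent device), your outline does not close.
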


\nnjump
\begin{theorem}
\label{thm2}
Let $p,d$ be positive integers, $\U$ be an open subset of $\R^d$ (resp. $\C^d$) and $f$ be a Gaussian random field from $\U$ to $\R$ with a.s. $\CC^{p+1}$ (resp. holomorphic) sample paths. Assume that, for every $x\in \U$, the random Gaussian vector \miknew{$(\partial^\alpha f(x))_{|\alpha|\leq p}$} is non-degenerate. Then for every compact subset $K$ of $\U$,
\[\E\qwe\#Z(\nabla f,K)^p\ewq<+\infty.\]
\end{theorem}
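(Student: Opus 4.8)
The plan is to derive Theorem~\ref{thm2} as a direct consequence of Theorem~\ref{thm1}, applied to the gradient field $F:=\nabla f=(\partial_1 f,\dots,\partial_d f)$ (in the holomorphic case, $\nabla$ is understood as the complex gradient). Since $Z(\nabla f,K)$ is literally $Z(F,K)$, it is enough to check that $F$ meets the hypotheses of Theorem~\ref{thm1} for the same $p$, $d$, $\U$ and $K$. Smoothness transfers immediately: coordinate-wise differentiation of a Gaussian field is again a Gaussian field, and if $f$ has a.s.\ $\CC^{p+1}$ (resp.\ holomorphic) sample paths then $F$ has a.s.\ $\CC^{p}$ (resp.\ holomorphic) sample paths, which is exactly the regularity required of the field in Theorem~\ref{thm1}.

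The one point deserving a word is the non-degeneracy hypothesis. For $x\in\U$ and a multi-index $\alpha\in\N^d$ with $|\alpha|\le p-1$, one has $\partial^\alpha F(x)=\big(\partial^{\alpha+e_1}f(x),\dots,\partial^{\alpha+e_d}f(x)\big)$, where $e_i\in\N^d$ is the $i$-th standard basis vector. Consequently the Gaussian vector $\big(\partial^\alpha F(x)\big)_{|\alpha|\le p-1}$ is, after discarding repeated entries, precisely the subvector $\big(\partial^\beta f(x)\big)_{1\le|\beta|\le p}$ of the full jet $\big(\partial^\beta f(x)\big)_{|\beta|\le p}$; in any case it is a linear image of that jet. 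By hypothesis $\big(\partial^\beta f(x)\big)_{|\beta|\le p}$ is non-degenerate, i.e.\ its covariance matrix is positive definite; since any principal submatrix of a positive definite matrix is positive definite, the covariance matrix of $\big(\partial^\beta f(x)\big)_{1\le|\beta|\le p}$ is positive definite as well, so $\big(\partial^\alpha F(x)\big)_{|\alpha|\le p-1}$ is non-degenerate.

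Both hypotheses being in place, Theorem~\ref{thm1} applied to $F=\nabla f$ gives $\E\big[\#Z(\nabla f,K)^p\big]=\E\big[\#Z(F,K)^p\big]<+\infty$, which is the claim. I do not expect any real obstacle here: the substance of the statement is entirely carried by Theorem~\ref{thm1}, and Theorem~\ref{thm2} just records the observation that the order-$(p-1)$ Taylor polynomial of $\nabla f$ is controlled by the order-$p$ Taylor polynomial of $f$, so that passing from scalar critical points to vector zeros costs exactly one order in the non-degeneracy assumption.
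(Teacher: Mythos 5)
Your reduction of Theorem~\ref{thm2} to Theorem~\ref{thm1} does not work, and the obstruction is exactly the one the paper flags right after the statement of Theorem~\ref{thm2}: for $F=\nabla f$ the Gaussian vector $(\partial^\alpha F(x))_{|\alpha|\leq p-1}$ can \emph{never} be non-degenerate once $p\geq 2$ and $d\geq 2$. The hypothesis of Theorem~\ref{thm1} is about the full vector indexed by all pairs $(\alpha,i)$ with $|\alpha|\le p-1$ and $1\le i\le d$, i.e.\ a Gaussian vector in $\R^{d\cdot\#\{\alpha:|\alpha|\le p-1\}}$, and by Schwarz's theorem this vector has almost surely equal coordinates in distinct slots (e.g.\ $\partial^{e_1}F_2=\partial_1\partial_2 f=\partial_2\partial_1 f=\partial^{e_2}F_1$). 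Its covariance matrix is therefore singular regardless of any assumption on $f$. You noticed the repetitions (``after discarding repeated entries'') but then verified non-degeneracy only for the reduced vector $(\partial^\beta f(x))_{1\le|\beta|\le p}$; that is not the hypothesis Theorem~\ref{thm1} asks for, so the theorem simply cannot be invoked for a gradient field. Nor can you fix this by silently replacing the hypothesis with the reduced one: the proof of Theorem~\ref{thm1} uses the non-degeneracy to guarantee that the Kergin interpolant $\Pi_{\x}^{\otimes d}F$ is a non-degenerate Gaussian vector on the whole polynomial space $(\Pt_{p-1}^d(\R))^d$, and for a gradient field the interpolant is confined to the proper subspace $\nabla\Pt_{p}^d(\R)$ (Lemma~\ref{lem:gradientKergin}), so it is degenerate on the big space.

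The paper's actual route is to prove both theorems from the abstract Theorem~\ref{thm:general}, choosing as interpolating space not $(\Pt_{p-1}^d(\R))^d$ but the space $V=\nabla\Pt_{p+1}^d(\R)$ of gradients of polynomials (with $V_0=\nabla\Pt_p^d(\R)$), together with the key fact that Kergin interpolation maps gradient fields to gradient polynomial fields. On that smaller space the non-degeneracy of $(\partial^\beta f(x))_{|\beta|\le p}$ does translate into non-degeneracy of the interpolated field, and the universal Bezout bound still applies. So the ``one order of non-degeneracy'' bookkeeping in your last paragraph is correct in spirit, but the substance of Theorem~\ref{thm2} is precisely that the whole argument has to be rerun on the subspace of gradient fields; it is not a corollary of Theorem~\ref{thm1}.
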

This last Theorem \ref{thm2} proves the finiteness of the $p$-th moment of the number of critical points of a regular non-degenerate real or complex Gaussian field. Note that Theorem \ref{thm2} is not a consequence of Theorem \ref{thm1} because the symmetry of the second derivatives prevent the Gaussian vector \miknew{$(\partial^\alpha \nabla f)_{|\alpha|\leq p-1}$} from being non-degenerate.
However, we will prove both theorems with the same method, as a consequence of a general and more abstract result, Theorem \ref{thm:general}, which we discuss below.  \jump

\begin{remark}\label{rem:uniform}
The constant that bounds the $p$-th moment for the number of random zero in Theorem \ref{thm1} is a bounded functional of the associated Gaussian field as a $\CC^p$ (resp. holomorphic) random function, whose expression is directly related to Kac--Rice formula. In particular, under the setting of Theorem \ref{thm1}, if $(F_n)_{n\geq 0}$ is a sequence of Gaussian fields that converges to $F$ for the $\CC^p$ topology (resp. topology of uniform convergence) on $\U$, then 
\[\lim_{n\rightarrow+\infty} \E\qwe\#Z(F_n,K)^p\ewq = \E\qwe\#Z(F,K)^p\ewq,\]
and a similar statement holds for Theorem \ref{thm2}.
\end{remark}
\jump

\subsubsection{Improvement of the central limit theorem in \cite{Bel22}}
The real version of Theorem \ref{thm2} generalizes to any integer $p$ the content of \cite[Thm. 1.6]{Bel22}, yielding a third moment bound on the number of critical points of a smooth and non-degenerate random field. This is a key point in their proof of the central limit theorem for the number of excursion sets, since the number of critical points provides an easy upper bound for the number of excursion sets. \jump

More precisely, for $l\in\R$, $R>0$ and a function $f:\R^d\rightarrow\R$, we denote by $N(R,l)$ the number of connected components of either the set $\{f=l\}$ or the set $\{f\geq l\}$, that are totally contained in the box $\Lambda_R = ]-R,R[^d$. Now let $f$ be a Gaussian random field from $\R^d$ to $\R$ with spatial moving representation
\[f = q\star W,\]
where $q$ is a symmetric function and $W$ is a Gaussian white noise on $\R^d$. Note that the covariance function of the process $f$ has then the expression $q\star q$. According to \cite[Rem. 3.9]{Bel22}, Theorem \ref{thm2} of this present paper leads to a weaker decay assumption for the covariance function of a Gaussian field in \cite[Thm. 1.2]{Bel22} (under a stronger regularity hypothesis), in order to get a central limit theorem for the number of excursions sets.\jump
\begin{corollary}
Assume that
\begin{itemize}
\item $\forall \alpha\in\N^d,\quad\partial^\alpha q \in L^2(\R^d)$
\item There exists $\beta>3d$ and $c\geq 1$ such that for all $|x|\geq 1$,
\[\max_{|\alpha|\leq 2} |\partial^\alpha q(x)|\leq \frac{1}{|x|^\beta}.\]
\end{itemize}
Then there exists a constant $\sigma(l)$ such that
\[\lim_{R\rightarrow +\infty} \frac{\Var(N(R,l))}{\Vol(\Lambda_R)}=\sigma(l),\]
and
\[\frac{N(R,l)-\E[N(R,l)]}{\sqrt{\Vol(\Lambda_R)}}\underset{R\rightarrow+\infty}{\overset{d}{\longrightarrow}} \mathcal{N}(0,\sigma(l)).\]
\end{corollary}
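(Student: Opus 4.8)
The plan is to deduce this corollary by combining our Theorem \ref{thm2} with the central limit theorem machinery already developed in \cite{Bel22}, essentially by checking that the number of excursion/nodal components $N(R,l)$ is dominated by the number of critical points, for which we now have a uniform third-moment bound under weaker covariance decay than previously available.

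First, I would set $f = q\star W$ and record that the assumption $\partial^\alpha q\in L^2(\R^d)$ for all $\alpha$ implies, by standard arguments on moving-average representations (differentiation under the convolution and the isometry property of white noise integrals), that $f$ has a.s. smooth sample paths; in particular it is a.s. $\CC^3$, which is the regularity needed to apply the real case of Theorem \ref{thm2} with $p=3$. Moreover the non-degeneracy of the Gaussian vector $(\partial^\alpha f(x))_{|\alpha|\le 3}$ at every point follows because the covariance $q\star q$ is real-analytic-like enough: concretely, a linear combination $\sum_{|\alpha|\le 3} c_\alpha \partial^\alpha f(x_0)$ has variance $\|\sum_\alpha c_\alpha \partial^\alpha q(\cdot-x_0)\|_{L^2}^2$, which vanishes only if $\sum_\alpha c_\alpha \partial^\alpha q \equiv 0$; since $q$ is not a polynomial (it lies in $L^2$), such a nontrivial finite-order PDE cannot annihilate it, so the vector is non-degenerate. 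Thus Theorem \ref{thm2} applies and, crucially, by Remark \ref{rem:uniform} the bound on $\E[\#Z(\nabla f, K)^3]$ is a bounded functional of the $\CC^3$-data of $f$; by stationarity this yields $\sup_{x\in\R^d}\E[\#Z(\nabla f, x+[0,1]^d)^3] < \infty$, i.e. a uniform local third-moment bound on the number of critical points.

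Next, I would invoke the reduction already made explicit in \cite[Rem. 3.9]{Bel22}: their central limit theorem \cite[Thm. 1.2]{Bel22} for $N(R,l)$ is proved under a decay hypothesis on the covariance together with a finite-third-moment input for the number of critical points, and the only role of the stronger covariance decay in their Theorem 1.2 is to furnish that third-moment control. Since each connected component of $\{f=l\}$ or $\{f\ge l\}$ contained in a unit box must contain a critical point of $f$ in (a neighbourhood of) that box, $N(R,l)$ restricted to any unit box is bounded by the number of critical points there, and hence $N(R,l)$ itself inherits the uniform local $L^3$ bound from the previous paragraph. Feeding this into the argument of \cite{Bel22}, exactly as indicated in their Remark 3.9, the decay assumption on $q$ can be relaxed to $\beta > 3d$ (at the cost of the stronger smoothness hypothesis $\partial^\alpha q\in L^2$ for all $\alpha$, which we have assumed), and the conclusions --- existence of the limiting variance density $\sigma(l)$ and the quantitative CLT --- follow verbatim from their proof.

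The main obstacle, and the only genuinely new ingredient relative to \cite{Bel22}, is supplying the uniform-in-$R$ (equivalently, uniform-in-translation) third-moment bound: one must be careful that Theorem \ref{thm2} gives not merely finiteness on each compact $K$ but a bound depending on $K$ only through its size and the local law of $f$, which by stationarity is translation-invariant. This is precisely what Remark \ref{rem:uniform} provides, since it identifies the bound as a continuous (hence, on the stationary orbit, uniformly bounded) functional of the field's $\CC^3$-jet data. Once this uniform moment bound is in hand, everything else is a citation of the structure of the proof in \cite{Bel22}, so I would keep that part brief and refer the reader to \cite[Thm. 1.2 and Rem. 3.9]{Bel22} for the details of how the moment bound is converted into the stated variance asymptotics and central limit theorem.
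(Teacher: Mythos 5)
Your proposal matches the paper's proof, which is simply the citation of \cite[Thm. 1.2]{Bel22} together with \cite[Rem. 3.9]{Bel22} to replace the exponent $9d$ by $3d$, with Theorem \ref{thm2} (applied with $p=3$) supplying the required third-moment bound on the number of critical points; your expansion of why $f$ satisfies the hypotheses and how the bound is made uniform by stationarity is exactly the intended reading. One minor quibble: the correct reason a nontrivial relation $\sum_{|\alpha|\le 3}c_\alpha\partial^\alpha q\equiv 0$ is impossible for $q\in L^2(\R^d)\setminus\{0\}$ is that $\widehat{q}$ would then be supported on the Lebesgue-null zero set of the symbol $\sum_\alpha c_\alpha (i\xi)^\alpha$, forcing $q=0$ --- not that ``$q$ is not a polynomial.''
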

\begin{proof}
See \cite[Thm. 1.2]{Bel22} and \cite[Rem. 3.9]{Bel22} to replace the constant $9d$ by $3d$.
\end{proof}
\subsubsection{Application to the Bargmann-Fock random field}\label{sec:BF}
The above Theorems \ref{thm1} and \ref{thm2} can be directly applied, for instance, to the real or complex Bargmann-Fock random field, that is defined as follows.  Let $(\gamma_\alpha)_{\alpha\in\N^d}$ be a family of i.i.d. normal variables and let $\psi\colon \R^d\to \R$ be the random field defined as
\be 
\psi(x):=\sum_{\alpha\in\N^d}\frac{\gamma_\alpha}{\sqrt{\alpha!}} x^\alpha,
\ee
with the standard multi-index notation.
The field $\psi$ is a Gaussian analytic function. Replacing the collection real Gaussian random variables $(\gamma_\alpha)_{\alpha\in\N^d}$ by complex Gaussian random variables, the same formula defines the Gaussian entire function $\psi_\C\colon \C^d\to \C,$ considered for instance in \cite{SodinTsirelson}.
The real and complex Bargmann-Fock fields are respectively defined for $x\in\R^d$ and $z\in\C^d$ by
\be 
\varphi(x)=\psi(x)e^{-\frac{\|x\|^2}{2}} \quad \text{and}\quad \varphi_\C(z)=\psi_\C(z)e^{-\frac{\|z\|^2}{2}}.
\ee 
It follows for instance from \cite[p. 64]{Aza09} that the fields $\varphi$ and $\varphi_\C$ satisfy the hypotheses of Theorem \ref{thm2} for all positive integers $p$. We deduce for these two fields the finiteness of all moments for the number of critical points in a compact set.\jump

Similarly, let $F = (f_1,\dots,f_d)$ be a collection of $d$ independent real or complex Bargmann-Fock random fields. Then the Gaussian field $F$ satisfies the hypotheses of Theorem \ref{thm1} for all positive integer $p$ and we deduce the finiteness of all moments for the number of intersection points in a compact set of $d$ independent copies of the real or complex Bargmann-Fock random fields.\jump
 
Note that Theorem \ref{thm1} is not applicable to the Berry random wave model on $\R^d$ (see \cite{Ber77}). Indeed this Gaussian field a.s. satisfies the partial differential equation
\[\Delta f +f=0,\]
which prevent the collection of partial derivatives at a point $x\in\R^d$ from being a non-degenerate Gaussian vector. The adaptation of the proof to this model is still a work in progress.
\subsubsection{Extension to manifolds and higher dimensions}
\newcommand{\chart}{\phi}
\newcommand{\vol}{\mathrm{vol}}
Let $p,d$ be positive integers and $0\leq n<d$ be an integer. In the following, $(M,g)$ is a smooth (resp. complex) Riemannian manifold of dimension $d$. For a function $F\colon M\to \R^{d-n}$ and a compact subset $K$ of $M$ we define 
\[Z(F,K):=\enstq{x\in K}{F(x)=0}\]
Under the hypotheses that the function $F$ is of class $\CC^1$ and that $0$ is a regular value for $F$, the set $Z(F,M)$ is a $\CC^1$ submanifold of $M$ of dimension $n$. We then denote by
\be 
\vol^{n}_g(Z(F,K))=\int_{Z(F,M)}1_K d\vol^n_g 
\ee
its Riemannian $n$-volume measure relative to the metric induced by the inclusion in $(M,g).$ A chart $\chart$ of $M$ is a smooth (resp. holomorphic) function $\chart$ from an open subset $\U$ of $M$ to $\R^d$ (resp. $\C^d$) which is a diffeomorphism onto its image. For every multi-index $\alpha\in \N^d,$ we define the differential operator
\be 
\de^\alpha_\chart f:=\de^\alpha (f\circ \chart^{-1})\circ \chart
\ee
that acts on function of class $\CC^{|\alpha|}$ (resp. holomorphic function) on $\U$. Then Theorem \ref{thm1} and Theorem \ref{thm2} generalize to this more general setting.
\nnjump
\begin{theorem}
\label{thm3}
Let $p,d$ be positive integers $0\leq n<d$ be an integer. Let $M$ be a smooth (resp. complex) Riemannian manifold of dimension $d$ and $F$ be a Gaussian random field from $M$ to $\R^{d-n}$ (resp. $\C^{d-n}$) with a.s. $\CC^p$ (resp. holomorphic) 
sample paths. Assume that for every $x\in M$, there exists a chart $\chart$ defined around $x$ such that the random Gaussian vector \miknew{$(\partial^\alpha_{\chart} F(x))_{|\alpha|\leq p-1}$} is non-degenerate. Then for every compact subset $K\subset M$,
\[\E\qwe\vol_g^{n}\tyu Z(F,K)\uyt^p\ewq<+\infty
\qquad 
\tyu\text{resp. } 
\E\qwe\vol_g^{2n}\tyu Z(F,K)\uyt^p\ewq<+\infty
\uyt
.\]
\end{theorem}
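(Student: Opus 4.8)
The plan is to reduce Theorem \ref{thm3} to its Euclidean counterpart — the arbitrary-codimension nodal-volume statement behind Theorem \ref{thm:general} — by a chart-by-chart localisation. The only two points that are genuinely manifold-specific are the chart-independence of the non-degeneracy hypothesis and the comparison between the Riemannian $n$-volume and the Euclidean one; everything else is bookkeeping, and the real content stays in Theorem \ref{thm:general}.

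First I would localise. Since $K$ is compact, cover it by finitely many chart domains $\U_1,\dots,\U_N\subset M$ (using, for each $x\in K$, a chart around $x$ for which the non-degeneracy hypothesis holds), and pick compact sets $K_i\subset\U_i$ with $K=\bigcup_{i=1}^N K_i$, for instance the supports of a partition of unity subordinate to the cover. From $Z(F,K)=\bigcup_{i=1}^N Z(F,K_i)$, the subadditivity of the $n$-volume, and the elementary inequality $\tyu\sum_{i=1}^N a_i\uyt^p\leq N^{p-1}\sum_{i=1}^N a_i^p$, it suffices to prove $\E\qwe\vol_g^n\tyu Z(F,K_i)\uyt^p\ewq<+\infty$ for each fixed $i$. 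Fixing $i$ and writing $\chart=\chart_i\colon\U_i\to V:=\chart(\U_i)$, set $G:=F\circ\chart^{-1}\colon V\to\R^{d-n}$ (resp. $\C^{d-n}$): it is a Gaussian field with a.s. $\CC^p$ (resp. holomorphic) sample paths, $\chart(K_i)$ is a compact subset of the open set $V$, and $\chart$ restricts to an isometry from $(\U_i,g)$ onto $(V,(\chart^{-1})^*g)$ carrying $Z(F,K_i)$ onto $Z(G,\chart(K_i))$.

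Two observations then close the argument. \emph{Chart-independence}: a change of coordinates induces an invertible linear map on the finite-dimensional space of $(p-1)$-jets at a fixed point, so non-degeneracy of $\tyu\de^\alpha_{\chart'}F(x)\uyt_{|\alpha|\leq p-1}$ for one chart $\chart'$ around $x$ forces it for every chart; hence, by hypothesis, the $(p-1)$-jet $\tyu\de^\alpha G(\chart(x))\uyt_{|\alpha|\leq p-1}$ of $G$ is a non-degenerate Gaussian vector at every point of $\chart(K_i)$, and in particular non-degeneracy of the value and first-order part together with a standard Bulinskaya-type argument shows that $0$ is a.s. a regular value of $G$ on $\chart(K_i)$, so the $n$-volume is meaningful. \emph{Volume comparison}: on a compact neighbourhood of $\chart(K_i)$ in $V$ the metric $(\chart^{-1})^*g$ is bi-Lipschitz equivalent to the Euclidean one with some constant $c\geq 1$, whence $c^{-n}\Vol^n(Z)\leq \vol^n_{(\chart^{-1})^*g}(Z)\leq c^{n}\Vol^n(Z)$ for every $n$-rectifiable $Z\subset\chart(K_i)$ (with $2n$ and the Riemannian metric underlying the Hermitian structure in the complex case). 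Combining the two, it is enough to bound $\E\qwe\Vol^n\tyu Z(G,\chart(K_i))\uyt^p\ewq$, and this is exactly the flat statement: either directly as the special case of Theorem \ref{thm:general}, or by the same Kergin-interpolation argument, which replaces $G$ by a random polynomial field whose nodal volume on a bounded set is controlled by a Bezout-type bound depending only on the degree. Summing over $i$ finishes the proof. The step I expect to be the crux — and it is inherited entirely from Theorem \ref{thm:general} — is the interpolation estimate controlling the Kac density; the manifold passage above is essentially routine.
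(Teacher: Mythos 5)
Your localisation to finitely many charts, the chart-independence of the jet non-degeneracy, and the two-sided comparison between $\mathrm{vol}^n_{(\phi^{-1})^*g}$ and the Euclidean $\Vol^n$ on a compact set all match the paper's opening steps and are correct. The gap is in the sentence where you declare the remaining flat statement to be ``exactly'' a special case of Theorem \ref{thm:general}: it is not. Theorem \ref{thm:general}, and the entire interpolating-space machinery behind it, is formulated only for fields $F\colon\U\to\R^d$ with a zero-dimensional zero set. Its Kac--Rice density involves the square Jacobian determinants $J_{y_k}F=\det\nabla F(y_k)$, which are \emph{polynomials} on the interpolating space $V$ --- this is what makes the normalisation $h^k_{\y}=J_{y_k}\circ\Proj_{\Ker(\delta_{\y})}/\lambda^k_{\y}$ and the compactness argument of Lemma \ref{lem:subsequence} work. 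For a field into $\R^{d-n}$ with $n>0$ the relevant density involves $\sqrt{\det\tyu\nabla F\,\nabla F^{T}\uyt}$, which is not polynomial in the jet, and Property C of Definition \ref{def:interpolating} would have to be replaced by a uniform bound on the \emph{volume} of the zero set of a generic polynomial field on a compact set. Neither adaptation is carried out in the paper nor in your proposal, so the genuinely new content of Theorem \ref{thm3} relative to Theorems \ref{thm1} and \ref{thm2} is precisely the step you treat as already available.

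The paper closes this gap by a different device: after reducing to $\U\subset\R^d$ with the Euclidean volume, it adjoins $n$ independent copies $\varphi_1,\dots,\varphi_n$ of the real Bargmann--Fock field to form $G=(F,\varphi_1,\dots,\varphi_n)\colon\U\to\R^d$ and uses the Kac--Rice/Crofton identity
\[\Vol^n(Z\cap K)=v_n\,\E_\varphi\qwe\#\tyu Z\cap K\cap\{\varphi_1=\dots=\varphi_n=0\}\uyt\ewq,\]
which holds with a \emph{constant} density $v_n$ precisely because the Adler--Taylor metric of the Bargmann--Fock field is the Euclidean one. Jensen's inequality then gives $\E_\varphi\qwe\# Z(G,K)\ewq^p\le\E_\varphi\qwe\# Z(G,K)^p\ewq$, and the right-hand side has finite $\E_F$-expectation by the already-proved square case, Theorem \ref{thm1}, applied to $G$. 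To salvage your route you would have to either supply this slicing argument or genuinely extend Theorem \ref{thm:general} to rectangular fields, which is a nontrivial piece of additional work.
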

\begin{theorem}
\label{thm4}
Let $p,d$ be positive integers, $M$ be a smooth (resp. complex) 
manifold of dimension $d$ and $f$ be a Gaussian random field from $M$ to $\R$ with a.s. $\CC^{p+1}$ (resp. holomorphic) 
sample paths. Assume that for every $x\in M$, there exists a chart $\chart$ defined around $x$ such that the random Gaussian vector \miknew{$(\partial^\alpha_{\chart} f(x))_{|\alpha|\leq p}$} is non-degenerate. Then for every compact subset $K\subset M$,
\[\E\qwe\# Z(\nabla f,K)^p\ewq<+\infty.\]
\end{theorem}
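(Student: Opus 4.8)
The plan is to deduce Theorem \ref{thm4} from its Euclidean version, Theorem \ref{thm2}, by a localization argument. The starting observation is that $Z(\nabla f,K)$ is the set of critical points of $f$ lying in $K$, which is independent of any choice of chart or Riemannian metric: if $\phi\colon\U\to\R^d$ is a chart and $g:=f\circ\phi^{-1}$ denotes the expression of $f$ in the coordinates given by $\phi$, a Gaussian field on the open set $V:=\phi(\U)\subseteq\R^d$ (resp. $\C^d$), then for $x\in\U$ one has $df_x=0$ if and only if $\nabla g$ vanishes at $\phi(x)$, because $d\phi_x$ is invertible. Hence $\#Z(\nabla f,K')=\#Z(\nabla g,\phi(K'))$ for every compact $K'\subseteq\U$. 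Moreover $g$ still has a.s. $\CC^{p+1}$ (resp. holomorphic) sample paths, and the identity $\partial^\alpha_\phi f=\partial^\alpha(f\circ\phi^{-1})\circ\phi$ gives $(\partial^\alpha_\phi f(x))_{|\alpha|\le p}=(\partial^\alpha g(\phi(x)))_{|\alpha|\le p}$, so the non-degeneracy hypothesis of Theorem \ref{thm4} for $f$ in the chart $\phi$ is exactly the non-degeneracy hypothesis of Theorem \ref{thm2} for $g$ at the corresponding point.

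With this dictionary in hand, I would cover and chop up $K$. For each $x\in M$ the hypothesis provides a chart $\phi_x$ defined on an open set $\U_x\ni x$ with $(\partial^\alpha_{\phi_x}f(x))_{|\alpha|\le p}$ non-degenerate. The covariance matrix of the Gaussian vector $(\partial^\alpha_{\phi_x}f(y))_{|\alpha|\le p}$ depends continuously on $y\in\U_x$ — this uses the $\CC^{p+1}$ (resp. holomorphic) regularity of the sample paths, which via Fernique's theorem yields finite moments for the suprema of the relevant derivatives on compact sets, hence continuity of the covariances by dominated convergence — so the set of $y\in\U_x$ at which this vector is non-degenerate is open and contains $x$; call it $V_x$. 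By compactness, $K\subseteq V_{x_1}\cup\dots\cup V_{x_N}$ for finitely many points $x_1,\dots,x_N$, and by the shrinking lemma for compact sets one may choose compact sets $K_1,\dots,K_N$ with $K_j\subseteq V_{x_j}$ and $K=K_1\cup\dots\cup K_N$.

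Setting $g_j:=f\circ\phi_{x_j}^{-1}$ on the open set $V_j:=\phi_{x_j}(V_{x_j})\subseteq\R^d$ (resp. $\C^d$), each $g_j$ satisfies the hypotheses of Theorem \ref{thm2} on $V_j$, and $\phi_{x_j}(K_j)$ is a compact subset of $V_j$, so $\E\big[\#Z(\nabla g_j,\phi_{x_j}(K_j))^p\big]<+\infty$ for every $j$. Since $K\subseteq\bigcup_j K_j$ we have $\#Z(\nabla f,K)\le\sum_{j=1}^N\#Z(\nabla f,K_j)$, and the elementary convexity bound $\big(\sum_{j=1}^N a_j\big)^p\le N^{p-1}\sum_{j=1}^N a_j^p$ for $a_j\ge 0$ gives
\[
\E\big[\#Z(\nabla f,K)^p\big]\le N^{p-1}\sum_{j=1}^N\E\big[\#Z(\nabla f,K_j)^p\big]=N^{p-1}\sum_{j=1}^N\E\big[\#Z(\nabla g_j,\phi_{x_j}(K_j))^p\big]<+\infty .
\]
The real and complex statements are proved in exactly the same way, invoking the corresponding version of Theorem \ref{thm2}.

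The single step that I expect to require genuine care is the openness of the non-degeneracy locus: it is what allows one to replace the pointwise hypothesis ``for every $x\in M$ there exists a chart $\phi$ such that $(\partial^\alpha_\phi f(x))_{|\alpha|\le p}$ is non-degenerate'' by a finite family of charts, each non-degenerate on the whole of its domain, which is precisely the form in which Theorem \ref{thm2} can be applied. This in turn rests on the continuity of the covariance functions of the derivative fields, a routine but not entirely free consequence of the assumed sample-path regularity. Everything else is bookkeeping: the chart dictionary of the first paragraph, a compactness/shrinking argument, and one convexity inequality.
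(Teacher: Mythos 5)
Your proof is correct and follows essentially the same route as the paper: the paper disposes of Theorem \ref{thm4} (and the $n=0$ case of Theorem \ref{thm3}) precisely by covering $K$ with finitely many charts and reducing to Theorem \ref{thm2}, noting that the count of critical points is chart- and metric-independent. You simply make explicit the details the paper leaves implicit --- the chart dictionary $\#Z(\nabla f,K')=\#Z(\nabla(f\circ\phi^{-1}),\phi(K'))$, the openness of the non-degeneracy locus needed to pass from the pointwise hypothesis to charts on which Theorem \ref{thm2} applies, and the bound $\bigl(\sum_j a_j\bigr)^p\le N^{p-1}\sum_j a_j^p$.
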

We will show that Theorem \ref{thm3} and \ref{thm4} follow from Theorem \ref{thm1} and Theorem \ref{thm2}.
\nnjump
\begin{remark} 
Theorem \ref{thm3} also holds for Gaussian sections of a vector bundle and when the volume is replaced with a random variable of the form $\int_{Z(F,K)}Bd\vol_g^n,$ for some bounded measurable function $B\colon \U\to \R.$ Moreover, a similar statement to that in Remark \ref{rem:uniform} holds for Theorem \ref{thm3} and Theorem \ref{thm4}.
\end{remark}
\subsection{Heuristic of proof}

The following discussion gives a heuristic of proof for the real version of Theorem \ref{thm1}. The complex version and the case of critical points will be discussed in the next subsection, which extends the following heuristic to a more general framework.\jump

In the following, $d$ and $p$ are positive integers and $\U$ is a open subset of $\R^d$. We denote by $\CC^p(\U,\R^d)$ the class of $p$ times continuously differentiable functions from $\U$ to $\R^d$. We define the \textit{large diagonal} $\Delta$ of $\U^p$ by
\[\Delta = \enstq{(x_1,\ldots,x_p)\in \U^p}{\exists\,i\neq j\;\text{  s.t.  }x_i=x_j}.\numberthis\label{diag}\]
Given $x\in U$ and $F\in \CC^1(\U,\R^d)$, we denote by $J_xF$ the Jacobian determinant of $F$ at point $x$. We introduce the $p$-factorial power of a number $\alpha\in\R$ as
\[\alpha^{[p]} = \alpha(\alpha-1)\ldots(\alpha-p+1).\]
We first recall the celebrated Kac--Rice formula. 
\nnjump
\begin{theorem}[Kac--Rice formula]
\label{thm03}
Let $\U$ be an open subset of $\R^d$. Let $\F: \U\mapsto \R^d$ be a Gaussian random field of class $\CC^1$ such that for distinct points $y_1,\ldots,y_p$, the Gaussian vector $(F(y_1),\ldots,F(y_p))$ of size $dp$ is non-degenerate. Then for any compact subset $K$ of $U$ one has
\be\label{eq:kr}\E\qwe\#Z(F,K)^{[p]}\ewq = \int_{K^p\setminus \Delta} \rho_F(\y)\dd \y\,\ee
where for $\y\in \U^p\setminus \Delta$
\[\rho_F(\y) = \E\cdbr{\prod_{k=1}^p |J_{y_k}F|}{F(y_1)=\ldots=F(y_p) = 0}\psi_{\delta_{\y} F},\]
and $\psi_{\delta_{\y} F}$ is the density of the vector $(F(y_1),\ldots,F(y_p))$.
\end{theorem}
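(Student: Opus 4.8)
The plan is to deduce the identity from the classical ($p=1$) Rice formula for the expected number of zeros, applied to an auxiliary Gaussian field on an open subset of $\U^p$, and then to recover the integral over $K^p\setminus\Delta$ by an exhaustion argument. The only non-elementary ingredients that I would use as black boxes are the Rice formula for the expectation of the number of zeros of a $\CC^1$ non-degenerate Gaussian field (see e.g.\ \cite{Aza09}) and Bulinskaya's lemma.

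First I would introduce the Gaussian field $G\colon\U^p\to\R^{dp}$ given by $G(\y)=(F(y_1),\dots,F(y_p))$ for $\y=(y_1,\dots,y_p)\in\U^p$. It has $\CC^1$ sample paths, and since its $k$-th block $F(y_k)$ depends only on the variable $y_k$, the Jacobian matrix of $G$ at $\y$ is block-diagonal with diagonal blocks $DF(y_1),\dots,DF(y_p)$, so that $|J_\y G|=\prod_{k=1}^{p}|J_{y_k}F|$. Moreover, for $\y\in\U^p\setminus\Delta$ the points $y_1,\dots,y_p$ are pairwise distinct, hence by hypothesis the Gaussian vector $G(\y)=(F(y_1),\dots,F(y_p))$ is non-degenerate; its density at $0$ is then $\psi_{\delta_\y F}$, it depends continuously on $\y$, and the Gaussian regression $\y\mapsto\E\cdbr{|J_\y G|}{G(\y)=0}$ is finite. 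The classical Rice formula therefore applies to $G$ on the open set $\U^p\setminus\Delta\subset\R^{dp}$: for every compact $\widetilde K\subset\U^p\setminus\Delta$,
\[
\E\big[\#Z(G,\widetilde K)\big]=\int_{\widetilde K}\E\cdbr{|J_\y G|}{G(\y)=0}\,\psi_{\delta_\y F}\,\dd\y=\int_{\widetilde K}\rho_F(\y)\,\dd\y,
\]
the last equality being exactly the two identities noted above.

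Next I would exhaust $K^p\setminus\Delta$ by the compact sets $\widetilde K_m:=\{\y\in K^p:\dist(\y,\Delta)\geq 1/m\}$, $m\geq 1$; these are compact subsets of $\U^p\setminus\Delta$ (recall $\Delta$ is closed in $\R^{dp}$), they increase in $m$, and $\bigcup_m\widetilde K_m=K^p\setminus\Delta$. The zeros of $G$ lying in $K^p\setminus\Delta$ are exactly the ordered $p$-tuples of pairwise distinct zeros of $F$ in $K$, so $\#Z(G,K^p\setminus\Delta)=\#Z(F,K)^{[p]}$ and, for every realization, $\#Z(G,\widetilde K_m)$ increases to $\#Z(F,K)^{[p]}$ as $m\to\infty$ (with the usual conventions when $Z(F,K)$ is infinite — which, by Bulinskaya's lemma, occurs with probability $0$, since a.s.\ $0$ is a regular value of $F$ on a neighbourhood of $K$). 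Applying the monotone convergence theorem on the left, and using $\rho_F\geq 0$ together with monotone convergence on the right, and letting $m\to\infty$ in the displayed identity, yields $\E\big[\#Z(F,K)^{[p]}\big]=\int_{K^p\setminus\Delta}\rho_F(\y)\,\dd\y$, which is \eqref{eq:kr}.

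The substantive input is thus the classical $p=1$ Rice formula itself: its proof combines the area formula for $\CC^1$ maps with a mollification of the Dirac mass at the origin and a dominated-convergence passage to the limit, and it is precisely the non-degeneracy hypothesis that supplies the uniform control on the Gaussian density and on the conditional expectation needed to justify that limit. Within the reduction above, the only genuine difficulty is the diagonal $\Delta$, on which $G$ fails to be non-degenerate so that Rice's formula cannot be invoked directly there; the point to check is that excising it along the sets $\widetilde K_m$ costs nothing in the limit, which is guaranteed by the monotonicity of $m\mapsto\#Z(G,\widetilde K_m)$, the non-negativity of $\rho_F$, and the a.s.\ finiteness of $Z(F,K)$.
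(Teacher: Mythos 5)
Your argument is correct, and it is essentially the standard proof of the result that the paper simply cites: the paper's ``proof'' of Theorem \ref{thm03} is a reference to \cite[Thm.~6.3]{Aza09} together with the discussion in Section \ref{rem:transversality} of the a.s.\ regularity of $0$, and the proof of that cited theorem proceeds exactly as you do — apply the first-moment Rice formula to the product field $G(\y)=(F(y_1),\dots,F(y_p))$ on $\U^p\setminus\Delta$, identify $\#Z(G,K^p\setminus\Delta)$ with $\#Z(F,K)^{[p]}$, and remove the diagonal by monotone convergence along an exhaustion by compact sets. You correctly handle the two points that need care (the block-diagonal Jacobian giving $|J_{\y}G|=\prod_k|J_{y_k}F|$, and the a.s.\ regularity of $0$ for $F$, hence for $G$, which legitimises the $p=1$ Rice formula on each $\widetilde K_m$ and the a.s.\ finiteness of $Z(F,K)$); this second point is precisely what the paper delegates to Section \ref{rem:transversality}. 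So there is no gap; you have supplied the proof that the paper outsources to the literature.
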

\begin{proof}
See \cite[Thm. 6.3]{Aza09} and Section \ref{rem:transversality} of this paper.
\end{proof}

The hypotheses on $F$ in the above Kac--Rice formula ensure that the function $\rho_F$ is well-defined on $\U^p\setminus\Delta$. The strategy adopted in this paper is to show that under suitable conditions on the Gaussian field $F$, the Kac density $\rho_F$ is integrable on $K^p\setminus \Delta$, for any compact subset $K$ of $\U$. The difficulty resides in understanding the behavior of the Kac density near the large diagonal $\Delta$.\jump

On the large diagonal, the expression of the function $\rho_F$ is ill-defined, since both the numerator and the denominator converge to zero as one approaches $\Delta$. 
The usual trick introduced in \cite{Cuz75} to lift the degeneracy of the Gaussian vector $(F(y_1),\ldots,F(y_p))$ when the points $y_1,\ldots y_p$ collapse is based on the notion of divided differences. Let us illustrate this concept the concept with two points $x$ and $y$ in $\R^p$. In that case
\[\rho_F(x,y) = \Dfrac{\E\cdbr{|J_xF||J_yF|}{F(x)=F(y) = 0}}{\sqrt{\det 2\pi\Cov((F(x),F(y))}}.\]
Let $(\vec{e},\vec{e_2},\dots \vec{e_d})$ be any orthonormal basis of $\R^d$ such that $x = y + \|x-y\|\vec{e}$. We denote by $\partial_{\vec{e}}$ the partial derivative operator in direction $\vec{e}$, and by $\nabla_{\vec{e}^\perp}$ the gradient operator on its orthogonal complement $(\vec{e_2},\ldots\vec{e_d})$. Then,
\bega 
\det \Cov(F(x),F(y)) &= \|x-y\|^{2d} \Cov\left(F(x),\frac{F(y)-F(x)}{\|x-y\|}\right)
\\
&\simeq \|x-y\|^{2d} \Cov\left(F(x),\partial_{\vec{e}}F(x)\right).
\eega
and, conditionally on $F(x) = F(y) = 0,$ one has by a Taylor expansion the approximation
\[
J_yF = \det
\begin{pmatrix} 
\partial_{\vec{e}} F(y)
 & \nabla_{{\vec{e}}^\perp} F(y)
 \end{pmatrix}
\simeq 
 \frac{\|x-y\|}{2}\det\begin{pmatrix} 
\partial^2_{\vec{e}} F(y)
 & \nabla_{{\vec{e}}^\perp} F(y)
 \end{pmatrix},
\]
Gathering both estimates,
\be
\rho_F(x,y) \simeq \frac{\|x-y\|^{2-d}}{4}\;\Dfrac{\E\cdbr{\det\begin{pmatrix} 
\partial^2_{\vec{e}} F(x)
 & \nabla_{{\vec{e}}^\perp} F(x)
 \end{pmatrix}^2}{F(x)=\partial_{\vec{e}}F(x)= 0}}{\sqrt{\det 2\pi\Cov((F(x),\partial_{\vec{e}}F(x))}}.
 \ee
Under a non-degeneracy assumption on the random field $F$ and its partial derivatives, the behavior of the function $\rho_F$ is entirely determined by the function $\|x-y\|^{2-d}$, which is locally integrable on $\R^d\times\R^d$ by a polar change of coordinates. For $3$ points, one can carry a similar -but way more technical- computation to show the local integrability of the function $\rho_F$, as done in \cite{Bel22}. For $4$ points and more, one can extends this trick only when the dimension $d=1$. In that case, a corollary of Hermite--Lagrange interpolation shows that when $y_1,\ldots y_p$ are distinct real numbers that converge to a single point $y$,
\[\det \Cov(F(y_1),\ldots,F(y_p)) = \left(\prod_{k\neq l}(y_k-y_l)\right)\Cov(F(y),F'(y),\ldots\ldots,F^{(p-1)}(y)),\]
see \cite{Gas21t,Arm23, Anc21c} for more details on the divided difference trick and Hermite--Lagrange interpolation. Unfortunately, multivariate interpolation is not as well-behaved as univariate interpolation and a direct approach to understand the behavior of the Kac density as its arguments collapse seems intractable as soon as $d>1$.\jump

The underlying heuristic is the following. In dimension $1$, there is only one direction and thus only one way for a collection of $p$ points to collapse, whereas in greater dimension there are many different possibilities for a collection of $p$ points to collapse, depending on their relative configuration. A direct approach to solve this issue is to find an appropriate compactification of $\U^p\setminus\Delta$, the configuration space of $p$ points. This is a classical topic in geometry, with a rich history and several solutions, see \cite{Sinha, Eke14} and references therein. Then one has to find a way to combine the Kac-Rice formula with a convenient compactification. Although it would give a much deeper understanding of the results of this paper, we won't pursue this route.
\jump

\miknew{\begin{remark}
Shortly after the appearence of the first version of this paper, a result in such direction was independently announced by M. Ancona and T. Letendre. 
In their work \cite{AL_cursedpaper} they define the notion of \emph{multijet}, serving as a substitute of the divided difference scheme in arbitrary dimension, and use it to prove the existence of a suitable completion $C_p[\R^d]\supset (\R^d)^p\setminus\Delta$ of the configuration space, obtained by means of a sequence of blow-ups.  Its key property is that, provided that $F\colon \R^d\to \R^{d}$ is a Gaussian field with non-degenerate Taylor polynomial of degree $p-1$ at every point, it follows that the Gaussian field $F^{\times p}:(y_1,\dots,y_p)\mapsto (F(y_1),\dots,F(y_p))$, defined on 
$(\R^d)^p\setminus\Delta$, can be extended to a non-degenerate Gaussian field $mj_p(F,\cdot)$ on $C_p[\R^d]$, with the same zeros as $F^{\times p}$ (actually, $mj_p(F,\cdot)$ is a Gaussian section of a certain vector bundle). This implies that the Kac-Rice formula for the expected number of zeros of $mj_p(F,\cdot)$ in a compact subset $\overline{K^p\setminus \Delta}$ of $C_p[\R^d]$ is finite and also that in fact it computes the factorial $p^{th}$ moment of $\#Z(F,K)$. Using this argument, the authors deduce various results, analogous to our Theorems \ref{thm1}, \ref{thm2} and \ref{thm3}
, all reported in \cite{AL_cursedpaper}. Their work and ours are independent and the proof is different.
\end{remark}
}
\jump

On the other hand, our approach has the advantage of avoiding all the geometric subtleties.
Indeed we found an elementary way to treat the singularity at the diagonal $\Delta$, which seems a bit more general since we bypass the exact description of the possible configurations of points as they collapse. The underlying idea is that for two sufficiently non-degenerate random processes $F_1$ and $F_2$, the associated Kac densities $\rho_{F_1}$ and $\rho_{F_2}$ have the same behavior near the diagonal $\Delta$. It is then sufficient to prove the integrability of the Kac density for one particular random process. The choice of a collection of random Gaussian polynomial allows us to conclude. Indeed, by Bezout's theorem, the number of common roots of $d$ generic $d$-variate polynomials is bounded by a universal constant, namely the product of the degrees of each polynomial.\jump

In order to prove this "universal" near-diagonal behavior, one has to gather exactly which quantity appears in the Kac--Rice formula. Given a point $\y\in\U^p\setminus\Delta$ we only need to know the value of $\F$ and its gradient at points $y_1,\ldots,y_p$. In other words, one needs to understand the random process $\F$ when tested against the linear forms $\delta_{y_k}$ and $\partial_i\delta_{y_k}$ for $1\leq k\leq p$ and $1\leq i\leq d$ that live in the dual space of smooth functions. The first step is to reduce this space to a finite dimensional space, namely the dual space of $d$-variate polynomials with total degree lower than $p$, denoted by $\Pt_p^d(\R)$, and exploit the compactness properties of this finite dimensional space. \jump

To this end, one can consider a form of multivariate interpolation known as \emph{Kergin interpolation} \cite{KERGIN}, defined in Section \ref{sec:kergin}. It has the advantage, in contrast to other forms of multivariate interpolation schemes, to be well-defined for any configuration of points. For a collection of points $x_1,\ldots,x_p$ with possible repetitions, Kergin's interpolation provides a projector $\Pi_{\x}^{\otimes d}$ from the space of all $\CC^{p-1}$ vector fields onto the space $(\Pt_{p-1}^d(\R))^d$ of polynomial vector fields, having the property that
\[\forall\; 1\leq k\leq p,\quad\Pi_{\x}^{\otimes d}F(x_k) = \F(x_k).\]
When some of the points collapse, the partial derivatives are also interpolated. In particular, when $x_1=x_2=\ldots=x_p$ then $\Pi_{\x}^{\otimes d}\F$ is the Taylor polynomial expansion of $F$ of degree $p-1$ at the common value. Then the non-degeneracy hypothesis on $F$ in Theorem \ref{thm1} is equivalent to the fact that locally around a point $x\in \U$, the Kergin interpolant $\Pi_{\x}^{\otimes d}\F$ is a non-degenerate collection of $d$ random $d$-multivariate polynomial.\jump

From this point, the proof is direct. For $\y\in\U^p\setminus\Delta$, we want to write
\[\rho_\F(\y) = R(\y)\sigma_\F(\y),\]
where $R$ is a universal function that captures the diagonal singularity and $\sigma_\F$ is a function that depends on the distribution of $\F$, and that is bounded above and below by positive constants on $\U\setminus\Delta$. This decomposition is achieved by applying the Gram--Schmidt orthonormalization procedure on the family of linear forms $(\delta_{y_k})_{1\leq k\leq p}$, as well as a renormalization of the determinant $J_{y_k}$ on $\Pt_p^d(\R)$. The function $R$ is a functional of the associated transformation matrices and depends only on the scalar product defined on $(\Pt_{p-1}^d(\R))^d$, whereas the residue function $\sigma_F$ is positively bounded thanks to the non-degeneracy assumption the random field.\jump

\subsection{A general statement}\label{sec:genstat}

As mentioned before, the above proof does not work if the random field $F$ is degenerate. For instance, if $F$ is the gradient field of a non-degenerate Gaussian random field $f:\U\rightarrow \R$ then the symmetry of the partial derivatives prevents the Kergin interpolant $\Pi_{\x}^{\otimes d}F$ from being a non-degenerate collection of $d$ random $d$-multivariate polynomials. Nevertheless, one can prove that the Kergin projection of a gradient field is the gradient of a polynomial (see Lemma \ref{lem:gradientKergin}). This means that the Kergin interpolant $\Pi_{\x}^{\otimes d}(\nabla f)$ is non-degenerate on the space of gradients of polynomials $\nabla\Pt_p^d$, and the proof adapts verbatim when replacing the polynomial space $(\Pt_{p-1}^d)^d$ with the polynomial space $\nabla\Pt_p(\R)^d$. This leads to the proof of Theorem \ref{thm2}.\jump

Similarly, for the complex version, one can see a holomorphic field $F:\U\rightarrow\C^d$ as a real field $F:\U\rightarrow\R^{2d}$ satisfying the Cauchy-Riemann equations. One can show in a similar fashion that the Kergin interpolating polynomial, as an element of $(\Pt_{p-1}^{2d}(\R))^{2d}$, is in fact a complex $d$-variate polynomial, that is an element of $(\Pt_{p-1}^d(\C))^d$. These two observations led us to write an abstract form of the theorem that englobes both situations. We replace the Kergin projector
\[\Pi_{\x}^{\otimes d} : \CC^p(\U,\R^d)\longrightarrow (\Pt_{p-1}^d(\R))^d\]
with a more general form
\[K_{\x} : W\longrightarrow V,\]
where for all $x\in \U^p$, the mapping $K_{\x}$ is a surjective mapping from a subspace $W\subset \CC^0(\U,\R^d)$ of functions from $\U$ to $\R^d$, to a finite dimensional subspace $V\subset \CC^0(\U,\R^d)$. The rest of this section is dedicated to a statement of sufficient conditions on $V$ and on a random field taking values in $W$ in order to recover the finiteness of the $p$-th moment for the number of random zeros.
\jump

In the following, $V$ is a vector space. We denote by $\Pt[V]$ the space of all real valued polynomial functions on $V$. If $B$ is a basis of $V$ then one can see $\Pt[V]$ as the polynomial ring with indeterminates the elements of $B$. This space is naturally endowed with a gradation corresponding to the total degree of the associated polynomial. For $p\geq 0$, we denote by $\Pt_p[V]$ the subspace of $\Pt[V]$ of polynomial with total degree lower than $p$. When $V=\R^d$ then $\Pt_p[V]$ coincide with the polynomial space $\Pt_p^d$ introduced earlier.\jump

In the following, we let $p,q$ be positive integers and $\U$ be an open subset of $\R^d$. Let $V$ be a finite dimensional subspace of the space $\CC^q(\U,\R^d)$. For a point $x\in \R^d$, we define the Dirac evaluation map at point $x$ as the mapping $\delta_x(F)= F(x).$ If $\x = (x_1,\ldots,x_p)$ are $p$ points in $\R^d$, we define
\bega 
\delta_{\x}:\CC^0(\U,\R^d)&\longrightarrow (\R^d)^p\\
F&\longrightarrow \tyu F(x_1),\dots, F(x_p)\uyt
\eega
One can see the mapping $\delta_{\x}$ as a collection of $dp$ linear forms on $\CC^0(\U,\R^d)$. We also define the Jacobian determinant
\begin{align*}
J_x:\CC^1(\U,\R^d)&\longrightarrow \R\\
F&\longrightarrow \det \nabla\! F(x)
\end{align*}
According to the previous paragraph, one can see $J_x$ as a polynomial of degree $d$ on $\CC^1(\U,\R^d)$. The following definition ensures that a finite dimensional space $V$ of $\CC^q(\U,\R^d)$ is a relevant interpolation space for the problem of interpolating a function at $p$ distinct points.

%
\nnjump
\begin{definition}[\sexy{}]
\label{def:interpolating}
Let $V$ be a finite dimensional subspace of $\CC^1(\U,\R^d)$. The space $V$ is said to be a \emph{\sexy{}} if it satisfies the following three properties, for any set of $p$ distinct points $y_1,\ldots,y_p$ in $\U$.
\begin{enumerate}[A]
\item\label{a} \miknew{There is a subspace $V_0\subset V$, such that the family $\delta_{\y}|_{V_0}$ of elements of $V_0^*$ is free. Equivalently, $\delta_{\y}(V_0)=(\R^{d})^p.$}
\item\label{b} The polynomials $(J_{y_k})_{1\leq k\leq p}$ are non-zero on the subspace $\Ker(\delta_{\y})\cap V$
\item\label{c} For every compact subset $K$ of $U$, there is a constant $C_K$ and a negligible subset $N_K$ of $V$ such that
\[\forall G\in V\setminus N_K,\quad \Card \enstq{x\in K}{G(x) = 0} \leq C_K.\]
\end{enumerate} 
\end{definition}
\nnjump
Let $G$ be a non-degenerate Gaussian vector taking value in $V$. Property $A$ implies that elements in $V$ can take arbitrary values on points $y_1,\ldots,y_p$, which is required to write the Kac--Rice formula for $G$. Property $B$ implies that the Jacobian determinant at a point $y_k$ is not determined by the evaluation at points $y_1,\ldots,y_p$ and ensures that the Kac--Rice formula for $G$ never vanishes. At last, property $C$ ensures that the random variable $\#Z(G,K)$ is bounded, hence has finite moments of every order, even though the Kac density $\rho_G$ is unbounded near the diagonal.  It mimics the property that the number of common roots of $d$ generic $d$-multivariate polynomial is bounded by a universal constant, namely the products of their degrees, by Bezout's Theorem (see Section \ref{sec.bezout}). Indeed, in the concrete cases that we consider, $V$ is always a space of polynomials. \jump

Now let $W$ be a linear subspace of $\CC^q(\U,\R^d)$. The following definition ensures that $V$ is a relevant interpolation space for functions in $W$.
\nnjump
\begin{definition}[adapted \horny{}]\label{def:interpolator}
Let $W$ be a closed subspace of $\CC^q(\U,\R^d)$.
The space $V$ is said to be a \emph{\horny{} adapted to $W$}  if the three following properties are satisfied :
\begin{enumerate}
\item\label{4} $V$ is a \sexy{}.
\item\label{3} For all $\x\in (\R^d)^p$ and $0\leq k\leq p$, there is a continuous linear map 
\[
\K_{\x}^k: W \longrightarrow V,\]
such that $\K_{\x}^0(W)=V_0$ and for all $F\in W$,
\bega 
\delta_{\x}\F &= \delta_{\x}\K_{\x}^k\F,\quad \forall 0\leq k\leq p
\\
  J_{x_k}\F &=J_{x_k} \K_{\x}^k\F,\quad \forall 1\leq k\leq p
\eega
\item\label{2} $\forall \F\in W$, the mapping $\x\mapsto \K_{\x}^k\F$ is continuous.
\end{enumerate}
We will call the family of maps $\K=\{\K_{\x}^k\colon \x\in (\R^d)^p,k=1,\dots,p\},$ a \emph{$p$-interpolator between $W$ and $V$}.
\miknew{We will say that $\K$ is a \emph{strong} $p$-interpolator, if in addition, the maps $\K_{\x}^k$ are surjective for all $1\le k \le p$.}
\end{definition}
\nnjump
\begin{definition}
We say that $V$ is a \emph{strong} $p$-interpolating space, if there exists a strong $p$-interpolator between $V$ and itself.
\end{definition}
\nnjump
The second assumption is what can be expected of an interpolator of $p$ points and the condition on the Jacobian determinant will be essential in the treatment of the Kac--Rice formula. The last assumption allows us to pass to the limit as a point $\x$ in $\U^d\setminus\Delta$ converges to $\Delta$.\jump

Given $F$ a $\CC^q$ Gaussian random field from $\U$ to $\R^d$, we define the $\CC^q$-support (or just support when the context is clear) of $F$ as the support of the associated Gaussian measure on $\CC^q(\U,\R^d)$ (see also \cite{NazarovSodin2016}). We prove the following general theorem concerning the finiteness of the $p$-th moment of the number of zeros of $F$ on any compact subset $K$ of $\U$.
\nnjump
\begin{theorem}
\label{thm:general}
Let $p,q,d$ be positive integers and $\U$ be an open subset of $\R^d$. Let $F:\U\rightarrow\R^d$ be a $\CC^q$ Gaussian random field with $\CC^q$-support $W$, and let $V$ be a \miknew{strong} \horny{} adapted to $W$, in the sense of Definition \ref{def:interpolator}. Then, Kac--Rice formula for $F$ holds and there exists a function $R_V:\U^p\setminus\Delta\rightarrow\R^+$ of class $\CC^q$, depending only on the space $V$, with the following properties:
\begin{itemize}
\item For any compact subset $K$ of $\U$, the function $R_V$ is integrable on $K^p\setminus\Delta$.
\item \miknew{There is a positive constant $C_F>0$ such that the Kac density $\rho_F$ satisfies
\[ \rho_\F\leq C_F R_V.\]}
\item \miknew{If the $p$-interpolator between $W$ and $V$ is strong, there is a positive constant $c_F>0$ such that the Kac density $\rho_F$ satisfies
\[c_F R_V\leq \rho_\F.\]}
\end{itemize}
In particular, for every compact subset $K$ of $\,\U$,
\[\E\left[\#Z(\F,K)^p\right] <+\infty.\]
\end{theorem}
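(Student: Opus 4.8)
The plan is to factor the Kac--Rice density as $\rho_F=R_V\,\sigma_F$, where $R_V$ is a \emph{universal} singular function depending only on $V$ and $\sigma_F$ is bounded above and below on compacts, and then to get the integrability of $R_V$ for free from the Bézout-type property \ref{c} of Definition~\ref{def:interpolating}, applied to the standard Gaussian field valued in $V$. First I would check that Kac--Rice (Theorem~\ref{thm03}) applies to $F$: for distinct $y_1,\dots,y_p\in\U$ the interpolation identities of Definition~\ref{def:interpolator} give $\delta_\y F=\delta_\y\K^{0}_\y F$ with $\K^{0}_\y(W)=V_0$, and property \ref{a} gives $\delta_\y(V_0)=(\R^d)^p$; hence $\delta_\y(W)=(\R^d)^p$ and, since $W$ is the $\CC^q$-support of $F$, the Gaussian vector $(F(y_1),\dots,F(y_p))$ is non-degenerate. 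So
\[
\E\big[\#Z(F,K)^{[p]}\big]=\int_{K^p\setminus\Delta}\rho_F(\y)\,\dd\y ,
\]
and the same holds for the standard Gaussian field $G_V$ valued in $V$ (using \ref{a} directly), once a Euclidean structure on $V$ is fixed.

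\textbf{Reduction to $V$ and extraction of $R_V$.} Next I would observe that $\rho_F(\y)$ depends on $F$ only through the joint law of $(\delta_\y F,J_{y_1}F,\dots,J_{y_p}F)$, which by the identities $\delta_{y_k}F=\delta_{y_k}\K^{k}_\y F$ and $J_{y_k}F=J_{y_k}\K^{k}_\y F$ is a fixed continuous function of the covariance $\mathbf C(\y)$ of the \emph{finite-dimensional} Gaussian vector $\mathbf G(\y)=(\K^{0}_\y F,\dots,\K^{p}_\y F)\in V^{p+1}$; continuity of $\x\mapsto\K^{k}_\x$ makes $\y\mapsto\mathbf C(\y)$ continuous on all of $\U^p$, \emph{including} $\Delta$, and surjectivity of the $\K^{k}_\y$ together with non-degeneracy of $F$ keeps $\mathbf C(\y)$ in the admissible range, between fixed positive multiples of its analogue for $G_V$ on $\overline{K^p}$. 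Then, for $\y\notin\Delta$, I would run Gram--Schmidt — with respect to the fixed Euclidean structure — on the vector-valued evaluation forms $\delta_{y_1},\dots,\delta_{y_p}$ restricted to $V$ (possible by \ref{a}), getting an invertible block-upper-triangular $T(\y)$ and orthonormal forms $e(\y)$ with $\delta_\y=T(\y)e(\y)$, both $\CC^q$ on $\U^p\setminus\Delta$; substituting into $\rho_F$ and renormalising likewise the Jacobians $J_{y_k}$ on $V$ (licit by \ref{b}, which keeps them non-zero on $\Ker\delta_\y\cap V$) produces
\[
\rho_F(\y)=R_V(\y)\,\sigma_F(\y),
\]
where $R_V$ is an explicit function of $T(\y)$ alone — hence $\CC^q$, depending only on $V$, and carrying the whole blow-up as $\y\to\Delta$ — and $\sigma_F$ is a conditional moment of the renormalised Jacobians given vanishing of the orthonormalised forms, divided by the square root of the corresponding covariance determinant, built solely from $\mathbf C(\y)$. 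The same factorisation holds for $G_V$ with the \emph{same} $R_V$, giving $\rho_{G_V}=R_V\sigma_{G_V}$.

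\textbf{Bounding the residual and concluding.} The heart of the argument is then to show that $\sigma_F$ and $\sigma_{G_V}$ are bounded above and below on $\overline{K^p}$: the covariance of the renormalised Gaussian data extends continuously to $\Delta$ (its boundary values being jet-type evaluations at the collapsing point) and stays non-degenerate there — for the upper bound one uses \ref{a}, \ref{b} and continuity of $\mathbf C(\y)$, for the lower bound one additionally uses surjectivity of the $\K^{k}_\y$ (the ``strong'' hypothesis). This yields constants $0<c_F\le C_F$ with $c_FR_V\le\rho_F\le C_FR_V$ on $K^p\setminus\Delta$. Finally, property \ref{c} gives $\#Z(G_V,K)\le C_K$ almost surely, so $\E[\#Z(G_V,K)^{[p]}]\le C_K^{\,p}<\infty$ and Kac--Rice for $G_V$ gives $\int_{K^p\setminus\Delta}R_V\le(\inf_{\overline{K^p}}\sigma_{G_V})^{-1}\int_{K^p\setminus\Delta}\rho_{G_V}<\infty$; combined with $\rho_F\le C_FR_V$ this yields $\E[\#Z(F,K)^{[p]}]=\int_{K^p\setminus\Delta}\rho_F<\infty$, and since $\#Z(F,K)^p\le 2^p\#Z(F,K)^{[p]}+(2p)^p$ for this $\N$-valued random variable we conclude $\E[\#Z(F,K)^{p}]<\infty$.

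\textbf{The main obstacle.} The delicate point is the claim that the Gram--Schmidt factorisation genuinely isolates the diagonal singularity — that $\sigma_F$ extends continuously and non-degenerately across $\Delta$ although the orthonormalised forms $e_j(\y)$ individually do not. This is exactly where the approach avoids an explicit, configuration-by-configuration description of how $p$ points collapse: the finite-dimensionality of $V$ together with the axioms \ref{a}, \ref{b} and surjectivity handle all approaches to $\Delta$ uniformly, and the comparison is then closed against the single reference field $G_V$, for which the Bézout-type bound \ref{c} supplies integrability of the Kac density at no cost.
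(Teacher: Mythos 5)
Your proposal is correct and follows essentially the same route as the paper: Gram--Schmidt on the evaluation forms plus renormalisation of the Jacobians to factor $\rho_F=R_V\,\sigma_F$, comparison with a reference non-degenerate Gaussian field valued in $V$ whose zero count is bounded by property C to get integrability of $R_V$, and the strong-interpolator hypothesis for the lower bound. The only stylistic difference is that where you assert the renormalised covariance ``extends continuously'' across $\Delta$, the paper proves the needed two-sided bounds on $\sigma_F$ by a compactness/subsequence argument (its Lemmas on subsequential limits of the orthonormalised forms and normalised Jacobian polynomials), which is the rigorous form of the same mechanism and does not require the limit to be independent of the direction of collapse.
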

\nnjump
We will show in section \ref{sec:proof} that Theorem \ref{thm:general} implies in particular Theorem \ref{thm1} and Theorem \ref{thm2}. 
\section{Multivariate interpolation}
The problem of interpolating a function on a open interval $\U$ of $\R$ by polynomials is \textit{well-poised}: given a positive integer $p$, there exists a \textbf{unique} polynomial of degree $p-1$ that interpolates a function $f$ at $p$ distinct points. We then say that the space of polynomials $\Pt_{p-1}^1(\R)$ is a \textit{Haar space} on $\U$. The situation in higher dimension is much more intricate.
\nnjump
\begin{theorem}[Haar--Mairhuber--Curtis]
Let $\U$ be an open subset of $\R^d$. There is no Haar space on $\U$ as soon as $d>1$.
\end{theorem}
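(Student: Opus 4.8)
The plan is to argue by contradiction, turning the Haar interpolation condition into a statement about a continuous, nowhere-vanishing determinant, and then exhibiting a loop in configuration space along which that determinant is forced to change sign. Recall that an $n$-dimensional subspace $V\subset\CC^0(\U,\R)$ is a \emph{Haar space} if for any $n$ distinct points $y_1,\dots,y_n\in\U$ and any basis $(\phi_1,\dots,\phi_n)$ of $V$ the matrix $\bigl(\phi_j(y_i)\bigr)_{1\le i,j\le n}$ is invertible (equivalently, no nonzero element of $V$ vanishes at $n$ distinct points of $\U$). As constants form a Haar space of dimension $1$, the statement concerns $n\ge 2$; so I assume $\dim V=n\ge 2$, that $\U\subset\R^d$ is open with $d\ge 2$, and seek a contradiction. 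Fix a basis $\phi_1,\dots,\phi_n$ of $V$ and, on the open subset of $\U^n$ consisting of $n$-tuples with pairwise distinct entries, define
\[
D(y_1,\dots,y_n)=\det\bigl(\phi_j(y_i)\bigr)_{1\le i,j\le n}.
\]
This $D$ is continuous, since each $\phi_j$ is, and by the Haar property it never vanishes.

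The key step is the geometric input, and it is where $d\ge 2$ enters. Pick an open ball $B\subset\U$. Because $d\ge 2$, I can choose inside $B$ points $a_3,\dots,a_n$ together with a small circle $C\subset B$ (contained in a $2$-dimensional affine plane) that misses $a_3,\dots,a_n$, and then take $a_1,a_2$ to be two antipodal points of $C$. Rotating $C$ about its center through the angle $\pi$ produces a continuous path $\gamma\colon[0,1]\to B^n$, $\gamma(t)=\bigl(a_1(t),a_2(t),a_3,\dots,a_n\bigr)$, whose endpoints are $\gamma(0)=(a_1,a_2,a_3,\dots,a_n)$ and $\gamma(1)=(a_2,a_1,a_3,\dots,a_n)$, and along which $a_1(t)$ and $a_2(t)$ stay antipodal on $C$, hence distinct from each other and from $a_3,\dots,a_n$. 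Thus $\gamma$ stays in the domain of $D$ and realizes the transposition of the first two points. (More abstractly, one is just using that $(\R^d)^n$ minus its diagonal is path-connected for $d\ge 2$; this is exactly what fails on the line, where two points cannot be interchanged without colliding.)

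To finish, consider $t\mapsto D(\gamma(t))$. It is continuous on $[0,1]$ and nowhere zero, while interchanging the first two coordinates of $D$ swaps the first two rows of $\bigl(\phi_j(y_i)\bigr)$, so $D(\gamma(1))=-D(\gamma(0))$. A continuous nowhere-vanishing real function on an interval has constant sign, so $D(\gamma(0))=-D(\gamma(0))\ne 0$, which is absurd. Hence no Haar space of dimension $\ge 2$ can exist on $\U$ when $d>1$. The only non-routine ingredient is the construction of the swapping path $\gamma$, and even that is elementary; the rest is the sign of a determinant under a row transposition together with the intermediate value theorem.
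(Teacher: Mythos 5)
Your argument is correct: it is the classical Mairhuber--Curtis proof (continuously transposing two interpolation nodes along a collision-free loop, which forces the nowhere-vanishing interpolation determinant to change sign), and this is exactly the argument behind the reference the paper cites, since the paper itself gives no proof beyond pointing to \cite[Thm.~2.4.1]{Dav75}. Your explicit restriction to dimension $n\ge 2$ is a sensible and necessary precision, as one-dimensional Haar spaces (e.g.\ constants) do exist in any ambient dimension.
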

\begin{proof}
See \cite[Thm. 2.4.1]{Dav75}.
\end{proof}
The problem of interpolating a function at $p$ distinct points is then always \textit{ill-poised} as soon as the ambient dimension is greater than one. In order to get around this non-existence theorem, there are two possible solutions whose choice will depend on the underlying goal.
\begin{itemize}
\item One can add some additional constraints on the relative position of the points. It has the advantage of keeping a space of interpolating functions that has the same dimension as the number of points, thus preserving uniqueness and reducing computations costs. This is generally the path taken in numerical problems.
\item One can drop the uniqueness and consider a larger space of functions for which the the problem of interpolating $p$ distinct points always has a solution. If the dimension of the space is greater than $p$ then the problem is underdetermined an has more than one solution. To recover uniqueness, one can add additional constraints on the interpolating function, such as the class of mean-value interpolation schemes. This approach is more adapted to theoretical problems and is the one we take in this paper.
\end{itemize}
We refer the reader to \cite{Nie12} and references therein for a thorough discussion on multivariate interpolation and various related problems.
\subsection{Kergin interpolation}\label{sec:kergin}
Let $p$ be a positive integer and $\U$ be an open convex subset of $\R^d$. In \cite{KERGIN}, the author provides an elegant solution to the interpolation problem at $p$ points of a function in $\CC^{p-1}(\U)$ by $d$-variate polynomials of total degree lower than $p-1$.
\nnjump
\begin{theorem}[Kergin]
Let $\x = (x_1,\ldots,x_p)$ be a collection of $p$ points in $\U$, with possible repetition. There is a unique mapping \[\Pi_{\x} : \CC^{p-1}(\U)\longrightarrow \Pt_{p-1}^d(\R)\]
such that
\begin{itemize}
\item $\Pi_{\x}$ is linear
\item For each $f\in\CC^{p-1}(\U)$, each integer $1\leq k\leq p$, each homogeneous partial differential operator with constant cefficients $q(D)$ of order $k-1$ and each $J\subset \{1,\ldots,p\}$ of size $k$, there exists a point $z=z(f,q,J)$ in the convex hull of $(x_j)_{j\in J}$ such that
\[q(D)\Pi_{\x}f(z) = q(D)f(z).\]
\end{itemize}
\end{theorem}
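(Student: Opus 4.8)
The plan is to split the statement into existence and uniqueness: for existence I would write down an explicit formula for $\Pi_{\x}$ and verify its properties by hand, and for uniqueness I would first show that any operator with the stated properties reproduces polynomials of degree at most $p-1$ and then that the mean--value conditions are rigid enough to determine it. Fix an ordering $x_1,\dots,x_p$ of the nodes (with repetitions allowed). For $0\le j\le p-1$ write $\int_{[x_1,\dots,x_{j+1}]}g:=\int_{\sigma_j}g\bigl(\textstyle\sum_{i=0}^{j}t_i\,x_{i+1}\bigr)\,\dd t$ for the average of a continuous function $g$ over the simplex spanned by $x_1,\dots,x_{j+1}$, where $\sigma_j=\{t\in\R^{j+1}:t_i\ge 0,\ \sum_i t_i=1\}$ carries the uniform probability measure; convexity of $\U$ keeps these simplices inside $\U$. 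Set
\[
\Pi_{\x}f(x):=\sum_{j=0}^{p-1}\Bigl(\int_{[x_1,\dots,x_{j+1}]}D^{j}f\Bigr)\bigl[\,x-x_1,\;x-x_2,\;\dots,\;x-x_{j}\,\bigr],
\]
where $D^jf(z)$ is the $j$-th Fréchet derivative of $f$ at $z$, a symmetric $j$-linear form, first averaged over the simplex and then evaluated on the $j$ vectors $x-x_1,\dots,x-x_j$. Each summand is a polynomial of degree $\le j$ in $x$, so $\Pi_{\x}f\in\Pt_{p-1}^d(\R)$, and $f\mapsto\Pi_{\x}f$ is visibly linear; once uniqueness is established it will follow that this operator does not depend on the chosen ordering.

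The mean--value property is the heart of the existence part. The only elementary ingredient is the \emph{simplex mean--value theorem}: a continuous real function on a simplex attains its average at some point of the simplex, since that average lies between the minimum and the maximum and the simplex is connected. To use it one must show, for each $1\le k\le p$, each $J\subset\{1,\dots,p\}$ of size $k$, and each homogeneous constant--coefficient operator $q(D)$ of order $k-1$, that $q(D)\Pi_{\x}f$ unfolds, on the affine span of $(x_j)_{j\in J}$, as an average of $q(D)f$ over the simplex with vertices $(x_j)_{j\in J}$. I would prove this by induction on $p$, the base case $p=1$ being $\Pi_{x_1}f=f(x_1)$, by differentiating the formula above term by term and regrouping (using the symmetry of $D^jf$ and of the simplex integral) into a \emph{differentiation identity} that expresses each directional derivative $\partial_v\Pi_{\x}f$ through Kergin--type interpolants of $\partial_v f$ attached to sub--tuples of $\x$; iterating this identity lowers the order of $q(D)$ one step at a time until one reaches the order--zero statement, to which the simplex mean--value theorem applies, and this produces the required point $z=z(f,q,J)$ in the convex hull of $(x_j)_{j\in J}$. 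This bookkeeping --- pinning down precisely how $q(D)\Pi_{\x}f$ becomes a simplex average of $q(D)f$ --- is the step I expect to be the real obstacle, and it is the same identity that will drive the uniqueness induction below.

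For uniqueness, let $\Pi\colon\CC^{p-1}(\U)\to\Pt_{p-1}^d(\R)$ be any linear map with the mean--value property, and let $H$ be the convex hull of $x_1,\dots,x_p$. The following observation does double duty. If a polynomial $Q\in\Pt_{p-1}^d(\R)$ has degree $m$ and $q(D)Q$ has a zero in $H$ for every homogeneous operator $q(D)$ of order $m$ (equivalently, the constant $q(D)Q$ vanishes), then by non--degeneracy of the apolarity pairing between homogeneous polynomials and homogeneous constant--coefficient operators of the same degree the top part of $Q$ is zero; iterating downward forces $Q=0$. Applying this to $Q=\Pi P-P$ for $P\in\Pt_{p-1}^d(\R)$, using the mean--value property with $k=m+1$ and $J$ of size $m+1$, shows that $\Pi$ fixes $\Pt_{p-1}^d(\R)$ pointwise; applying it to $Q=\Pi f$ when all derivatives $D^jf$ ($0\le j\le p-1$) vanish on $H$ shows that $\Pi f$ depends on $f$ only through the jet of $f$ along $H$. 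So $\Pi$ is a projection of $\CC^{p-1}(\U)$ onto $\Pt_{p-1}^d(\R)$, and two such projections coincide iff they have the same kernel. The residual rigidity --- that the mean--value conditions fix this kernel --- I would obtain by induction on $p$, descending on the order of $q(D)$: differentiating the conditions along a direction $v$ turns them into Kergin--type conditions for $\partial_vf$ on subconfigurations of $\{x_1,\dots,x_p\}$, reducing the claim to the uniqueness statement for $p-1$ nodes, with base case $p=1$. Combining existence with uniqueness yields the theorem.
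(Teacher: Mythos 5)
This theorem is not proved in the paper: it is quoted from Kergin's original article \cite{KERGIN}, so there is no internal argument to compare against. Your existence half follows the standard route in the literature: the explicit formula you write down is exactly the Micchelli--Milman representation that the paper records separately as Theorem \ref{thm:micchelli}, and the reduction of the mean--value property to the simplex mean--value theorem is the right idea. But the step you yourself flag as ``the real obstacle'' is precisely where the content of the proof lives, and it is not mere bookkeeping. What one actually proves is the identity $\int_{[x_j:j\in J]}q(D)\Pi_{\x}f=\int_{[x_j:j\in J]}q(D)f$ for \emph{every} $k$-subset $J$ and every homogeneous $q(D)$ of order $k-1$; this rests on the Hermite--Genocchi/Stokes recurrence $\int_{[x_0,\ldots,x_m]}D_{x_m-x_{m-1}}g=\int_{[x_0,\ldots,x_{m-2},x_m]}g-\int_{[x_0,\ldots,x_{m-1}]}g$ on the simplex, not on a termwise differentiation of the formula. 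Moreover your formula depends on the ordering of the nodes while the conditions are symmetric in them, and you propose to recover symmetry only \emph{after} uniqueness; if your existence verification then only covers initial-segment subsets $J=\{1,\ldots,k\}$, the existence claim is not established for general $J$ and the argument becomes circular.

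The uniqueness half has a more serious gap. The mean--value conditions are existential (``there exists $z$ such that\dots''), so the proposed step of ``differentiating the conditions along a direction $v$'' to pass to $\partial_v f$ on subconfigurations is not an operation you can perform on such statements. Concretely, for $k=p$ and $q(D)$ of order $p-1$, the polynomial $q(D)\Pi_{\x}f$ is a constant $c$, and the condition only asserts $c=q(D)f(z)$ for \emph{some} $z$ in the hull, i.e.\ it confines $c$ to the interval $[\min_H q(D)f,\max_H q(D)f]$ without determining it. Hence the conditions attached to a single $f$ do not pin down $\Pi_{\x}f$, and uniqueness must exploit linearity of $\Pi$ across all of $\CC^{p-1}(\U)$ in an essential and rather delicate way --- this is generally regarded as the harder half of Kergin's theorem, and your reduction to ``two projections with the same kernel'' only restates the problem, since identifying the kernel is exactly what the existential conditions do not obviously do. Your preliminary observations (that $\Pi$ fixes $\Pt_{p-1}^d(\R)$ and factors through the $(p-1)$-jet of $f$ on the convex hull, both via the apolarity pairing) are correct and useful, but they do not close this gap.
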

For $x\in\U$, The mapping $\Pi_{\x}$ enjoys many desirable properties that are similar to the classical Hermite--Lagrange interpolation in the one-dimensional case.
\nnjump
\begin{theorem}[Kergin Interpolation]
\label{thm02}\strut\\[-10pt]
\begin{enumerate}
\item The mapping $\Pi_{\x}$ is a continuous projector with respect to the the usual topology on $\CC^{p-1}(\U)$ of compact convergence of the derivatives up to order $p-1$.
\item For each $f\in \CC^{p-1}(\U)$, the mapping $\x\rightarrow \Pi_{\x}f$ from $(\R^d)^p$ to $\R$ is continuous. 
\item If the multiplicity of $x_i$ in $\x$ is $n$ then
\[\forall \,|\alpha|< n,\quad \partial_\alpha f(x_i) = \partial_\alpha \Pi_{\x}f(x_i).\]
\end{enumerate}
\end{theorem}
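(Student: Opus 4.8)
The plan is to read off all three assertions from the defining mean-value characterization of $\Pi_{\x}$ in the preceding theorem, together with its uniqueness clause, supplemented by one classical analytic input, namely an explicit integral formula for the interpolant, for the two continuity statements. For the projector property: if $P\in\Pt_{p-1}^d(\R)$, then $P$ itself, regarded as a candidate interpolant for $P$, satisfies every mean-value condition trivially, since for each admissible $k$, $q(D)$ and $J$ the required point $z$ may be taken anywhere ($q(D)P(z)=q(D)P(z)$). By the uniqueness part of Kergin's theorem the Kergin interpolant of $P$ is $P$ itself, so $\Pi_{\x}$ restricted to $\Pt_{p-1}^d(\R)$ is the identity; in particular $\Pi_{\x}$ is surjective, and since $\Pi_{\x}f\in\Pt_{p-1}^d(\R)$ for every $f$ we get $\Pi_{\x}\circ\Pi_{\x}=\Pi_{\x}$, i.e. $\Pi_{\x}$ is a linear projection of $\CC^{p-1}(\U)$ onto $\Pt_{p-1}^d(\R)$.

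For the Hermite-type property (3), suppose $x_i$ has multiplicity $n$ in $\x$ and fix a multi-index $\alpha$ with $|\alpha|<n$. Set $k=|\alpha|+1$, so that $1\le k\le n\le p$; take $q(D)=\partial^{\alpha}$, which is homogeneous of order $k-1$; and let $J\subset\{1,\dots,p\}$ be any set of $k$ indices $j$ with $x_j=x_i$, which exists because the multiplicity is $n\ge k$. The convex hull of $(x_j)_{j\in J}$ is the single point $\{x_i\}$, so the point $z=z(f,q,J)$ given by the defining property is forced to be $x_i$, whence $\partial^{\alpha}\Pi_{\x}f(x_i)=\partial^{\alpha}f(x_i)$. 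Letting $\alpha$ range over all multi-indices with $|\alpha|<n$ gives exactly (3).

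For the continuity statements in (1) and (2) I would invoke the explicit integral representation of the Kergin interpolant (Micchelli--Milman; see \cite{KERGIN} and the references therein): $\Pi_{\x}f(x)$ is a finite sum, over $0\le k\le p-1$, of integrals over fixed standard simplices of the derivatives $D^kf$ evaluated at convex combinations of $x_1,\dots,x_p$, against integrands that are polynomial in $x$ and depend continuously on $\x$. Two consequences follow. First, for every compact $K\subset\U$ one has $\sup_{x\in K}|\Pi_{\x}f(x)|\le C_{K,\x}\sum_{k\le p-1}\sup_{y\in\mathrm{conv}(\x)}|D^kf(y)|$; since $\Pt_{p-1}^d(\R)$ is finite-dimensional, all norms on it are equivalent, so this bound shows that $\Pi_{\x}$ is continuous for the topology of compact convergence of derivatives up to order $p-1$. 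Second, fixing $f$ and letting $\x_m\to\x$, the integrands converge pointwise and are dominated uniformly on a fixed compact neighbourhood of $\mathrm{conv}(\x)$ on which the $D^kf$ are bounded, so dominated convergence yields $\Pi_{\x_m}f(x)\to\Pi_{\x}f(x)$ for every $x$; being polynomials of bounded degree, they then converge in the $\CC^{p-1}$ topology, which is (2).

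The only ingredient beyond the preceding theorem is the integral representation, and that is where the real work sits if one insists on a self-contained treatment. If one prefers to avoid it, both continuity assertions can be obtained from uniqueness alone by a closed-graph argument: if $f_m\to f$ in $\CC^{p-1}(\U)$ (resp. $\x_m\to\x$) and $\Pi_{\x}f_m\to g$ (resp. $\Pi_{\x_m}f\to g$) in $\Pt_{p-1}^d(\R)$, then, extracting convergent subsequences of the points $z$ inside the relevant compact convex hulls, one verifies that $g$ fulfils every mean-value condition attached to $f$ and deduces $g=\Pi_{\x}f$ from uniqueness; a companion normalization (rescale by $\|\Pi_{\x_m}f\|$ and use $\Pi_{\x}0=0$) rules out unboundedness and supplies the precompactness needed to run the argument. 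I expect this last step, controlling $\Pi_{\x}$ uniformly as the nodes move, to be the only genuinely delicate point.
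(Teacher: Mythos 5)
The paper does not prove this statement at all: it is quoted as a known result from Kergin's original article \cite{KERGIN} (and the continuity facts are implicitly covered by the Micchelli--Milman representation, Theorem \ref{thm:micchelli}, which is likewise only cited). Your proposal therefore goes further than the paper, and most of it is sound: the derivation of part (3) from the mean-value characterization (take $k=|\alpha|+1$, $q(D)=\partial^\alpha$, and $J$ a set of $k$ indices all equal to $x_i$, so that the convex hull degenerates to $\{x_i\}$ and the point $z$ is forced) is exactly the standard argument and is complete; and deducing both continuity statements from the integral representation, using equivalence of norms on the finite-dimensional space $\Pt_{p-1}^d(\R)$ and dominated convergence as $\x_m\to\x$, is the standard route and works.

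One step deserves a patch. In part (1) you conclude ``$P$ satisfies every mean-value condition for itself, hence by uniqueness $\Pi_{\x}P=P$.'' The uniqueness clause in Kergin's theorem is uniqueness of the \emph{linear mapping} satisfying the mean-value property for all $f$ simultaneously; it is not uniqueness of the interpolant of a single fixed $f$. Indeed, for a single $f$ the conditions generally admit many solutions (already for $p=2$, $d=2$: the condition for $k=2$ and $v\perp x_2-x_1$ only constrains $\partial_v P$ to lie in the interval $\partial_v f([x_1,x_2])$). So the one-line deduction is not literally licensed. It is easily repaired: define $\tilde\Pi$ to agree with $\Pi_{\x}$ on a linear complement of $\Pt_{p-1}^d(\R)$ and to be the identity on $\Pt_{p-1}^d(\R)$; for $f=P+w$ the mean-value conditions for $\tilde\Pi f=P+\Pi_{\x}w$ reduce to those for $\Pi_{\x}w$ against $w$, so $\tilde\Pi$ satisfies the characterization and mapping-level uniqueness gives $\tilde\Pi=\Pi_{\x}$, whence $\Pi_{\x}P=P$. (Alternatively, reproduction of polynomials follows from the Micchelli--Milman formula via the Hermite--Genocchi identity.) The same caveat applies to your alternative closed-graph argument, which again invokes per-function uniqueness; as written it would need the same repair.
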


In \cite{MICCHELLI}, the authors provide an explicit formula for the Kergin interpolant that generalizes the Hermite-Gennocchi (see \cite{MICCHELLI}) formula in the one-dimensional case. We define the standard simplex of dimension $p-1$ as  
\[\Sigma^{p-1} = \enstq{(v_1,\ldots,v_p)\in (\R_+)^p}{\sum_{i=1}^pv_i = 1},\]
and for any vector  of $p$ distinct points $\x=(x_1,\dots,x_p)\in (\R^d)^p,$ we define the linear functional $\int_{[\x]}$ such that
\[\int_{[\x]} f = \int_{\Sigma^p}f(v_1x_1+\ldots+v_px_p)\dd v_1\ldots\dd v_p.\]
For a positive integer $r$, and $x\in\U$ we denote by $D^r_xf$ the $r^{th}$ derivative of $f$ at point $x$, meant as a symmetric multilinear function $D^r_xf: \R^d\times \ldots \times \R^d\to \R$ defined by
\[
D^r_xf :(u_1,\ldots,u_r)\mapsto\sum_{a_1,\ldots,a_r=1}^d\partial_{a_1}\ldots\partial_{a_r}f(x)\,u_1^{a_1}\ldots  u_r^{a_r}.
\]
Here, $u^a$ denotes the $a^{th}$ component of the vector $u=(u^1,\ldots,u^d)^T\in \R^d.$ Notice that $D^r_xf (u_1,\ldots,u_r)$ is a polynomial of degree $r$ in the variables $u_1,\dots, u_r$.
\nnjump
\begin{theorem}\label{thm:micchelli}
For each $f\in \CC^{p-1}(\U)$, $\x\in\U^p$ and $z\in \U$,
\[
\left(\Pi_{\x} f\right)(z)=\sum_{r=0}^{p-1} \int_{[(x_1,\ldots,x_{r+1})]}  D_v^{r}f \left((z-x_1),\ldots ,(z-x_{r})\right) dv,
\]
\end{theorem}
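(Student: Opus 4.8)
The plan is to show that the operator defined by the right-hand side, which I will denote
\[
M_{\x}f(z) := \sum_{r=0}^{p-1}\int_{[(x_1,\ldots,x_{r+1})]} D_v^{r}f\bigl((z-x_1),\ldots,(z-x_r)\bigr)\,dv,
\]
coincides with $\Pi_{\x}$. First I would record the easy structural facts: $M_{\x}$ is linear in $f$, and for fixed $v$ the expression $D_v^r f(z-x_1,\ldots,z-x_r)$ is multilinear in $r$ arguments each affine in $z$, hence a polynomial of degree $r$ in $z$; integrating over the compact simplex preserves this, so $M_{\x}f\in\Pt_{p-1}^d(\R)$. Differentiation under the integral sign (the simplex being compact and $f$ being of class $\CC^{p-1}$) also shows that $M_{\x}$ is continuous for the topology of compact convergence of derivatives up to order $p-1$; the same continuity holds for $\Pi_{\x}$ by Theorem \ref{thm02}. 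Since the two operators are continuous and linear, it suffices to prove $M_{\x}f=\Pi_{\x}f$ on a family whose linear span is dense in $\CC^{p-1}(\U)$, and for this I would use the \emph{ridge} (plane-wave) functions $f(x)=g(\langle a,x\rangle)$ with $a\in\R^d$ and $g\in\CC^{p-1}(\R)$; concretely the exponentials $e^{\langle a,\cdot\rangle}$ already span a dense subspace of $\CC^{p-1}(\U)$ in this topology (a standard fact, via mollification and a Fourier/Riemann-sum approximation).

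The core of the proof is the computation on ridge functions. For $f(x)=g(\langle a,x\rangle)$ one has $D_v^r f(w_1,\ldots,w_r)=g^{(r)}(\langle a,v\rangle)\prod_{j=1}^r\langle a,w_j\rangle$, so writing $s:=\langle a,z\rangle$ and $t_j:=\langle a,x_j\rangle$ and substituting into $M_{\x}f$, the multivariate simplex integral collapses to the univariate integral
\[
\int_{[(x_1,\ldots,x_{r+1})]} g^{(r)}(\langle a,v\rangle)\,dv = \int_{\Sigma^{r}} g^{(r)}\bigl(u_1 t_1+\cdots+u_{r+1}t_{r+1}\bigr)\,du = g[t_1,\ldots,t_{r+1}],
\]
the classical Hermite--Genocchi formula for the univariate divided difference. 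Hence $M_{\x}f(z)=\sum_{r=0}^{p-1} g[t_1,\ldots,t_{r+1}]\prod_{j=1}^{r}(s-t_j)$, which is precisely the Newton form of the univariate Hermite interpolant $H_{\underline t}g$ of $g$ at the nodes $t_1,\ldots,t_p$, evaluated at $s$; that is, $M_{\x}f(z)=\bigl(H_{\underline t}g\bigr)(\langle a,z\rangle)$.

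It then remains to identify the Kergin interpolant of a ridge function with the same ridge polynomial, namely $\Pi_{\x}f(z)=(H_{\underline t}g)(\langle a,z\rangle)$, which I would obtain from the uniqueness in Kergin's theorem. The candidate $P(z):=(H_{\underline t}g)(\langle a,z\rangle)$ has degree $\le p-1$, so I only need to check the mean-value conditions. Using that for any $\phi$ and any homogeneous constant-coefficient operator $q(D)$ of order $k-1$ one has $q(D)\bigl[\phi(\langle a,\cdot\rangle)\bigr]=q(a)\,\phi^{(k-1)}(\langle a,\cdot\rangle)$ (with $q(a)$ the symbol), the condition for $f$ and $P$ reduces, when $q(a)\neq 0$, to finding $z^\ast\in\mathrm{conv}(x_j)_{j\in J}$ with $g^{(k-1)}(\langle a,z^\ast\rangle)=(H_{\underline t}g)^{(k-1)}(\langle a,z^\ast\rangle)$. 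As $z$ runs over $\mathrm{conv}(x_j)_{j\in J}$, the quantity $\langle a,z\rangle$ runs over the whole interval $\mathrm{conv}(t_j)_{j\in J}$, and since $g-H_{\underline t}g$ vanishes at the $|J|=k$ nodes $(t_j)_{j\in J}$ counted with their Hermite multiplicities, iterated Rolle produces a zero of $(g-H_{\underline t}g)^{(k-1)}$ in that interval; pulling it back gives the required $z^\ast$. By uniqueness $P=\Pi_{\x}f$, so $M_{\x}$ and $\Pi_{\x}$ agree on all ridge functions, and the density argument upgrades this to equality on $\CC^{p-1}(\U)$.

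The step I expect to be the main obstacle is the density/continuity reduction: one must check carefully that the ridge (exponential) functions are genuinely dense in $\CC^{p-1}(\U)$ for the compact-$\CC^{p-1}$ topology and that both $M_{\x}$ and $\Pi_{\x}$ are continuous there, so that the identity transfers from ridges to arbitrary $\CC^{p-1}$ functions; a minor technical point inside the core computation is the iterated-Rolle bookkeeping for a node subset $J$ carrying Hermite multiplicities. Everything else is the reduction to the one-dimensional Hermite--Genocchi formula.
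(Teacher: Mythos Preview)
The paper does not prove this statement itself but simply cites \cite{MICCHELLI}; your approach---reducing to ridge functions via the univariate Hermite--Genocchi formula and then extending by density---is essentially the standard one from that reference and is correct.

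One point deserves care: when you invoke ``the uniqueness in Kergin's theorem'' to conclude $\Pi_{\x}f = P$ for a fixed ridge function $f$, the theorem as stated asserts uniqueness of the linear \emph{map} $\Pi_{\x}$, not of a degree-$(p-1)$ polynomial satisfying the mean-value conditions for an individual $f$. Pointwise uniqueness does in fact hold (if $R\in\Pt_{p-1}^d$ satisfies all the conditions for $f=0$, the $k=p$ conditions force $q(D)R\equiv 0$ for every homogeneous $q(D)$ of order $p-1$, so the top-degree part of $R$ vanishes, and induction on $p$ finishes), but you should either state and prove this, or---more cleanly---reorganize the argument: your computation already shows that $M_{\x}f$ satisfies Kergin's mean-value conditions for every ridge $f$; since those conditions pass to $\CC^{p-1}$-limits (by compactness of the relevant convex hulls) and $M_{\x}$ is continuous, they hold for all $f\in\CC^{p-1}(\U)$, and then the map-uniqueness in Kergin's theorem gives $M_{\x}=\Pi_{\x}$ directly, without ever identifying $\Pi_{\x}f$ on ridges separately. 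The density of exponentials and the iterated-Rolle bookkeeping you flagged are routine (for density, note that every monomial is a linear combination of powers of linear forms by polarization, and polynomials are dense in $\CC^{p-1}(\U)$).
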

\begin{proof}
See \cite{MICCHELLI}.
\end{proof}
For $k\in\{1,\ldots p\}$ we can consider the projector $\Pi_{\x}^k :=\Pi_{(\x,x_k)}$ 
\[\Pi_{\x}^k : \CC^{p}(\U)\longrightarrow \Pt_p^d(\R),\]
associated with the collection of points $(x_1,\ldots,x_p,x_k).$ Then for all $k,l\in \{1,\ldots,p\}$ one has
\[\Pi_{\x}^kf(x_l) = f(x_l)\quand \nabla \Pi_{\x}^kf(x_k) = \nabla f(x_k),\]
from of point $3$ of Theorem \ref{thm02}. By applying the Kergin interpolation on each component, we can define Kergin interpolation for vector fields
\be \label{eq:dKerg}
(\Pi^k_{\x})^{\otimes d}:\CC^p(\U,\R^d)\longrightarrow (\Pt^d_p(\R))^d, \quad 
(\Pi^k_{\x})^{\otimes d}\begin{pmatrix}f^1 \\ \vdots \\f^d \end{pmatrix}=\begin{pmatrix}\Pi^k_{\x}f^1 \\\vdots \\ \Pi^k_{\x}f^d\end{pmatrix}.
\ee
By identifying $\C^d$ with $\R^{2d}$ we also define in a similar fashion the Kergin projectors $\Pi_{\x}^k$ and $(\Pi_{\x}^k)^{\otimes d}$ for $\x\in\C^d$, that act on holomorphic functions and holomorphic vector fields, respectively. Now  let $W$ be a subspace of $\CC^{p+1}(\U)$. We define
\[\nabla W = \enstq{F\in \CC^p(\U,\R^d)}{\exists f\;\;\text{s.t}\;\;\nabla f = F},\]
the space of gradient fields of class $\CC^p$. By the Poincar\'{e}'s Theorem, this is equivalent to say that elements in $W$ satisfy the usual Schwarz identities for gradient fields, see \cite{lee2003introduction}. We still denote by $\nabla$ the complex gradient and we define $\nabla W$ similarly when $W$ is a subset of the space of holomorphic functions on an open subset $\U$ of $\C^d$.
\nnjump
\begin{lemma}\label{lem:gradientKergin}
Let $\U$ be an open convex subset of $\R^d$. For $\x\in \U^p$ and $1\leq k\leq p$, the operator $(\Pi^k_{\x})^{\otimes d}$ is a projector from the space $\nabla \CC^{p+1}(\U)$ onto the polynomial space $\nabla\Pt_{p+1}^d(\R)$.
\end{lemma}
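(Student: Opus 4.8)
The plan is to show two things: first, that $(\Pi^k_{\x})^{\otimes d}$ maps $\nabla \CC^{p+1}(\U)$ into $\nabla\Pt_{p+1}^d(\R)$, i.e. that the Kergin interpolant of a gradient field is again a gradient field; and second, that the restriction of $(\Pi^k_{\x})^{\otimes d}$ to $\nabla\Pt_{p+1}^d(\R)$ is the identity. The second point is immediate: if $G=\nabla g$ with $g$ a polynomial of degree $\le p+1$, then each component $G^i=\partial_i g$ is a polynomial of degree $\le p$, hence lies in $\Pt^d_p(\R)$, and since $\Pi^k_{\x}$ is a projector onto $\Pt^d_p(\R)$ (Theorem \ref{thm02}, applied to the collection $(\x,x_k)$ of $p+1$ points) it fixes each $G^i$. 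Thus $(\Pi^k_{\x})^{\otimes d}G=G$, and combined with the first point this shows $(\Pi^k_{\x})^{\otimes d}$ is a projector onto $\nabla\Pt_{p+1}^d(\R)$.

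The heart of the matter is therefore the first claim. Here I would use the explicit Micchelli formula, Theorem \ref{thm:micchelli}, but applied to the $(p+1)$-point collection $\y=(x_1,\dots,x_p,x_k)$ that defines $\Pi^k_{\x}=\Pi_{\y}$. Write $G=\nabla g$ with $g\in\CC^{p+2}(\U)$. The idea is to check directly that the vector-valued polynomial $(\Pi_{\y}G^1,\dots,\Pi_{\y}G^d)$ satisfies the Schwarz (closedness) relations $\partial_i(\Pi_{\y}G^j)=\partial_j(\Pi_{\y}G^i)$ for all $i,j$, after which Poincar\'e's theorem (invoked in the excerpt) yields that it is a gradient of a polynomial, necessarily of degree $\le p+1$. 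To verify the Schwarz relations one differentiates the Micchelli formula termwise. Using $G^i=\partial_i g$ and that $D^r_v(\partial_i g)$ is an $i$-directional derivative of $D^r_v g$, each term of $\Pi_{\y}G^i$ evaluated at $z$ is, up to the simplex integration $\int_{[(x_1,\dots,x_{r+1})]}$, of the form $D^{r+1}_v g$ paired with $(z-x_1,\dots,z-x_r)$ in $r$ slots and with $e_i$ (the $i$-th basis vector) in the remaining slot. Differentiating in $z_j$ brings down either another factor $D^{r+2}_v g(\dots,e_j)$ from the explicit $(z-x_m)$ factors, or acts on the last slot; because $D^{r+2}_v g$ is a symmetric tensor, all resulting expressions are manifestly symmetric under $i\leftrightarrow j$, so $\partial_j(\Pi_{\y}G^i)=\partial_i(\Pi_{\y}G^j)$.

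An alternative, cleaner route I would consider is the integral/averaging characterization: Kergin interpolation commutes with affine reparametrizations and with directional differentiation in a precise sense, and one has the identity $\partial_u(\Pi_{\y}f)=\Pi_{\y'}(\partial_u f)$ relating the interpolant of a derivative to a derivative of the interpolant on a sub-collection $\y'$ of points (this is part of the standard Kergin toolbox, see \cite{MICCHELLI}). Granting such a commutation relation, $\partial_i(\Pi_{\y}\partial_j g)$ and $\partial_j(\Pi_{\y}\partial_i g)$ both reduce to $\Pi$ applied to $\partial_i\partial_j g$ on the same sub-collection, hence are equal, and closedness follows at once. The main obstacle is pinning down exactly this commutation-with-differentiation property in the form needed here (keeping careful track of which points survive and the convex-hull/mean-value structure); once that algebraic bookkeeping is in place, Poincar\'e's lemma for polynomials does the rest and the projector property is automatic from Theorem \ref{thm02}.
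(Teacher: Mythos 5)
Your proposal is correct and follows essentially the same route as the paper: reduce to verifying the Schwarz relations $\partial_j(\Pi_{\y}\partial_i g)=\partial_i(\Pi_{\y}\partial_j g)$, differentiate Micchelli's explicit formula termwise, and conclude from the symmetry of the higher derivative tensor, with Poincar\'e's lemma and the projector property of Kergin interpolation finishing the argument. The paper's proof is exactly your first route (your second, via a commutation identity for Kergin and differentiation, is left as a sketch and is not what the paper does).
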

\begin{proof}
Since, the Kergin interpolation is a projector onto the space of polynomials, it is enough to show that the image of $\nabla \CC^{p+1}(\U)$ by the projector $(\Pi^k_{\x})^{\otimes d}$ in contained in $\CC^{p+1}(\U)$. By the Poincar\'{e}'s theorem, it suffices to show that for a function $f$ in $\CC^{p+1}(\R^d)$, a collection $\x=(x_0,\ldots,x_p)$ of $p+1$ points in $\U$ and $i,j\in\{1,\ldots,d\},$ one has
\be\label{eq:irrotational}
\de_j\tyu\Pi_{\x}\de_if\uyt=\de_i\tyu\Pi_{\x}\de_jf\uyt.
\ee
To prove Equation \eqref{eq:irrotational}, we will use the explicit formula of $\Pi_{\x},$ denoting the simplex $[x_0,\ldots,x_{r}]$ as $[\x]_r$.  For any $z=(z^1,\ldots,z^d)^T\in \R^d,$ we have
\bega
\de_j &\tyu\Pi_{\x}\de_if\uyt (z)
=
\de_j\sum_{r=0}^{p} \int_{[\x]_r}\sum_{a_1,\ldots,a_r=1}^d\partial_{a_1}\ldots\partial_{a_r}\partial_i f(v)(z^{a_1}-x_1^{a_1})\ldots  (z^{a_r}-x_r^{a_r}) dv
\\
&=
\sum_{r=0}^{p} \int_{[\x]_r}\sum_{a_1,\ldots,a_r=1}^d\partial_{a_1}\ldots\partial_{a_r}\partial_if(v)\sum_{\ell=1}^r\delta(j,a_\ell)\frac{(z^{a_1}-x_1^{a_1})\ldots  (z^{a_r}-x_r^{a_r})}{(z^{j}-x_j^{j})} dv
\\
&=\sum_{\ell=1}^rS_\ell(i,j,z),
\eega
where $\delta(j,a)=0$ if $j\neq a$ and $\delta(j,j)=1.$ Each term $S_\ell(i,j,z)$ in the latter expression is symmetric in $i,j$. We show this for $\ell=1,$ since the other terms are analogous.
\bega
S_1(i,j,z)&=\sum_{r=0}^{p} \int_{[\x]_r}\sum_{a_1,\ldots,a_r=1}^d\partial_{a_1}\ldots\partial_{a_r}\partial_if(v)\delta(j,a_1)\frac{(z^{a_1}-x_1^{a_1})\ldots  (z^{a_r}-x_r^{a_r})}{(z^{j}-x_j^{j})} dv
\\
&=
\sum_{r=0}^{p} \int_{[\x]_r}\sum_{a_2,\ldots,a_r=1}^d\partial_j\partial_{a_2}\ldots\partial_{a_r}\partial_if(x)(z^{a_2}-x_2^{a_2})\ldots  (z^{a_r}-x_r^{a_r}) dv
\\
&=S_1(j,i,z)).
\eega
\end{proof}
\begin{lemma}\label{lem:holomorphicKergin}
$\U$ be an open convex subset of $\C^d$. For $\x\in \U^p$ and $1\leq k\leq p$, the operator $(\Pi^k_{\x})^{\otimes d}$ is a projector from the space of holomorphic functions on $\U$ onto the polynomial space $\Pt_p^d(\C)$.
\end{lemma}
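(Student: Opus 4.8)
The plan is to imitate the proof of Lemma \ref{lem:gradientKergin}. Since Kergin interpolation is a projector onto polynomials (Theorem \ref{thm02}), it is enough to check that $\Pi_{\x}^k$ sends a holomorphic function to a holomorphic polynomial: its image then lies in $\Pt_p^d(\C)$, and since $\Pt_p^d(\C)$ is a subspace of $\Pt_p^{2d}(\R)$ — a complex polynomial of degree $\le p$ in $z_1,\dots,z_d$ is a real polynomial of degree $\le p$ in the $2d$ real coordinates — on which $\Pi_{\x}^k$ acts as the identity (being a projector onto $\Pt_p^{2d}(\R)$), one concludes that $\Pi_{\x}^k$ is a projector from the space of holomorphic functions onto $\Pt_p^d(\C)$. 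A holomorphic function is $\CC^\infty$ and $\U$ is convex, so $\Pi_{\x}^k=\Pi_{(\x,x_k)}$ is well defined on it; we extend it to complex-valued functions by $\C$-linearity. It suffices to treat $\Pi_{\x}$ on a scalar holomorphic function, the statement for $(\Pi_{\x}^k)^{\otimes d}$ following coordinate by coordinate with image in $(\Pt_p^d(\C))^d$.

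The heart of the matter is thus: if $f$ is holomorphic on $\U$, then $\Pi_{\x}f$ is a holomorphic polynomial. First I would observe that for holomorphic $f$ the $r$-th (real) derivative $D_v^r f$, a priori only a symmetric $\R$-multilinear form on $(\R^{2d})^r$, is in fact $\C$-multilinear: restricting $f$ to complex affine lines and applying the complex chain rule (equivalently, invoking the Cauchy--Riemann equations) gives
\[
D_v^r f(w_1,\dots,w_r)=\sum_{a_1,\dots,a_r=1}^d \partial_{z_{a_1}}\!\cdots\partial_{z_{a_r}}f(v)\; w_1^{a_1}\cdots w_r^{a_r},
\]
where $w_i^{a}$ is the $a$-th complex coordinate of $w_i$ and $\partial_{z_a}$ the complex partial derivatives. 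In particular, for each fixed $v$, the map $z\mapsto D_v^r f\big((z-x_1),\dots,(z-x_r)\big)$ is a holomorphic polynomial in $z$ of degree at most $r$.

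I would then insert this into Micchelli's explicit formula (Theorem \ref{thm:micchelli}),
\[
(\Pi_{\x}f)(z)=\sum_{r=0}^{p-1}\int_{[(x_1,\dots,x_{r+1})]} D_v^r f\big((z-x_1),\dots,(z-x_r)\big)\,dv .
\]
For each $r$ the domain of integration is the standard simplex, contained in the convex hull of the points $x_1,\dots,x_{r+1}$ (which lies in $\U$), and it does not depend on $z$; hence integrating the holomorphic polynomial $z\mapsto D_v^r f((z-x_1),\dots,(z-x_r))$ against $dv$ yields again a holomorphic polynomial in $z$, of degree $\le r$. Summing over $0\le r\le p-1$ shows $\Pi_{\x}f\in\Pt_{p-1}^d(\C)$; applying this to the $p+1$ points $(\x,x_k)$ gives $\Pi_{\x}^k f=\Pi_{(\x,x_k)}f\in\Pt_p^d(\C)$, which together with the projector remark of the first paragraph finishes the proof.

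The only delicate point — the one I would state with care — is the passage between the real structure, in which Kergin interpolation and Micchelli's formula are written, and the complex structure: namely that the Cauchy--Riemann equations force $D_v^r f$ to be $\C$-multilinear, and that this $\C$-multilinearity survives both the substitution $w_i=z-x_i$ and the integration over the simplex. Once this is granted the rest is bookkeeping. Alternatively one could bypass Micchelli's formula and differentiate $\Pi_{\x}f$ directly, checking $\bar\partial_j(\Pi_{\x}f)=0$ for $j=1,\dots,d$ with the Wirtinger operators $\bar\partial_j=\tfrac{1}{2}(\partial_{x_j}+i\partial_{y_j})$, exactly in the style of the computation carried out in Lemma \ref{lem:gradientKergin}.
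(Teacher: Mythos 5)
Your proposal is correct and follows the same route as the paper: the paper's own (one-line) proof simply observes that for holomorphic $f$ the integrand in Micchelli's formula (Theorem \ref{thm:micchelli}) is a complex polynomial in the free variable, which is exactly the point you establish via the $\C$-multilinearity of $D_v^rf$ forced by the Cauchy--Riemann equations. Your write-up merely fills in the details (including the degree count and the remark that $\Pt_p^d(\C)\subset\Pt_p^{2d}(\R)$ is fixed pointwise, giving the projector property) that the paper leaves implicit.
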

\begin{proof}
The proof is similar to Lemma \ref{lem:gradientKergin}, but simpler. It is enough to observe that if $f$ is holomorphic, then the integrand in Theorem \ref{thm:micchelli} is a complex polynomial in $x.$ See also \cite{AnderssonPassare}.
\end{proof}
\subsection{Examples of interpolating spaces}
\subsubsection{Bezout's theorem}
\label{sec.bezout}
 We first recall the Bezout's theorem concerning the number of common roots of a system of polynomial equations. This is a central point in our proof of the finiteness of the moments number of zeros. The space of polynomials fields is sufficiently large in order to interpolate a smooth function, while admitting an a.s. universal bound for number of zeros of its elements.\nnjump
\begin{theorem}[Bezout]
\label{thm:bezout}
Let $P= (P_1,\ldots,P_d) \in (\Pt_p^d(\C))^d$ be a collection of $d$ polynomials such that $0$ is a regular value for $P$. Then
\[\#Z(P,\C^d)\leq \prod_{i=1}^d\deg (P_i).\]
\end{theorem}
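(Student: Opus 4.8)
The statement combines two assertions: that $Z(P,\C^d)$ is finite, and the quantitative bound $\#Z(P,\C^d)\le \prod_i \deg P_i$. The plan is to deduce both from the classical \emph{projective} Bézout theorem, taken as known in the form: if $H_1,\dots,H_d\subset\mathbb{P}^d$ are hypersurfaces of degrees $e_1,\dots,e_d$ with $H_1\cap\dots\cap H_d$ finite, then this set has at most $\prod_i e_i$ points (in fact exactly $\prod_i e_i$ counted with intersection multiplicities, each of which is $\ge 1$). The finiteness of $Z(P,\C^d)$ is the easy half: since $0$ is a regular value, at each zero $x$ the matrix $\nabla P(x)$ is invertible, so by the inverse function theorem for holomorphic maps $P$ is biholomorphic near $x$ and $x$ is isolated in $Z(P,\C^d)$; a complex algebraic subset of $\C^d$ all of whose points are isolated has only zero-dimensional irreducible components, of which there are finitely many, hence is a finite set. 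So only the bound remains.

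Write $e_i=\deg P_i$. We may assume each $P_i$ is non-constant, for if some $P_i$ is constant then $Z(P,\C^d)=\varnothing$ (immediate when $P_i\not\equiv 0$, and when $P_i\equiv 0$ a zero of $P$ would be a critical point, contradicting that $0$ is a regular value), making the bound trivial. Let $\widehat{P}_i\in\C[x_0,\dots,x_d]$ be the degree-$e_i$ homogenization of $P_i$ and $\widehat H_i\subset\mathbb{P}^d$ its zero hypersurface, so that in the affine chart $\{x_0\ne 0\}\cong\C^d$ one has $(\widehat H_1\cap\dots\cap\widehat H_d)\cap\{x_0\ne 0\}=Z(P,\C^d)$. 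If this projective intersection were finite we would be done at once; the only real obstacle is that it may carry spurious positive-dimensional components inside the hyperplane at infinity $\{x_0=0\}$, so the classical theorem does not apply directly.

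To get around this I would use a generic perturbation. Fix homogeneous polynomials $G_1,\dots,G_d$ with $\deg G_i=e_i$ and general coefficients, and set $P_i^\varepsilon:=P_i+\varepsilon G_i$. For all but finitely many $\varepsilon$ with $|\varepsilon|$ small, the following hold simultaneously. (i) The top-degree part of $P_i^\varepsilon$ is a general form of degree $e_i$, so the $\widehat{P_i^\varepsilon}$ restricted to $\{x_0=0\}$ are $d$ general forms in $d$ variables, which have no common nonzero zero; hence $\widehat H_1^\varepsilon\cap\dots\cap\widehat H_d^\varepsilon$ avoids $\{x_0=0\}$, and since a projective variety of positive dimension meets every hyperplane, this intersection is finite. (ii) $0$ is a regular value of $P^\varepsilon$ (the failure locus is a proper algebraic subset — a discriminant — of the $\varepsilon$-line, proper because the property holds for generic $G_i$). (iii) Since $0$ is a regular value of $P$, the holomorphic inverse function theorem shows that for $|\varepsilon|$ small enough each point of $Z(P,\C^d)$ has a small ball around it containing exactly one zero of $P^\varepsilon$, so $\#Z(P^\varepsilon,\C^d)\ge\#Z(P,\C^d)$. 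Projective Bézout applied to $\widehat H_1^\varepsilon,\dots,\widehat H_d^\varepsilon$ then gives $\#Z(P^\varepsilon,\C^d)\le\#(\widehat H_1^\varepsilon\cap\dots\cap\widehat H_d^\varepsilon)\le\prod_i e_i$, and combining with (iii) yields $\#Z(P,\C^d)\le\prod_i e_i$, as required.

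The main obstacle is thus entirely the behavior at infinity: the affine content of the statement is a formal consequence of projective Bézout, and the real work lies in making that theorem applicable. An alternative to the perturbation step — which I would mention but not expand — is to invoke a refined Bézout \emph{inequality} requiring no properness hypothesis, e.g.\ $\sum_{Y}\deg Y\le\prod_i e_i$ over the irreducible components $Y$ of $\widehat H_1\cap\dots\cap\widehat H_d$ (as in Fulton's intersection theory); the bound follows immediately, since each affine solution is a zero-dimensional component of degree $1$.
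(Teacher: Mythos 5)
Your proof is correct in substance, but note that the paper does not actually prove this statement at all: its ``proof'' is a one-line citation to \cite[Lemma 11.5.1]{BCR}, so the theorem is treated as a black box. Your argument is a genuine, essentially self-contained derivation from the projective B\'ezout theorem: finiteness of $Z(P,\C^d)$ from the holomorphic inverse function theorem plus the fact that an affine algebraic set has finitely many irreducible components, and then the quantitative bound by homogenizing, killing the spurious components at infinity with a generic perturbation $P_i+\varepsilon G_i$, and transporting the count back via persistence of nondegenerate zeros. The skeleton is sound; two small remarks. First, your step (ii) is superfluous: once the projective intersection is finite, B\'ezout bounds its cardinality by $\prod_i e_i$ regardless of whether $0$ is a regular value of $P^\varepsilon$, since each point carries multiplicity at least $1$ --- and this is just as well, because the claim that the failure of regularity cuts out a \emph{closed} proper subset of the $\varepsilon$-line would need an argument (it is a priori only constructible, being an image under projection). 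Second, the phrase ``the top-degree parts are $d$ general forms'' is loose: they lie on a line through the fixed point $(P_i^{(e_i)})_i$ in the generic direction $(G_i)_i$ in the space of $d$-tuples of forms, and what you actually need is that this line is not contained in the resultant hypersurface; this follows from the multihomogeneity of the resultant (the coefficient of the top power of $\varepsilon$ is $\mathrm{Res}(G_1,\dots,G_d)\neq 0$ for generic $G$), so the conclusion stands. Compared with simply citing the literature as the paper does, your route makes explicit exactly which classical inputs are needed (projective B\'ezout with multiplicities, nonvanishing of the multivariate resultant, and a several-variables Rouch\'e argument); the alternative you mention at the end --- Fulton's refined B\'ezout inequality $\sum_Y \deg Y\le\prod_i e_i$ with no properness hypothesis --- would shortcut the perturbation entirely and is closer in spirit to a one-line citation.
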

\begin{proof}
See \cite[Lemma 11.5.1]{BCR}.
\end{proof}
\begin{corollary}
\label{cor:bezout}
Let $V$ be either the space $(\Pt_p^d(\R))^d, (\Pt_p^d(\C))^d, \nabla \Pt_{p+1}^d(\R)$ or $\nabla \Pt_{p+1}^d(\C)$. Then for almost every polynomial $P$ vector field in $V$, 
\[\#Z(P,\C^d)\leq p^d.\]
\end{corollary}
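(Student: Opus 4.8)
The plan is to reduce the statement to Bezout's Theorem (Theorem \ref{thm:bezout}) by showing that, for a generic element $P$ of $V$, the value $0$ is a regular value of $P$, so that the hypothesis of Theorem \ref{thm:bezout} is met and yields $\#Z(P,\C^d)\le\prod_i\deg P_i$. Since every polynomial occurring in the four listed spaces has degree at most $p$ (for $(\Pt_p^d)^d$ this is immediate; for $\nabla\Pt_{p+1}^d$ the gradient of a degree-$(p+1)$ polynomial has degree $p$), this product is at most $p^d$, which is the claimed bound. So the whole content is the genericity of the regular-value condition.

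First I would set up the parametrization: each of the four spaces $V$ is a finite-dimensional vector space (over $\R$ or $\C$) of polynomial vector fields $P\colon\C^d\to\C^d$ (in the real cases we complexify the coefficients, which does not change the zero set over $\C^d$ nor the degrees), and I would regard $V$ as an affine space of parameters. I would then consider the incidence variety $Z=\{(P,z)\in V\times\C^d: P(z)=0\}$ and the evaluation map $\mathrm{ev}\colon V\times\C^d\to\C^d$, $(P,z)\mapsto P(z)$. The key algebraic fact is that $0$ is a regular value of $\mathrm{ev}$: given $(P,z)$ with $P(z)=0$, one can already perturb $P$ alone — e.g. adding the constant vector field $c\in\C^d$ changes $P(z)$ by $c$ (note the constant field $z\mapsto c$ lies in all four spaces, being $\nabla(\langle c,\cdot\rangle)$ in the gradient cases) — so the partial differential of $\mathrm{ev}$ in the $V$-direction is already surjective. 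Hence $Z$ is a smooth submanifold, and the parametric transversality theorem (Thom/Sard, in the algebraic or smooth category) gives that for $P$ outside a measure-zero (indeed, a proper Zariski-closed, hence null) subset of $V$, the map $z\mapsto P(z)$ is transverse to $\{0\}$, i.e. $0$ is a regular value of $P$. For such $P$, Theorem \ref{thm:bezout} applies.

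Then I would simply conclude: for $P$ in the complement of this negligible set $N_V\subset V$,
\[
\#Z(P,\C^d)\le\prod_{i=1}^d\deg(P_i)\le p^d,
\]
using the degree bound noted above in each of the four cases. The main obstacle — really the only point requiring care — is handling the two gradient spaces $\nabla\Pt_{p+1}^d$: one must check both that gradients of degree-$(p+1)$ polynomials have degree exactly $\le p$, and that the perturbations used to establish surjectivity of the differential of $\mathrm{ev}$ stay inside the gradient subspace; the constant vector fields suffice for this, since $c=\nabla(\langle c,x\rangle)$, so no issue arises. One should also remark that ``almost every'' is understood with respect to Lebesgue measure on the finite-dimensional space $V$ (equivalently, the Gaussian measure of any non-degenerate Gaussian vector in $V$), and that the negligible exceptional set can be taken to be the zero locus of the relevant discriminant-type polynomial, which is not identically zero precisely because of the surjectivity just discussed.
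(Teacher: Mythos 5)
Your proposal is correct and follows essentially the same route as the paper: both arguments observe that the evaluation map $(P,z)\mapsto P(z)$ is a submersion on $V\times\C^d$ (your explicit check via constant vector fields, which lie in all four spaces including the gradient ones, is exactly the point the paper leaves to the reader), invoke the parametric transversality theorem to conclude that $0$ is a regular value for almost every $P\in V$, and then apply Bezout's Theorem \ref{thm:bezout} together with the degree bound $\deg P_i\le p$ in each case.
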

\begin{proof}

Observe that in each of these four cases, the function 
\begin{align*}
\phi:V\times \C^d&\longrightarrow \C^d\\
(P,x)&\longmapsto P(x)
\end{align*}
is a smooth submersion. By the Parametric Transversality theorem, see \cite[Chp. 3]{Hirsch}, this implies that the subset of $V$ consisting of the polynomials for which $0$ is a not a regular value has zero measure. The conclusion then follows from Bezout's Theorem \ref{thm:bezout}.
\end{proof}

The Kergin interpolation introduced in the previous section yields several examples of adapted $p$-interpolating spaces in the sense of Definition \ref{def:interpolator}.
\nnjump
\begin{lemma}\label{lem:Vpgood}
Let $p$ be a positive integer and $\U$ be a open subset of $\R^d$. Then the space $V=(\Pt_p^d(\R))^d$ of real polynomial vector fields is a \miknew{strong} \sexy{} \miknew{with $V_0=(\Pt_{p-1}^d(\R))^d$,} adapted to the space $W=\CC^p(\U,\R^d)$.
\end{lemma}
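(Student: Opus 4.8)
The plan is to verify, in order, the three properties of Definition \ref{def:interpolating} (so that $V$ is a \sexy{}), then the three properties of Definition \ref{def:interpolator} (so that $V$ is adapted to $W$), and finally the surjectivity condition that upgrades everything to \emph{strong}. For property \ref{a}, I would take $V_0 = (\Pt_{p-1}^d(\R))^d$: since for $p$ distinct points $y_1,\dots,y_p$ in $\U$ the scalar polynomials $\Pt_{p-1}^1$ form a Haar system (univariate Lagrange interpolation), one can already interpolate prescribed values along a generic line through the points; more simply, multivariate Lagrange interpolation at $p$ points in general position by $\Pt_{p-1}^d$ is possible, and the degenerate configurations can be circumvented because the evaluation map $\delta_{\y}\colon \Pt_{p-1}^d \to \R^p$ is onto for \emph{every} $p$-tuple of distinct points (this is the classical fact that $p$ distinct points impose independent conditions on polynomials of degree $\le p-1$, provable by induction on $p$, peeling off one point with a product of $p-1$ affine forms vanishing at the others). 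Applying this coordinatewise gives $\delta_{\y}(V_0) = (\R^d)^p$.

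For property \ref{b}, I need that for each $k$, the degree-$d$ polynomial $G \mapsto J_{y_k}G = \det \nabla G(y_k)$ is not identically zero on $\Ker(\delta_{\y}) \cap V$. Here I would exhibit an explicit $G$ in that kernel with $\det \nabla G(y_k) \neq 0$: take scalar polynomials $\ell_1,\dots,\ell_d$ of degree $p$ that vanish at all of $y_1,\dots,y_p$ but whose gradients at $y_k$ are linearly independent — e.g. multiply a fixed degree-$(p-1)$ polynomial vanishing at $y_1,\dots,y_p$ except being nonzero at $y_k$ (Lagrange-type) by an affine form vanishing at $y_k$, arranging the $d$ affine forms to have independent differentials. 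Then $G = (\ell_1,\dots,\ell_d) \in \Ker(\delta_{\y}) \cap V$ and $J_{y_k}G \neq 0$. Property \ref{c} is exactly Corollary \ref{cor:bezout}: for almost every $P \in V$, $\#Z(P,\C^d) \le p^d$, so certainly $\#\{x \in K : P(x) = 0\} \le p^d =: C_K$ outside a negligible set $N_K$.

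For Definition \ref{def:interpolator}: property \ref{4} is what was just checked. For property \ref{3}, I would set $\K_{\x}^k := (\Pi_{\x}^k)^{\otimes d}$ from \eqref{eq:dKerg}, the Kergin interpolator for the $(p+1)$-tuple $(x_1,\dots,x_p,x_k)$, with the convention that $\K_{\x}^0 := (\Pi_{\x})^{\otimes d}$ lands in $V_0 = (\Pt_{p-1}^d(\R))^d$. Continuity of each $\K_{\x}^k$ on $W = \CC^p(\U,\R^d)$ is point 1 of Theorem \ref{thm02} applied componentwise; the interpolation identities $\delta_{\x}F = \delta_{\x}\K_{\x}^k F$ and $J_{x_k}F = J_{x_k}\K_{\x}^k F$ follow from point 3 of Theorem \ref{thm02}, since $\Pi_{\x}^k f(x_l) = f(x_l)$ for all $l$ and $\nabla \Pi_{\x}^k f(x_k) = \nabla f(x_k)$ because $x_k$ has multiplicity $\ge 2$ in $(\x,x_k)$; taking determinants of the Jacobians gives the Jacobian identity. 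Property \ref{2}, continuity of $\x \mapsto \K_{\x}^k F$, is point 2 of Theorem \ref{thm02}, again componentwise. Finally, for \emph{strong}: each $\K_{\x}^k = (\Pi_{\x}^k)^{\otimes d}$ is surjective onto $V = (\Pt_p^d(\R))^d$ because Kergin interpolation is a projector onto $\Pt_p^d(\R)$ (point 1 of Theorem \ref{thm02}), hence surjective, and a product of surjections is surjective.

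I expect the only genuinely delicate point to be property \ref{a}/\ref{b} in the degenerate regime — strictly speaking the points $y_1,\dots,y_p$ in Definition \ref{def:interpolating} are \emph{distinct} but may be in special position (collinear, etc.), so "general position" arguments must be replaced by the robust combinatorial fact that $p$ distinct points always impose independent linear conditions on polynomials of degree $\le p-1$; the clean way to see this is the inductive Lagrange construction sketched above, which also supplies the explicit kernel element needed for property \ref{b}. Everything else is a direct quotation of Theorem \ref{thm02} and Corollary \ref{cor:bezout}.
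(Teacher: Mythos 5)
Your proof is correct, and its overall architecture coincides with the paper's: the interpolator is the componentwise Kergin projector $\K_{\x}^k=(\Pi_{\x}^k)^{\otimes d}$ with $\K_{\x}^0=(\Pi_{\x})^{\otimes d}$ landing in $V_0=(\Pt_{p-1}^d(\R))^d$, Properties \ref{3} and \ref{2} are read off from Theorem \ref{thm02} (multiplicity $\ge 2$ of $x_k$ in $(\x,x_k)$ giving the Jacobian identity), Property \ref{c} is Corollary \ref{cor:bezout}, and strength follows from Kergin being a projector onto $\Pt_p^d(\R)$, exactly as in Remark \ref{rem:extKergin}. The one place where you genuinely diverge is the verification of Properties \ref{a} and \ref{b}: the paper first builds an ad hoc smooth field $F$ with prescribed values $F(y_i)=z_i$ and prescribed Jacobian $J_{y_k}F=\alpha$, and then pushes it through the Kergin projector to obtain the required element of $V_0$ (resp.\ of $\Ker(\delta_{\y})\cap V$), whereas you construct the witnesses directly as polynomials: Lagrange-type products of $p-1$ affine forms for surjectivity of $\delta_{\y}|_{V_0}$, and products $L_k\cdot a_i$ of a degree-$(p-1)$ Lagrange factor with affine forms having independent differentials at $y_k$ for Property \ref{b}. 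Both arguments are valid for arbitrary configurations of distinct points (no general-position assumption is needed in either); yours is more self-contained and elementary at that step, since it avoids invoking the existence of a smooth function with prescribed Jacobian determinant and the continuity/projector properties of Kergin just to certify a purely algebraic fact, while the paper's version has the advantage of being uniform with the Kergin machinery and of transferring verbatim to the gradient and holomorphic variants (Lemmas \ref{lem:Vpgood2}--\ref{lem:Vpgood4}), where an explicit polynomial witness would have to respect the additional structural constraints.
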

\begin{proof}
We first show that the space $V=(\Pt_p^d(\R))^d$ is a \sexy{} with \miknew{$V_0=(\Pt_{p-1}^d(\R))^d$.} Let $\y=(y_1,\dots,y_p)$ be a collection of $p$ distinct points in $\R^d$, $v_1,\ldots,v_p$ be a collection of $p$ vectors in $\R^d$ and $\alpha\in\R$. 
One can explicit a smooth function $F\in\CC^{\infty}(\R^d,\R^d)$ such that for all $1\leq l \leq p$,
\[F(y_i) = z_i\quand J_{y_k}F = \alpha.\]
Then Kergin interpolation and Theorem \ref{thm02} implies that the polynomial $(\Pi_{\y}^k)^{\otimes d}F$ satisfies
\[(\Pi_{\y}^k)^{\otimes d}F(y_i) = z_i\quand J_{y_k}(\Pi_{\y}^k)^{\otimes}F = \alpha.\]
\miknew{Moreover, the first condition can be realized by the $p$-point interpolator $\K^0_{\y}:=(\Pi_{\y})^{\otimes d}$ whose image is $V_0$.}
In particular, Properties A and B in Definition \ref{def:interpolating} are satisfied and Property C is a consequence of Corollary \ref{cor:bezout}. The properties of Kergin interpolation in Theorem \ref{thm02} also directly imply that $\K_{\x}^k=(\Pi_{\y}^k)^{\otimes d}$ is a $p$-interpolator and, thus, that the space $(\Pt_p^d(\R))^d$ is a \sexy{} adapted to the space $\CC^p(\U,\R^d)$. \miknew{To see that it is strong, see Remark \ref{rem:extKergin}.}
\end{proof}
The proof of the subsequent three lemmas are in all points similar, with the additional remarks that the Kergin interpolating polynomial of a gradient field is a gradient polynomial field, according to Lemma \ref{lem:gradientKergin}, and that the Kergin interpolating polynomial of an holomorphic field is an holomorphic polynomial field, according to Lemma \ref{lem:holomorphicKergin}.
\nnjump
\begin{lemma}\label{lem:Vpgood2}
Let $p$ be a positive integer and $\U$ be an open subset of $\R^d$. Then the space $V=\nabla \Pt_{p+1}^d(\R)$ of gradients of real polynomials is a \miknew{strong} \sexy{} \miknew{with $V_0=\nabla \Pt_{p}^d(\R)$,} adapted to the space $W=\nabla \CC^{p+1}(\R^d)$ (as a subspace of $\CC^p(\U,\R^d)$).
\end{lemma}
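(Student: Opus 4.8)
The plan is to follow, almost verbatim, the proof of Lemma~\ref{lem:Vpgood}, the only new ingredient being Lemma~\ref{lem:gradientKergin}: the Kergin interpolant of a gradient field is again a gradient field. Here $W=\nabla\CC^{p+1}(\R^d)$ is closed in $\CC^p(\U,\R^d)$ (its elements are the $\CC^p$ fields obeying the Schwarz identities, by Poincar\'e). For $\y=(y_1,\dots,y_p)\in\U^p$ and $1\le k\le p$ I would set $\K^k_{\y}:=(\Pi^k_{\y})^{\otimes d}$ and, for the auxiliary map, $\K^0_{\y}:=(\Pi_{\y})^{\otimes d}$, both restricted to $W$. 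By Lemma~\ref{lem:gradientKergin} (applied with $p+1$, resp.\ $p$, interpolation points), $\K^k_{\y}$ maps $W$ into $V=\nabla\Pt_{p+1}^d(\R)$ and $\K^0_{\y}$ maps $W$ onto $V_0=\nabla\Pt_{p}^d(\R)$.

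Next I would check the clauses of Definition~\ref{def:interpolator} for the family $\K$. Linearity and continuity of each $\K^k_{\y}$, and continuity of $\y\mapsto\K^k_{\y}F$ for fixed $F$, come from points~1 and~2 of Theorem~\ref{thm02} applied coordinate-wise. The identities $\delta_{\x}F=\delta_{\x}\K^k_{\x}F$ and $J_{x_k}F=J_{x_k}\K^k_{\x}F$ follow from the interpolation properties of $\Pi^k_{\x}=\Pi_{(\x,x_k)}$ recorded after Theorem~\ref{thm:micchelli} (the node $x_k$ has multiplicity $\ge 2$, so the gradient at $x_k$ is reproduced), together with $\K^0_\x(W)=V_0$. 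And $\K$ is \emph{strong}: since $(\Pi^k_{\y})^{\otimes d}$ restricts to the identity on $V\subset W$, the map $\K^k_{\y}$ is onto $V$ for every $1\le k\le p$ --- the same remark as in Remark~\ref{rem:extKergin}.

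It remains to verify that $V=\nabla\Pt_{p+1}^d(\R)$ is a \sexy{} with $V_0=\nabla\Pt_p^d(\R)$, i.e.\ Properties~\ref{a}--\ref{c} of Definition~\ref{def:interpolating}. For~\ref{a}: given $p$ distinct points $y_1,\dots,y_p$ and arbitrary $z_1,\dots,z_p\in\R^d$, I would exhibit $f\in\CC^\infty(\R^d)$ with $\nabla f(y_i)=z_i$ for all $i$ (a cut-off sum of linear functions $x\mapsto\langle z_i,x-y_i\rangle$ localized near the nodes); then $G:=\K^0_{\y}(\nabla f)\in V_0$ by the previous paragraph, and $G(y_i)=\nabla f(y_i)=z_i$ by point~3 of Theorem~\ref{thm02} applied coordinate-wise, whence $\delta_{\y}(V_0)=(\R^d)^p$. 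For~\ref{b}: choose $f\in\CC^\infty(\R^d)$ with $\nabla f(y_i)=0$ for every $i$ and $\det\Hess f(y_k)\ne0$ --- e.g.\ $\tfrac12\|x-y_k\|^2$ times a cut-off equal to $1$ near $y_k$ and to $0$ near the other nodes --- and put $G:=\K^k_{\y}(\nabla f)=(\Pi_{(\y,y_k)})^{\otimes d}(\nabla f)\in V$; then point~3 of Theorem~\ref{thm02} gives $G(y_j)=\nabla f(y_j)=0$ for all $j$ (multiplicity $\ge 1$) and $\nabla G(y_k)=\Hess f(y_k)$ (multiplicity $\ge 2$ at $y_k$), so $G\in\Ker(\delta_{\y})\cap V$ and $J_{y_k}G=\det\Hess f(y_k)\ne0$. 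For~\ref{c}: Corollary~\ref{cor:bezout} applied to $V=\nabla\Pt_{p+1}^d(\R)$ supplies a null set $N_K\subset V$ off which $\#\{x\in K:G(x)=0\}\le\#Z(G,\C^d)\le p^d$, so $C_K=p^d$ works.

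The step I expect to need the most care --- and the only real departure from the proof of Lemma~\ref{lem:Vpgood} --- is Property~\ref{b}: one has to pass to the \emph{doubled} interpolator $\Pi_{(\y,y_k)}$ rather than $\Pi_{\y}$, so that the Hessian of the interpolating polynomial at $y_k$, hence the Jacobian $J_{y_k}$ of the gradient field $G$, is reproduced exactly while $\nabla G$ (equivalently $G$ itself) still vanishes at all $p$ nodes. Beyond this, and the systematic appeal to Lemma~\ref{lem:gradientKergin} to keep the Kergin images inside the space of gradients, the proof is a word-for-word transcription of the polynomial-vector-field case with $(\Pt_p^d(\R))^d$ replaced throughout by $\nabla\Pt_{p+1}^d(\R)$ and $(\Pt_{p-1}^d(\R))^d$ by $\nabla\Pt_p^d(\R)$.
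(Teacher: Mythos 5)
Your proposal is correct and follows exactly the route the paper takes: its proof of this lemma is literally ``rehearse the arguments of Lemma~\ref{lem:Vpgood}, taking into account that Kergin sends gradient fields to gradient polynomials (Lemma~\ref{lem:gradientKergin})'', and you have carried out that rehearsal in detail, including the right use of the doubled node in $\Pi^k_{\y}=\Pi_{(\y,y_k)}$ for Property~\ref{b} and of Corollary~\ref{cor:bezout} for Property~\ref{c}. Nothing is missing.
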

\begin{proof}
We rehearse the arguments of the previous Lemma \ref{lem:Vpgood}, taking into account that Kergin sends gradient fields to gradient polynomials, according to Lemma \ref{lem:gradientKergin}.
\end{proof}
\begin{lemma}\label{lem:Vpgood3}
Let $p$ be a positive integer and $\U$ be a open subset of $\C^d$. Then the space $V=(\Pt_p^d(\C))^d$ of all holomorphic polynomial vector fields is a \miknew{strong} \sexy{} \miknew{with $V_0= \Pt_{p-1}^d(\C)^d$,} adapted to the space $W=\HH(U,\C^d)$ (as a subspace of $\CC^p(\U,\R^{2d})$).
\end{lemma}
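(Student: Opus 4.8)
The plan is to mirror the proof of Lemma~\ref{lem:Vpgood} essentially verbatim, the only new ingredient being that, by Lemma~\ref{lem:holomorphicKergin}, Kergin interpolation maps holomorphic fields to holomorphic polynomial fields. Throughout we use the identification $\C^d\cong\R^{2d}$, under which $W=\HH(\U,\C^d)$ is a closed subspace of $\CC^p(\U,\R^{2d})$, the space $V=(\Pt_p^d(\C))^d$ is a finite-dimensional subspace of $\CC^\infty(\U,\R^{2d})$, and the real Jacobian determinant of a holomorphic field $F$ — the quantity $J_x$ appearing in Definitions~\ref{def:interpolating} and~\ref{def:interpolator} — equals $\lvert\det_\C DF(x)\rvert^2\ge 0$, where $DF(x)$ is the $d\times d$ complex differential of $F$ at $x$.

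First I would check that $V$ is a $p$-interpolating space. Property~C follows immediately from Corollary~\ref{cor:bezout} in the case $V=(\Pt_p^d(\C))^d$: if $P\in V$ and $x\in K$ is a common zero of its $d$ coordinate polynomials, then $x\in Z(P,\C^d)$, hence $\#Z(P,K)\le\#Z(P,\C^d)\le p^d$ for every $P$ outside the null set $N_K$ furnished by that corollary, so one may take $C_K=p^d$. For Properties~A and~B, fix $p$ distinct points $\y=(y_1,\dots,y_p)$ in $\U$, target values $z_1,\dots,z_p\in\C^d$ and a scalar $\alpha\in\C$, and exhibit, as in Lemma~\ref{lem:Vpgood}, an entire (for instance polynomial) vector field $F\colon\C^d\to\C^d$ with $F(y_i)=z_i$ for all $i$ and $\det_\C DF(y_k)=\alpha$. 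By Lemma~\ref{lem:holomorphicKergin} and point~3 of Theorem~\ref{thm02}, the field $(\Pi_{\y}^k)^{\otimes d}F$ lies in $V$ and still satisfies $(\Pi_{\y}^k)^{\otimes d}F(y_i)=z_i$ and $\det_\C D\bigl((\Pi_{\y}^k)^{\otimes d}F\bigr)(y_k)=\alpha$. Taking all $z_i=0$ and $\alpha\neq 0$ produces an element of $\Ker(\delta_{\y})\cap V$ on which $J_{y_k}$ takes the value $\lvert\alpha\rvert^2\neq 0$, which is Property~B; allowing arbitrary $z_i$ and realising the value condition alone through $\K^0_{\y}:=(\Pi_{\y})^{\otimes d}$, whose image is $V_0=(\Pt_{p-1}^d(\C))^d$ by Lemma~\ref{lem:holomorphicKergin}, gives Property~A.

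It then remains to verify that $V$ is adapted to $W$ and that the interpolator is strong, with $\K^k_{\x}:=(\Pi_{\x}^k)^{\otimes d}$ for $1\le k\le p$ and $\K^0_{\x}:=(\Pi_{\x})^{\otimes d}$. By Lemma~\ref{lem:holomorphicKergin} each of these restricts on $W$ to a projector onto $V$, respectively onto $V_0$, since it fixes polynomials; in particular $\K^0_{\x}(W)=V_0$ and each $\K^k_{\x}$ with $1\le k\le p$ is surjective onto $V$, which is the strong interpolator condition (see also Remark~\ref{rem:extKergin}). The interpolation identities $\delta_{\x}F=\delta_{\x}\K^k_{\x}F$ and $J_{x_k}F=J_{x_k}\K^k_{\x}F$ hold by point~3 of Theorem~\ref{thm02} — the latter because the complex differentials of $F$ and of $\K^k_{\x}F$ at $x_k$ coincide, hence so do the squared moduli of their determinants — and the continuity properties required in Definition~\ref{def:interpolator} follow from points~1 and~2 of Theorem~\ref{thm02}, exactly as in Lemma~\ref{lem:Vpgood}; restricting the whole family to $V\subset W$ then shows that $V$ is a strong $p$-interpolating space. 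The argument is routine once this real/complex dictionary is fixed; the single point that requires genuine attention is that the Jacobian in the definitions is the \emph{squared modulus} of the complex Jacobian, so that Property~B amounts to producing a complex polynomial vector field vanishing at $y_1,\dots,y_p$ whose complex differential at $y_k$ is invertible — which the construction above supplies.
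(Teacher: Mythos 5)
Your proof is correct and follows essentially the same route as the paper, which simply invokes Lemma~\ref{lem:holomorphicKergin} and says to rehearse the argument of Lemma~\ref{lem:Vpgood} with the complex Kergin projector. The one point you make explicit that the paper leaves implicit — that under the identification $\C^d\cong\R^{2d}$ the Jacobian $J_x$ is the squared modulus of the complex Jacobian, so the value and derivative interpolation properties of Kergin transfer directly — is exactly the right dictionary, and the rest matches the paper's intended argument.
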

\begin{proof}
By Lemma \ref{lem:holomorphicKergin}, we can rehearse the argument the proof of Lemma \ref{lem:Vpgood}, using the complex Kergin projector as $p$-interpolator.
\end{proof}
\begin{lemma}\label{lem:Vpgood4}
Let $p$ be a positive integer and $\U$ be a open subset of $\C^d$. Then the space $V=\nabla \Pt_{p+1}^d(\C)$ of gradients of complex polynomials is a \miknew{strong} \sexy{} \miknew{with $V_0=\nabla \Pt_{p}^d(\C)$,} adapted to the subspace $W=\nabla \HH(U)$ (as a subspace of $\CC^p(\U,\R^{2d})$).
\end{lemma}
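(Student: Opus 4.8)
The plan is to verify Definitions \ref{def:interpolating} and \ref{def:interpolator} following the template of the proofs of Lemmas \ref{lem:Vpgood}--\ref{lem:Vpgood3}; the only new ingredient is that the complex Kergin operator simultaneously preserves the property of being a gradient field (Lemma \ref{lem:gradientKergin}) and that of being holomorphic (Lemma \ref{lem:holomorphicKergin}). Concretely, I would take as $p$-interpolator the complex Kergin maps $\K_{\x}^k := (\Pi^k_{\x})^{\otimes d}$ for $1\le k\le p$ and $\K_{\x}^0 := (\Pi_{\x})^{\otimes d}$, all restricted to $W = \nabla\HH(\U)$. If $g \in \HH(\U)$ then each $\partial_c g$ is holomorphic, so $\Pi^k_{\x}(\partial_c g) \in \Pt_p^d(\C)$ by Lemma \ref{lem:holomorphicKergin}, while Lemma \ref{lem:gradientKergin} guarantees that these components fit together into the gradient of a polynomial; hence $\K_{\x}^k(W) \subseteq V = \nabla\Pt_{p+1}^d(\C)$ and $\K_{\x}^0(W) \subseteq \nabla\Pt_p^d(\C) = V_0$. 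Since $(\Pi^k_{\x})^{\otimes d}$ is a projector onto $V$ and $V \subseteq W$, its restriction to $W$ fixes $V$ pointwise, hence is surjective onto $V$; this is the \emph{strong} property, and the same reasoning applied to $\K_{\x}^0$ gives $\K_{\x}^0(W) = V_0$.

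Next I would check Properties A, B, C for $p$ distinct points $y_1,\ldots,y_p \in \U$. The one point of departure from the real case of Lemma \ref{lem:Vpgood} is that smooth bump functions are not available in the holomorphic category, so the auxiliary interpolants must be exhibited as polynomials. Fix linear forms $\ell_i$ on $\C^d$ vanishing at $y_i$ but with $\ell_i(y_m)\neq 0$ for $m\neq i$. For Property A and a target $(z_1,\ldots,z_p) \in (\C^d)^p$, the entire function $g(w) = \sum_{m=1}^p \big(\sum_{j=1}^d z_m^j(w^j-y_m^j)\big)\prod_{i\neq m}\big(\ell_i(w)/\ell_i(y_m)\big)^2$ satisfies $\nabla g(y_i) = z_i$, because each summand vanishes to order $\ge 2$ at every $y_i$ with $i\neq m$. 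For Property B at a fixed index $k$, the entire function $h(w) = \big(\sum_{j=1}^d (w^j - y_k^j)^2\big)\prod_{i\neq k}\ell_i(w)^2$ has $\nabla h(y_i) = 0$ for all $i$ and $\Hess h(y_k) = 2\big(\prod_{i\neq k}\ell_i(y_k)^2\big)\Id$, so $\det\Hess h(y_k) \neq 0$. In both cases $\nabla g, \nabla h \in W$; applying $\K_{\y}^0$ to $\nabla g$ and $\K_{\y}^k$ to $\nabla h$ and invoking point 3 of Theorem \ref{thm02} — for the tuple $\y$ in the first case, and for $(\y,y_k)$, in which $y_k$ has multiplicity $2$, in the second — one obtains gradient polynomials lying in $V_0$, resp. in $\Ker(\delta_{\y})\cap V$, that reproduce the values $(z_i)_i$, resp. keep the Hessian at $y_k$ and hence have $J_{y_k}\neq 0$. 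This proves Properties A (with $V_0$) and B. Property C is precisely the content of Corollary \ref{cor:bezout} applied to $V = \nabla\Pt_{p+1}^d(\C)$: off a Lebesgue-negligible $N_K \subseteq V$, every $G \in V$ has $\#Z(G,\C^d) \le p^d$, hence $\#\{x \in K : G(x) = 0\} \le p^d$ for any compact $K \subseteq \U$.

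Finally, linearity and $\CC^q$-continuity of each $\K_{\x}^k$, as well as continuity of $\x \mapsto \K_{\x}^k F$, are inherited componentwise from points 1 and 2 of Theorem \ref{thm02}. I do not anticipate a genuine obstacle here: the argument is a transcription of Lemma \ref{lem:Vpgood} once Lemmas \ref{lem:gradientKergin} and \ref{lem:holomorphicKergin} are in hand, and the only spot that deserves care is the explicit polynomial construction of the auxiliary functions above, which replaces the bump-function argument valid only in the real smooth setting.
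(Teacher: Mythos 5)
Your proof is correct and follows essentially the same route as the paper, which simply says to repeat the argument of Lemma \ref{lem:Vpgood} using Lemmas \ref{lem:gradientKergin} and \ref{lem:holomorphicKergin} to see that Kergin interpolation preserves both the gradient and the holomorphic structure. The only difference is that you fill in a detail the paper leaves implicit — replacing the smooth auxiliary function of the real case by explicit entire (polynomial) functions realizing prescribed gradient values and a non-degenerate Hessian — and this is done correctly.
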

\begin{proof}
We can argue as in the proof of Lemma \ref{lem:Vpgood}, using both Lemma \ref{lem:gradientKergin} and Lemma \ref{lem:holomorphicKergin}.
\end{proof}
%
\begin{remark}
\label{rem:extKergin}
The proof of Lemma \ref{lem:Vpgood} also shows that the space \miknew{$V=(\Pt_p^d(\R))^d$ is a \sexy{}, with $V_0=(\Pt_{p-1}^d(\R))^d$, adapted to a closed subset $W$ of $\CC^p(\U,\R^d)$  as soon as for all $\x\in \U^p$ the mappings
\bega 
\Pi_{\x} &: W\mapsto (\Pt_{p-1}^d(\R))^d 
\eega
is surjective. In all these examples, the interpolator is strong, which happens as soon as also the maps 
\bega 
\Pi_{\x}^k &: W
\mapsto (\Pt_p^d(\R))^d
\eega
are surjective. 
Clearly, this is true for $W=(\Pt_p^d(\R))^d$. Therefore, $(\Pt_p^d(\R))^d$ is in fact a strong $p$-interpolating space.}
A similar statement holds for the other three variants.
\end{remark}
\section{Proof}\label{sec:proof}
In this section we prove the general Theorem \ref{thm:general}. As explained in the introduction, the proof relies on the Gram--Schmidt orthogonalization procedure on the family of evaluation maps at points $y_1,\ldots,y_p\in \U^p\setminus\Delta$. Before making a few observations around Kac--Rice formula and \horny{}s, we recall the very classical Gram-Schmidt theorem.

Let $V$ be a finite dimensional vector space endowed with a scalar product $\langle\,,\,\rangle$.

%
For a linear subspace $F\subset V$ we define $\Proj_F$ as the orthogonal projector on the space $F$.
\nnjump
\begin{theorem}[Gramm--Schmidt]
\label{thm01}
Let $V$ be a euclidean space and let $v_1,\ldots,v_p$ be a free family of vectors in $V$. Let $V_i = \Span(v_1,\ldots,v_i)$. Then there is a unique orthogonal family of $V$, denoted $u_1,\ldots,u_p$ such that
\begin{itemize}
\item $\forall 1\leq i\leq p, \quad u_i\in V_i$
\item $\langle u_i,u_j\rangle = \delta_{ij}$
\item $\langle u_i, v_i\rangle >0$.
\end{itemize}
Explicitly, one has
\be \label{eq:GS}
u_i = \frac{v_i - \Proj_{V_{i-1}}(v_i)}{\|v_i - \Proj_{V_{i-1}}(v_i)\|}, \quad \text{where } V_0=\{0\}.\ee
\end{theorem}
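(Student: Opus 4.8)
The plan is to establish existence and uniqueness separately, both by induction on the index $i$, and in both cases the engine is the elementary fact that since $v_1,\dots,v_i$ is free, $V_i=\Span(v_1,\dots,v_i)$ has dimension $i$, so the orthogonal complement of $V_{i-1}$ inside $V_i$ is a single line.

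For existence, I would verify directly that the vectors defined by the explicit formula \eqref{eq:GS} meet the three requirements. The only preliminary point is that each denominator is nonzero: because the family is free, $v_i\notin V_{i-1}=\Span(v_1,\dots,v_{i-1})$, so $v_i-\Proj_{V_{i-1}}(v_i)\neq 0$ and $u_i$ is well defined and of unit norm. That $u_i\in V_i$ is immediate since both $v_i$ and $\Proj_{V_{i-1}}(v_i)$ lie in $V_i$. Orthogonality follows because $v_i-\Proj_{V_{i-1}}(v_i)$ is by construction orthogonal to $V_{i-1}$, while each earlier $u_j$ with $j<i$ lies in $V_j\subseteq V_{i-1}$; together with $\|u_i\|=1$ this yields $\langle u_i,u_j\rangle=\delta_{ij}$. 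Finally, expanding $\langle u_i,v_i\rangle$ and discarding the component of $v_i$ inside $V_{i-1}$ (to which $u_i$ is orthogonal) leaves $\langle u_i,v_i\rangle=\|v_i-\Proj_{V_{i-1}}(v_i)\|>0$, so the positivity condition holds.

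For uniqueness, suppose $u_1,\dots,u_p$ is any family satisfying the three bullet points. I would first observe, by induction, that $\Span(u_1,\dots,u_{i-1})=V_{i-1}$: these are $i-1$ orthonormal, hence independent, vectors contained by hypothesis in the $(i-1)$-dimensional space $V_{i-1}$. Consequently the condition $\langle u_i,u_j\rangle=0$ for all $j<i$ forces $u_i$ into the orthogonal complement of $V_{i-1}$ within $V_i$, which is one-dimensional. A unit vector on that line is determined up to sign, and the sign is pinned down by $\langle u_i,v_i\rangle>0$; the resulting vector is exactly the normalized one given by \eqref{eq:GS}. Hence the family coincides with the explicit construction and is unique.

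There is no serious obstacle, since the statement is the classical Gram–Schmidt procedure. The only point requiring genuine care is the uniqueness argument, where one must check that the sign ambiguity of a unit vector on a one-dimensional line is resolved precisely by the positivity condition $\langle u_i,v_i\rangle>0$; this is also why the \emph{free family} $v_1,\dots,v_p$, and not merely the flag $V_0\subset V_1\subset\cdots\subset V_p$, must enter the hypotheses.
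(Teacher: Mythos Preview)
Your proof is correct and entirely standard. The paper itself does not supply a proof of this statement: it records the classical Gram--Schmidt theorem together with the explicit formula \eqref{eq:GS} and moves on (the only follow-up is Remark~\ref{rem:smoothGS} on the smooth dependence of the change-of-basis matrix). Your existence-by-verification and uniqueness-by-dimension-count argument is exactly the textbook one, and there is nothing to add.
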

\begin{remark}\label{rem:smoothGS}
Let $\phi(p,V)$ be the open subset of $V^{p}$ consisting of all free families of $p$ elements in $V.$ Notice that the theorem implies the existence of a function $A\colon \phi(p,V)\to GL(p)$ such that $A(v_1,\dots,v_p)$ is the matrix of change of basis from the basis $(v_1,\dots,v_p)$ to the orthonormal basis $(u_1,\dots,u_p).$ One can easily deduce from the formula \eqref{eq:GS} that $A$ is a smooth function.
\end{remark}
\subsection{Kac--Rice and \horny{}s}
\label{rem:transversality}
The Kac-Rice formula cited in Theorem \ref{thm03} is more general and holds also for a certain class non-Gaussian fields, see \cite{Aza09,AdlerTaylor}, and also \cite{MathiStec, KRStec} for further generalizations. It is plausible that the proof in this paper generalizes to a certain class of non-Gaussian fields. The Gaussian assumption of this paper is mainly present to ensure the validity of Kac--Rice formula and the non-degeneracy of the random field, but nowhere in the proof do we use the explicit Gaussian density of the field throughout computations.
\jump

In \cite[Thm. 6.3]{Aza09}, the validity of Kac--Rice formula holds on the additional assumption that, almost surely, $0$ is a regular value for $F$, i.e. for every $x\in F^{-1}(0),$ the differential $\nabla_x F$ is surjective. It is again proved in \cite[Prop. 6.5]{Aza09} that this technical assumption is true when the random field $F$ is of class $\CC^2$ and has non-degenerate density at each point $x\in\U$. Analogously, it has been proved in \cite[Theorem 7]{dtgrf} that when $F$ is a smooth Gaussian field having a non-degenerate density at each point $x\in\U$, then $0$ is a.s. a regular value. When $F$ is a Gaussian field from a subset $\U$ of $\R^d$ to $\R^d,$ the latter proof reduces to an application of  Sard's theorem for $\CC^1(\R^d,\R^d)$ functions.\footnote{In general, Sard's theorem holds for functions $\F\in\CC^k(\R^d,\R^{d'})$ such that $k\ge \max\{1,d-d'+1\}$, see \cite{lee2003introduction}.} It implies in this case the validity of Kac--Rice formula for $\CC^1$ Gaussian random fields with non-degenerate evaluations.
\jump

We first prove the validity of Kac--Rice formula in the context of \horny{}s. \miknew{We denote by $\Ss^+(V_0)$ the space of semipositive definite quadratic forms on $V_0^*$. The covariance tensor of a Gaussian random vector in $V_0$ lives naturally in this space.}

\nnjump
\begin{lemma}
\label{lemma-conti}
Let $q,d,p$ be positive integers. Let $\F\colon \U\to \R^d$ be a $\CC^q$ Gaussian field whose support $W$ admits an adapted \horny{} $V$.  Then, for all $\x\in \U^p$, the random Gaussian function $\K_{\x}^0 \F$ has a non-degenerate density on $V_0$ and the mapping
\begin{align*}
\U^p&\longrightarrow \Ss^+(V_0)\\
\x&\longmapsto \Cov(\K_{\x}^0 \F)
\end{align*}
is continuous.
\end{lemma}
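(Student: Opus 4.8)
The plan is to first establish the non-degeneracy of $\K_{\x}^0\F$ on $V_0$, and then prove continuity of the covariance map. For the non-degeneracy, recall that by definition of a $p$-interpolator, $\K_{\x}^0(W) = V_0$; that is, $\K_{\x}^0\colon W\to V_0$ is surjective and continuous and linear. The support $W$ of the Gaussian field $\F$ is, by definition, the support of its Gaussian measure on $\CC^q(\U,\R^d)$, so $\F$ is "full" in $W$ in the sense that its covariance operator has dense range in $W$ (equivalently, no nonzero continuous linear functional annihilates $\F$ almost surely). Pushing forward by the continuous linear surjection $\K_{\x}^0$, we obtain that $\K_{\x}^0\F$ is a Gaussian random vector in the finite-dimensional space $V_0$ whose law has support equal to all of $V_0$: indeed, if $\ell\in V_0^*$ satisfied $\ell(\K_{\x}^0\F)=0$ a.s., then $\ell\circ \K_{\x}^0$ would be a continuous linear functional on $W$ vanishing a.s.\ on $\F$, hence identically zero on $W$; since $\K_{\x}^0$ is surjective onto $V_0$, this forces $\ell=0$. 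A Gaussian vector in a finite-dimensional space whose support is the whole space has a non-degenerate (positive definite) covariance, so $\K_{\x}^0\F$ has a non-degenerate density on $V_0$.

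For the continuity statement, fix a basis $(b_1,\dots,b_m)$ of $V_0$, inducing coordinates and hence an identification $\Ss^+(V_0)\cong$ symmetric positive semidefinite $m\times m$ matrices. The covariance of $\K_{\x}^0\F$ in these coordinates has entries of the form $\E[\langle \K_{\x}^0\F, b_i^*\rangle\langle \K_{\x}^0\F, b_j^*\rangle]$, where $(b_i^*)$ is the dual basis. The key input is property \eqref{2} in Definition \ref{def:interpolator}: for each fixed $F\in W$, the map $\x\mapsto \K_{\x}^0 F$ is continuous from $\U^p$ to $V_0$. One should upgrade this pointwise continuity to continuity of the second-moment bilinear form. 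I would argue as follows: the family of linear maps $\{\K_{\x}^0\}_{\x\in \U^p}$, restricted to a compact neighbourhood of a given point $\x_0$, is pointwise bounded on $W$ (by continuity in $\x$ and compactness) and hence, by the Banach--Steinhaus theorem applied on the Fréchet space $W$ (a closed subspace of $\CC^q(\U,\R^d)$, hence Fréchet), equicontinuous — so there is a fixed continuous seminorm $\nu$ on $W$ and a constant dominating $\|\K_{\x}^0 F\|$ by $\nu(F)$ uniformly for $\x$ near $\x_0$. Since $\F$ is a Gaussian random element of $W$, the random variable $\nu(\F)^2$ is integrable (Fernique's theorem). Combining the uniform bound $\|\K_{\x}^0\F - \K_{\x'}^0\F\| \le $ (something going to $0$ pointwise in $\omega$, dominated by $2\nu(\F)$), dominated convergence yields $\E\|\K_{\x}^0\F - \K_{\x'}^0\F\|^2 \to 0$ as $\x'\to\x$, which gives continuity of each entry $\Cov(\K_{\x}^0\F)_{ij}$, hence of the map $\x\mapsto \Cov(\K_{\x}^0\F)$.

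The main obstacle is the passage from the pointwise-in-$\omega$, pointwise-in-$F$ continuity of $\x\mapsto \K_{\x}^0 F$ to the continuity of the \emph{covariance}, i.e.\ an $L^2(\Omega)$ statement. This requires a uniform integrable domination, which is exactly what the Banach--Steinhaus equicontinuity argument plus Fernique's theorem provide; the only care needed is to check that $W$, as a closed subspace of the Fréchet space $\CC^q(\U,\R^d)$, is itself Fréchet (so that Banach--Steinhaus applies) and that $\K_{\x}^0$ lands in the \emph{fixed} finite-dimensional space $V_0$ (so that all norms on the target are equivalent and the choice of $\|\cdot\|$ on $V_0$ is immaterial). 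Everything else is routine.
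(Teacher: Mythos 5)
Your proposal is correct and follows essentially the same route as the paper, whose own proof is a one-line appeal to the continuity and surjectivity of $\K_{\x}^0$ from Definition \ref{def:interpolator}. You in fact supply details the paper leaves implicit — the support-of-the-Gaussian-measure argument for non-degeneracy, and the Banach--Steinhaus/Fernique/dominated-convergence upgrade from pointwise continuity of $\x\mapsto\K_{\x}^0 F$ to continuity of the covariance — and both steps are sound.
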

\begin{proof}
The proof directly follows from the definition of a \horny{}, since the mapping $\K_{\x}^k\F$ is continuous and surjective.
\end{proof}

\begin{lemma}\label{lem:hyp}
Let $q,d,p$ be positive integers. Let $\F\colon \U\to \R^d$ be a $\CC^q$ Gaussian field whose $\CC^q$-support $W$  admits an adapted \horny{} $V$. Then, the asumptions of Theorem \ref{thm03} are satisfied and the Kac-Rice formula \eqref{eq:kr} for the $p$-th moment holds.
\end{lemma}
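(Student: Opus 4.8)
The plan is to verify the two hypotheses required by the Kac--Rice formula of Theorem \ref{thm03}, namely: (i) for any $p$ distinct points $y_1,\dots,y_p$ in $\U$, the Gaussian vector $(F(y_1),\dots,F(y_p))$ of size $dp$ is non-degenerate; and (ii) almost surely, $0$ is a regular value of $F$ on the relevant compact set. Once these are in place, Theorem \ref{thm03} applies verbatim and gives \eqref{eq:kr}.

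For (i), fix distinct points $\y=(y_1,\dots,y_p)\in\U^p\setminus\Delta$ and let $\x=\y$. By Definition \ref{def:interpolator}, the map $\K_{\y}^0\colon W\to V_0$ is continuous and surjective onto $V_0$, and by Property \ref{a} (Definition \ref{def:interpolating}) the family of evaluation forms $\delta_{\y}|_{V_0}$ is free, i.e. $\delta_{\y}(V_0)=(\R^d)^p$. Combining these, $\delta_{\y}\circ\K_{\y}^0\colon W\to(\R^d)^p$ is surjective; and since by Definition \ref{def:interpolator} we have $\delta_{\y}F=\delta_{\y}\K_{\y}^0 F$ for all $F\in W$, the map $\delta_{\y}|_W\colon W\to(\R^d)^p$ is itself surjective. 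Now $F$ is a Gaussian field with $\CC^q$-support equal to $W$, so the linear image of the (centered) Gaussian measure on $\CC^q$ under the continuous surjective linear map $\delta_{\y}|_W$ is a Gaussian measure on $(\R^d)^p$ whose support is the image of $W$, hence all of $(\R^d)^p$; equivalently $(F(y_1),\dots,F(y_p))$ is a non-degenerate $\R^{dp}$-valued Gaussian vector. This is hypothesis (i). In particular (the case $p=1$, or Lemma \ref{lemma-conti}) each $F(x)$ has a non-degenerate density, which is the input needed for (ii): invoking the regularity of the sample paths ($\CC^q$ with $q\ge 1$, and the discussion in Section \ref{rem:transversality} using Sard's theorem, resp. \cite[Prop.~6.5]{Aza09} or \cite[Theorem~7]{dtgrf}) we conclude that almost surely $0$ is a regular value of $F$. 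With (i) and (ii) established, Theorem \ref{thm03} yields the Kac--Rice formula \eqref{eq:kr} for the $p$-th factorial moment.

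The only genuinely delicate point is the passage "continuous surjective linear image of the Gaussian measure has full support $\Rightarrow$ non-degeneracy of the finite-dimensional marginal." This is where the hypothesis that $W$ is exactly the $\CC^q$-support of $F$ (not merely a space containing the sample paths) is used, together with the surjectivity part of the definition of a $p$-interpolator; it is the reason Property \ref{a} is phrased via the auxiliary subspace $V_0$ and why $\K_{\x}^0$ is required to map onto $V_0$. Everything else is a direct unwinding of Definitions \ref{def:interpolating} and \ref{def:interpolator} together with the standard regular-value statement recalled in Section \ref{rem:transversality}.
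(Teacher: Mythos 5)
Your proposal is correct and follows essentially the same route as the paper: the paper establishes non-degeneracy of $(F(y_1),\dots,F(y_p))$ via Lemma \ref{lemma-conti} (surjectivity of $\K_{\y}^0$ onto $V_0$ from Definition \ref{def:interpolator}, Property \ref{a} giving $\delta_{\y}(V_0)=(\R^d)^p$, and the identity $\delta_{\y}F=\delta_{\y}\K_{\y}^0F$), and handles the regular-value hypothesis exactly as you do, through the discussion in Section \ref{rem:transversality}. Your write-up is merely more explicit about the support-of-pushforward argument that the paper leaves implicit.
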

\begin{proof}
It follows directly from the previous Lemma \ref{lemma-conti}, since
\[(F(y_1),\ldots,F(y_p)) = (\K_{\y}^0F(y_1),\ldots,K_{\y}^0F(y_p))\]
is a non-degenerate Gaussian vector on $V$.
\end{proof}
\subsection{Proof of the general theorem: Theorem \ref{thm:general}}
We can now prove Theorem \ref{thm:general}. Let $p,q,d$ be positive integers and $\U$ be an open convex subset of $\R^d$. Let $F:\U\rightarrow\R^d$ be a $\CC^q$ Gaussian random field with $\CC^q$-support $W$, and let $V$ be a \horny{} adapted to $W$, in the sense of Definition \ref{def:interpolator}. We endow the space \miknew{$V_0^*$ with a scalar product $\langle\,.\,\rangle$ and we endow the space of polynomials $\Pt_d[V]$ with a norm $\|.\|$.}\jump
 
For an element $\y\in \U^p\setminus \Delta$, we define $D_{\y}$ as the family of $dp$ linear forms on $\CC^0(\U,\R^d)$ obtained by applying the Gram--Schmidt procedure described in Theorem \ref{thm01}, on the family $\delta_{\y}$ with respect to the scalar product defined on \miknew{$V_0^*$}. Letting $A_{\y}\in GL(dp)$ be the associated transformation matrix, if we interpret $\delta_{\y}$ and $D_{\y}$ as linear functions from $\CC^0(\U,\R^d)$ to  $(\R^d)^p $,
then
\[\delta_{\y} = A_{\y}\underline{D}_{\y}. 
\]
\begin{proposition}\label{prop:keyD}
The linear functions $D_{\y}$ have the following two key properties. For every $\F\in W,$
\be 
\delta_{\y}\F=0 \iff  D_{\y}\F=0  
\quad \text{and} \quad \miknew{D_{\y}\big|_{V_0} \text{ is orthonormal in $V_0^*$.}}
\ee 
\end{proposition}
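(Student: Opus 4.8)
The plan is to read off both claims directly from the Gram--Schmidt construction of Theorem \ref{thm01}. Recall that $D_{\y}$ is obtained from $\delta_{\y}$ by the Gram--Schmidt procedure applied with respect to the chosen scalar product on $V_0^*$, and that $A_{\y}\in GL(dp)$ is the corresponding change-of-basis matrix, so that $\delta_{\y}=A_{\y}\underline{D}_{\y}$ as linear maps $\CC^0(\U,\R^d)\to(\R^d)^p$. The first equivalence is then immediate: since $A_{\y}$ is invertible, for any $F\in W$ we have $\delta_{\y}F=A_{\y}(\underline{D}_{\y}F)$, hence $\delta_{\y}F=0$ if and only if $\underline{D}_{\y}F=0$. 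No property of $W$ beyond linearity is needed here.

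For the second claim, the key point is that the Gram--Schmidt procedure of Theorem \ref{thm01} has to be applied to a \emph{free} family, and this is precisely where Property \ref{a} of Definition \ref{def:interpolating} (the defining property of a $p$-interpolating space, with its distinguished subspace $V_0$) enters: for $\y\in\U^p\setminus\Delta$ the points $y_1,\dots,y_p$ are distinct, so $\delta_{\y}|_{V_0}$ is a free family of $dp$ elements of $V_0^*$, equivalently $\delta_{\y}(V_0)=(\R^d)^p$. Thus the construction of Theorem \ref{thm01} can be carried out inside $V_0^*$ with the given scalar product, producing an orthonormal family; by uniqueness in that theorem this orthonormal family is exactly $\underline{D}_{\y}|_{V_0}$ (the restriction of the $D_{\y}$ built on $\CC^0(\U,\R^d)$), because the Gram--Schmidt recursion \eqref{eq:GS} only ever refers to the vectors $\delta_{y_k}|_{V_0}$ and their spans. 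Hence $D_{\y}|_{V_0}$ is orthonormal in $V_0^*$, and in particular $A_{\y}$ is exactly the change-of-basis matrix furnished by Remark \ref{rem:smoothGS}, depending smoothly on $\y$.

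I do not expect a genuine obstacle here; the only subtlety to be careful about is the bookkeeping between the ``ambient'' evaluation forms $\delta_{y_k}\in(\CC^0(\U,\R^d))^*$ and their restrictions to $V_0$. One must check that the Gram--Schmidt orthogonalization commutes with restriction to $V_0$ — i.e.\ that orthogonalizing $\delta_{\y}$ as abstract forms and then restricting gives the same thing as restricting first and then orthogonalizing in $V_0^*$ — which holds because the scalar product is defined on $V_0^*$ in the first place and the coefficients in \eqref{eq:GS} are computed using that scalar product, so they are literally the same numbers in both pictures. Once this is observed, both assertions follow, and the smoothness of $\y\mapsto A_{\y}$ on $\U^p\setminus\Delta$ is inherited from Remark \ref{rem:smoothGS} via the continuity of $\y\mapsto\delta_{\y}|_{V_0}$.
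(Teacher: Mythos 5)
Your proof is correct and takes essentially the same route as the paper, which simply asserts that both properties hold ``by construction'' of the Gram--Schmidt procedure, citing Property \ref{a}; you have merely filled in the details (invertibility of $A_{\y}$ for the first equivalence, freeness of $\delta_{\y}|_{V_0}$ from Property \ref{a} and the fact that the Gram--Schmidt coefficients are computed in $V_0^*$ for the second). The bookkeeping point you flag --- that orthogonalizing the ambient forms with coefficients computed on $V_0^*$ and then restricting gives the orthonormal family in $V_0^*$ --- is exactly the right thing to check and is handled correctly.
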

\begin{proof}
The claimed property is true by construction and, in particular, thanks to Assumptions \ref{3} and \ref{a}.
\end{proof}
We denote by $\Proj_{\Ker(\delta_{\y})}$ the orthogonal projector on $V$ onto the subspace $\Ker(\delta_{\y})\cap V$. For $\y\in\U^p\setminus\Delta$ and $1\leq k\leq p$ we let
\[\lambda_{\y}^k = \|J_{y_k}\circ \Proj_{\Ker(\delta_{\y})}\|,\]
the norm $\|\cdot \|$ is the one defined in the beginning of this section on the space $\Pt_d[V]$. By Assumption \ref{b}, the polynomial
$J_{y_k}$ is non-zero on the subset $\Proj_{\Ker(\delta_{\y})}$, which implies that the quantity $\lambda_{\y}^k$ is positive.  We can thus define the normalized polynomial in $\Pt_d[V]$
\[
h^k_{\y} = \frac{J_{y_k}\circ \Proj_{\Ker(\delta_{\x})}}{\lambda_{\y}^k},
\]
and the polynomial in $\Pt_d[W]$
\[H_{\y}^k:=h^k_{\y}\circ \K_{\y}^k.\]
\miknew{
\begin{remark}\label{rem:Hkzero}
While $h_{\y}^k$ is always a nonzero polynomial on $V$, it is possible, in general, that $H^k_{\y}=0$ as a polynomial on $W$. If the $p$-interpolator $\K$ is strong, it follows from Property \ref{3} and the surjectivity of $\K^k_{\y}$ that the polynomial $H^k_{\y}$ is nonzero for all $1\le k \le p$. By construction, in this  case, also the restricton $H^k_{\y}|_{W\cap \ker(\delta_{\x})}$ is nonzero.
\end{remark}
}
\nnjump
\begin{proposition}\label{prop:keyH}
The polynomial functions $H^k_{\y}$ have the following two key properties. For every $\F\in W,$ such that $\delta_{\y}\F=0,$ we have
\be 
J_{y_k}\F= \lambda^k_{\y} H^k_{\y}(\F) \quad \text{and} \quad h^k_{\y}\big|_{V} \text{ is in the sphere of $\Pt_d[V]$.}
\ee 
\end{proposition}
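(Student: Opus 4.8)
The statement of Proposition \ref{prop:keyH} is essentially a bookkeeping consequence of the definitions of $\lambda_{\y}^k$, $h_{\y}^k$ and $H_{\y}^k$, together with Property \ref{3} of the $p$-interpolator; the plan is to unwind these definitions carefully.

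The plan is to prove the two claims in order. For the first identity, fix $\F\in W$ with $\delta_{\y}\F=0$. By Proposition \ref{prop:keyD} this is equivalent to $\underline{D}_{\y}\F=0$, and in any case it means $\F\in \Ker(\delta_{\y})$, so $\K_{\y}^k\F\in V$ satisfies $\delta_{\y}(\K_{\y}^k\F)=\delta_{\y}\F=0$ by Property \ref{3}, i.e. $\K_{\y}^k\F\in\Ker(\delta_{\y})\cap V$. Hence $\Proj_{\Ker(\delta_{\y})}(\K_{\y}^k\F)=\K_{\y}^k\F$. Now apply Property \ref{3} again in the form $J_{y_k}\F=J_{y_k}(\K_{\y}^k\F)$, and then use the definition of $h_{\y}^k$ to write
\[
J_{y_k}\F = J_{y_k}(\K_{\y}^k\F) = J_{y_k}\bigl(\Proj_{\Ker(\delta_{\y})}(\K_{\y}^k\F)\bigr) = \lambda_{\y}^k\, h_{\y}^k\bigl(\K_{\y}^k\F\bigr) = \lambda_{\y}^k\, H_{\y}^k(\F),
\]
which is the desired equality, the last step being the definition $H_{\y}^k=h_{\y}^k\circ\K_{\y}^k$.

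For the second claim, that $h_{\y}^k|_{V}$ lies on the unit sphere of $\Pt_d[V]$, one simply observes that $h_{\y}^k = (J_{y_k}\circ\Proj_{\Ker(\delta_{\y})})/\lambda_{\y}^k$ is, as a polynomial on $V$ of degree $d$, equal to its defining numerator divided by $\lambda_{\y}^k = \|J_{y_k}\circ\Proj_{\Ker(\delta_{\y})}\|$, which is exactly the norm of that numerator in $\Pt_d[V]$; positivity of $\lambda_{\y}^k$ was already recorded, using Assumption \ref{b}, which guarantees $J_{y_k}$ does not vanish identically on $\Ker(\delta_{\y})\cap V$, hence $J_{y_k}\circ\Proj_{\Ker(\delta_{\y})}\neq 0$ in $\Pt_d[V]$. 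Therefore $\|h_{\y}^k|_V\|=1$.

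There is no real obstacle here: the only point requiring a word of care is the compatibility between the restriction viewpoint (treating $J_{y_k}$ as a polynomial on $V$, where it is genuinely of degree $d$ as noted before Definition \ref{def:interpolating}) and the interpolator viewpoint (composing with $\K_{\y}^k\colon W\to V$ to land back in $\Pt_d[W]$); Property \ref{3} is precisely the statement that makes these two views agree on the Jacobian and on the evaluations. If one wanted the sharper statement that $H_{\y}^k$ is itself nonzero, one would additionally invoke Remark \ref{rem:Hkzero} and the strongness hypothesis, but for the proposition as stated only the displayed identity and the normalization of $h_{\y}^k$ are needed.
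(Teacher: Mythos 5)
Your proof is correct and follows the same route as the paper, which simply records that both relations hold ``by construction'' via Assumption \ref{3} (for the identity $J_{y_k}\F=\lambda_{\y}^k H_{\y}^k(\F)$, using that $\K_{\y}^k\F$ lies in $\Ker(\delta_{\y})\cap V$ and is therefore fixed by the projector) and Assumption \ref{b} (for $\lambda_{\y}^k>0$ and hence the normalization of $h_{\y}^k$). Your write-up just makes this unwinding explicit; nothing is missing.
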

\begin{proof}
The relations in this statement are true by construction, in particular, thanks to Assumptions \ref{3} and \ref{b}.
\end{proof}
Let us define two functions $R,\sigma_\F\colon \U^p\setminus \Delta\to [0+\infty)$ such that 
\be \label{eq:R}
R(\y) := \frac{\prod_{k=1}^p|\lambda^k_{\y}|}{|\det A_{\y}|}\quand \sigma_\F(\y) = \E\cdbr{\prod_{k=1}^p |H_{\y}^{k}\F|}{D_{\y}\F = 0}\psi_{D_{\y}\F}(\underline{0}).\ee
Note that the function $R$ does not depend on the process $\F$ but only on the space $V$. Moreover, recalling Remark \ref{rem:smoothGS}, we observe that the function
\be 
\y\mapsto\delta_{\y} \in \phi(dp,V_0^*)=
\kop \text{free families of $pd$ elements in \miknew{$V_0^*$}} \pok
\ee
 is of class $\CC^q,$ whenever $V\subset \CC^q(\U,\R^d).$ It follows that the matrix valued function $\y\mapsto A_{\y}\in GL(dp)$ produced by the Gram-Schmidt's theorem is also of class $\CC^q$. For the same reason, the function $\y\mapsto \lambda_{\y}^k$ and $R$ are of class $\CC^q.$
\begin{proposition}\label{prop:krR}
The Kac--Rice density 
\[\rho_\F:\y\mapsto\E\cdbr{\prod_{k=1}^p |J_{y_k}\F|}{\delta_{\y}\F= 0}\psi_{\delta_{\y}\F}(\underline{0})\]
can be rewritten as
\[\rho_\F(\y) = R(\y)\sigma_\F(\y),\]
where $R$ and $\sigma_\F$ are the functions defined in \eqref{eq:R}. 
\end{proposition}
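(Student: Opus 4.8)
The plan is to directly transform the conditional expectation defining $\rho_\F$ by substituting the change of basis $\delta_{\y} = A_{\y} D_{\y}$ into the density factor and the Kergin interpolation identities from Property \ref{3} into the Jacobian factors. First I would rewrite the Gaussian density $\psi_{\delta_{\y}\F}(\underline 0)$: since $\delta_{\y}\F = A_{\y} D_{\y}\F$, the covariance of $\delta_{\y}\F$ is $A_{\y}\Cov(D_{\y}\F)A_{\y}^T$, so by the standard transformation rule for Gaussian densities $\psi_{\delta_{\y}\F}(\underline 0) = |\det A_{\y}|^{-1}\psi_{D_{\y}\F}(\underline 0)$. This produces the factor $|\det A_{\y}|^{-1}$ appearing in $R(\y)$.

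Next I would handle the conditional expectation. Because $\delta_{\y}\F = 0 \iff D_{\y}\F = 0$ by Proposition \ref{prop:keyD}, the conditioning event is the same, so $\E[\prod_k |J_{y_k}\F| \mid \delta_{\y}\F = 0] = \E[\prod_k |J_{y_k}\F| \mid D_{\y}\F = 0]$. On this event, Proposition \ref{prop:keyH} gives $J_{y_k}\F = \lambda_{\y}^k H_{\y}^k(\F)$ for each $k$, using the interpolation identity $J_{x_k}\F = J_{x_k}\K_{\y}^k\F$ from Property \ref{3} together with the definition $H_{\y}^k = h_{\y}^k \circ \K_{\y}^k$ and $h_{\y}^k = (J_{y_k}\circ \Proj_{\Ker(\delta_{\y})})/\lambda_{\y}^k$; one must also check that applying $\Proj_{\Ker(\delta_{\y})}$ to $\K_{\y}^k\F$ leaves the relevant Jacobian value unchanged when $\delta_{\y}\F = 0$, which follows since $\K_{\y}^k\F$ and its projection onto $\Ker(\delta_{\y})\cap V$ agree up to an element on which $\delta_{\y}$, hence the evaluations at the $y_j$, vanish — and the Jacobian $J_{y_k}$ restricted to $\Ker(\delta_{\y})\cap V$ is exactly what $h_{\y}^k$ encodes. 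Pulling the deterministic positive scalars $\lambda_{\y}^k$ out of the conditional expectation yields $\prod_k|\lambda_{\y}^k| \cdot \E[\prod_k |H_{\y}^k(\F)| \mid D_{\y}\F = 0]$.

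Combining the two computations gives
\[
\rho_\F(\y) = \frac{\prod_{k=1}^p |\lambda_{\y}^k|}{|\det A_{\y}|}\, \E\cdbr{\prod_{k=1}^p |H_{\y}^k\F|}{D_{\y}\F = 0}\,\psi_{D_{\y}\F}(\underline 0) = R(\y)\,\sigma_\F(\y),
\]
which is the desired identity. The main obstacle I anticipate is the bookkeeping around the projection $\Proj_{\Ker(\delta_{\y})}$: one has to argue carefully that substituting $\K_{\y}^k\F$ by its orthogonal projection onto $\Ker(\delta_{\y})\cap V$ does not alter the value $J_{y_k}$, which relies on the fact that the difference lies in a complement on which the point evaluations $\delta_{y_j}$ vanish, combined with the second interpolation identity $\delta_{\y}\F = \delta_{\y}\K_{\y}^k\F$ guaranteeing $\K_{\y}^k\F \in \Ker(\delta_{\y}) + (\text{something with the right evaluations})$ precisely when $\delta_{\y}\F = 0$. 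Everything else is the routine Gaussian density transformation rule and linearity of conditional expectation.
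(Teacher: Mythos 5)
Your proof is correct and follows essentially the same route as the paper's: substitute $\delta_{\y}=A_{\y}D_{\y}$ to extract the factor $|\det A_{\y}|^{-1}$ from the Gaussian density, identify the conditioning events via Proposition \ref{prop:keyD}, and replace $J_{y_k}\F$ by $\lambda_{\y}^k H_{\y}^k(\F)$ on that event via Proposition \ref{prop:keyH}. The only remark is that the projection step is cleaner than your phrasing suggests: on the event $\delta_{\y}\F=0$, Property \ref{3} gives $\delta_{\y}\K_{\y}^k\F=0$, so $\K_{\y}^k\F$ already lies in $\Ker(\delta_{\y})\cap V$ and $\Proj_{\Ker(\delta_{\y})}$ acts on it as the identity, whence $\lambda_{\y}^k H_{\y}^k(\F)=J_{y_k}(\K_{\y}^k\F)=J_{y_k}\F$ directly.
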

\begin{proof}
One has
\bega 
\rho(\y) 
&=\E\cdbr{\prod_{k=1}^p |J_{y_k}\F|}{A_{\y}D_{\y}\F = 0}\psi_{A_{\y}D_{\y}\F}(\underline{0})
\\
&=\E\cdbr{\prod_{k=1}^p |J_{y_k}\F|}{D_{\y}\F = 0}\psi_{D_{\y}\F}(\underline{0})\frac{1}{|\det A_{\y}|}
\\
&=\E\cdbr{\prod_{k=1}^p |H_{y_k}\F|}{D_{\y}\F = 0}\psi_{D_{\y}\F}(\underline{0})\frac{\prod_{k=1}^p|\lambda_{\y}^k|}{|\det A_{\y}|}.
\eega
\end{proof}

\begin{lemma}\label{lem:subsequence}
For every sequence $(\y_m)_{m\geq 0}$ of points in $\U^p\setminus \Delta$ that converges in $\U^p$ to a limit point $\x$, one can extract a subsequence $(\y_{\phi(m)})_{m\geq 0}$ such that 
\begin{itemize}
\item The sequence of free families $(D_{\y_{\phi(m)}})_{m\geq 0}$ on $W^*$ converges pointwise towards a limit free family $D$ on $W^*$.
\item For $1\leq k\leq p$, the sequence of polynomials $(H_{\y_{\phi(m)}}^k)_{m\geq 0}$ defined on $W$ converges pointwise towards a limit polynomial $H^k$ on $W$. \miknew{If the $p$-interpolator $K$ is strong, then $H^k\neq 0$.}
\end{itemize}
\end{lemma}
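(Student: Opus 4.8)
The plan is to prove Lemma \ref{lem:subsequence} by a standard compactness argument, exploiting that all the objects involved live, after suitable normalization, in compact sets. First I would package the data $D_{\y}$ into a matrix: recalling $\delta_{\y}=A_{\y}\underline{D}_{\y}$ and that $\delta_{\y}$ is determined by the $p$ evaluation points, the family $\underline{D}_{\y}$ restricted to $V_0$ is, by Proposition \ref{prop:keyD}, an orthonormal family of $dp$ vectors in $V_0^*$; hence it is a point in the Stiefel manifold of orthonormal $dp$-frames of $V_0^*$, which is compact. So along any sequence $\y_m\to\x$ we may extract a subsequence along which $D_{\y_m}|_{V_0}$ converges to some orthonormal family $D|_{V_0}$ in $V_0^*$. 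To upgrade convergence on $V_0$ to convergence on all of $W$, I would use the factorization: each component of $D_{\y}$ is of the form $\ell\circ \K_{\y}^0$ for a linear form $\ell$ on $V_0$ (because $\K_{\y}^0$ has image $V_0$ and $\delta_{\y}=\delta_{\y}\circ\K^0_{\y}$, so the Gram--Schmidt combinations of $\delta_{\y}$ factor through $\K^0_{\y}$); since $\y\mapsto \K^0_{\y}$ is continuous by Assumption \ref{2} and the Gram--Schmidt coefficients depend continuously on $\y$ by Remark \ref{rem:smoothGS}, the whole family $D_{\y_m}$ on $W^*$ converges pointwise. That the limit family $D$ is still free follows because its restriction to $V_0$ is already orthonormal, hence free, of full size $dp$.

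Next I would handle the polynomials $H^k_{\y}=h^k_{\y}\circ \K^k_{\y}$. By Proposition \ref{prop:keyH}, $h^k_{\y}|_V$ lies on the unit sphere of the finite-dimensional normed space $\Pt_d[V]$, which is compact; so along a further subsequence $h^k_{\y_m}|_V$ converges to some $h^k$ of norm one in $\Pt_d[V]$. Since $\y\mapsto \K^k_{\y}$ is continuous (Assumption \ref{2}) and composition of a fixed-degree polynomial with a continuously varying linear map is continuous in the relevant topologies, the composed polynomials $H^k_{\y_m}=h^k_{\y_m}\circ \K^k_{\y_m}$ on $W$ converge pointwise to $H^k:=h^k\circ \K^k$, where $\K^k:=\lim \K^k_{\y_m}$. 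One does a simultaneous diagonal extraction over $k=1,\dots,p$ (and over $D_{\y}$) so that all $p+1$ convergences hold along one subsequence $\y_{\phi(m)}$.

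The one genuinely non-automatic point --- and the main obstacle --- is the last sentence: when $\K$ is a strong $p$-interpolator, the limit $H^k$ is nonzero. The subtlety is that $h^k_{\y_m}$ stays on the unit sphere of $\Pt_d[V]$ so its limit $h^k$ is nonzero on $V$, but a priori $H^k=h^k\circ\K^k$ could vanish if $\K^k$ fails to be surjective in the limit, i.e. if the image of the limiting interpolator is too small to see the support of $h^k$. To rule this out I would argue as in Remark \ref{rem:Hkzero}: for strong interpolators $\K^k_{\y}$ is surjective onto $V$, and one needs this to persist in the limit. Concretely, I expect to use that surjectivity of a linear map into a fixed finite-dimensional space $V$ is an open condition (nonvanishing of a maximal minor), together with the explicit description of $\K^k$ as a Kergin-type operator, to conclude $\K^k$ is surjective as well; then $H^k=h^k\circ \K^k$ is a nonzero polynomial precisely because $h^k$ is nonzero on $V=\K^k(W)$. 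Alternatively, and perhaps more robustly, I would evaluate $H^k_{\y}$ at well-chosen points of $W$: pick $G\in W$ with $\K^k_{\y_m}G$ lying in a compact set where $h^k_{\y_m}$ is bounded away from zero, and pass to the limit to get $H^k(G)\neq 0$; making such a choice uniform along the subsequence is where a little care is required, and this is the step I would write out in full detail.

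Finally I would note that throughout I have only used that $V$ and $\Pt_d[V]$ are finite-dimensional (so unit spheres are compact), that $\K^0_{\y}$ has image exactly $V_0$, and the continuity assumptions \ref{2} of Definition \ref{def:interpolator}; no Gaussianity enters, consistent with the remark in Section \ref{rem:transversality}. This lemma then feeds directly into the proof of Theorem \ref{thm:general}: the pointwise limits $D$ and $H^k$ let one pass to the limit in the formulas \eqref{eq:R} for $R$ and $\sigma_\F$ near the diagonal $\Delta$, and the nonvanishing of $H^k$ in the strong case is exactly what yields the lower bound $c_F R_V\le \rho_\F$.
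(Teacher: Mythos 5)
Your argument is essentially the paper's own proof: extract a subsequence by compactness of the orthonormal frames in $V_0^*$ and of the unit sphere of $\Pt_d[V]$, then upgrade the convergence from $V$ to $W$ via the factorizations $D_{\y}=D_{\y}\circ\K^0_{\y}$ and $H^k_{\y}=h^k_{\y}\circ\K^k_{\y}$ together with the continuity of $\x\mapsto\K^k_{\x}$ (Assumption \ref{2}); freeness of the limit family and the diagonal extraction are handled exactly as in the paper. The one place where you overcomplicate, and where your proposed fix would not work as written, is the nonvanishing of $H^k$ in the strong case: you treat the surjectivity of the limiting operator $\K^k=\lim_m\K^k_{\y_m}$ as something to be \emph{transported} to the limit via openness of surjectivity, but openness is the wrong tool here (an open condition is not preserved under limits, so a limit of surjective maps need not be surjective in general). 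No limiting argument is needed: by Assumption \ref{2} the pointwise limit of $\K^k_{\y_m}$ is precisely $\K^k_{\x}$, and the definition of a strong $p$-interpolator requires $\K^k_{\x}$ to be surjective for \emph{every} $\x\in(\R^d)^p$, including the limit point on the diagonal; hence $H^k=h^k\circ\K^k_{\x}$ is nonzero because $h^k$ has unit norm. (For the later use in Lemma \ref{lem:bound} one in fact wants the slightly stronger statement that $h^k$ is nonzero on $V\cap\Ker(D)$, hence $H^k$ nonzero on $W\cap\Ker(D)$; this follows because each $h^k_{\y}$ factors through $\Proj_{\Ker(\delta_{\y})}$, but it is not required for the lemma as stated.) Your alternative route of evaluating at well-chosen $G\in W$ could also be made to work, but it is superfluous once one observes that the limit operator is $\K^k_{\x}$ itself.
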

\begin{proof}
The result can be deduced by Propositions \ref{prop:keyD} and \ref{prop:keyH} as follows. Let $(\y_m)_{m\geq 0}$ be a sequence of points in $\U^p\setminus \Delta$ that converges in $\U^p$ to a limit point $\x$. The space $V$ is finite dimensional, and so are the spaces $V^*$ and $\Pt_d[V]$. One can then find by compactness a subsequence $(\y_{\phi(m)})_{m\geq 0}$ such that 
\begin{itemize}
\item The sequence of orthogonal families $(D_{\y_{\phi(m)}})_{m\geq 0}$ in $V^*$ converges towards a limit orthogonal family $D$ in $V^*$.
\item For $1\leq k\leq p$, the sequence of unit norm polynomials $(h_{\y_{\phi(m)}}^k)_{m\geq 0}$ defined on $V$ converges towards a limit unit norm polynomial $h^k$ on $V$.
\end{itemize}
Now for $1\leq k\leq p$, one has the identities
\[D_{\y_m} = D_{\y_m}\circ \K_{\y_m}^k\quand H_{\y_m}^k=h_{\y_m}^k\circ \K_{\y_m}^k,\]
as functions on $W$. We can extend the mapping $D$ and the polynomial $H_k$ for $1\leq k\leq p$ defined as function on $V$, to the whole space $W$ via the identity
\[D=D\circ \K_{\x}^k\quand H^k:=h^k\circ \K_{\x}^k.\]
Since the mapping $\x\mapsto \K_{\x}^k$ is continuous (Assumption \ref{2}), we deduce the following pointwise convergence of functions on $W$:
\[\lim_{m\rightarrow+\infty} D_{\x_{\phi(m)}} = D\circ \K_{\x}^k=D\quand \lim_{m\rightarrow+\infty} H_{\x_{\phi(m)}}^k = h^k\circ\K_{\x}^k=H^k.\]
Since the family $D$ is free as a family in $V^*$ it is also free in $W^*.$ Similarly, the polynomial $h^k$ is non-zero on $V\cap\Ker(D)$ and thus\miknew{, when $\K^k_{\x}$ is surjective,} $H^k$ is non-zero on $W\cap\Ker(D)$.
\end{proof}

\begin{lemma}\label{lem:bound}
Let $K$ be a compact subset of $\,\U$. There are 
 constants $c_\F,C_\F$, depending only on the compact $K$ and on the distribution of the underlying process $\F$, such that for all $\y\in K^p\setminus\Delta$, one has
\[c_\F\leq\sigma_\F(\x)\leq C_\F.\]
\miknew{If the interpolator is strong, then $c_F>0$. }
\end{lemma}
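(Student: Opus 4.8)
\textbf{Proof plan for Lemma \ref{lem:bound}.}

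The plan is to argue by contradiction and compactness, exploiting the subsequence extraction of Lemma \ref{lem:subsequence}. Recall that
\[
\sigma_\F(\y) = \E\cdbr{\prod_{k=1}^p |H_{\y}^{k}\F|}{D_{\y}\F = 0}\,\psi_{D_{\y}\F}(\underline{0}).
\]
Since $K^p\setminus\Delta$ is not compact (the diagonal is missing), both bounds must be obtained by understanding what happens as $\y$ approaches $\Delta$. The key point is that, by Proposition \ref{prop:keyD}, the restriction $D_{\y}|_{V_0}$ is \emph{orthonormal}, hence the density $\psi_{D_{\y}\F}(\underline 0)$ of the non-degenerate Gaussian vector $D_{\y}\F$ in $(\R^d)^p$ is computed from a covariance matrix that depends continuously on $\y$ through $\Cov(\K^0_{\y}\F)$ (Lemma \ref{lemma-conti}), and the conditional expectation is an integral against a Gaussian density with a covariance that likewise varies continuously; all the relevant quantities extend continuously to $\y=\x\in\Delta$ once we pass to a convergent subsequence. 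First I would fix $K$ and suppose, for the upper bound, that there is a sequence $\y_m\in K^p\setminus\Delta$ with $\sigma_\F(\y_m)\to +\infty$. Passing to a subsequence as in Lemma \ref{lem:subsequence}, the free families $D_{\y_m}$ converge pointwise to a free family $D$ on $W^*$ and the polynomials $H^k_{\y_m}$ converge pointwise to polynomials $H^k$ on $W$. Then I would show that $\sigma_\F(\y_m)$ converges to the finite quantity
\[
\sigma(\x) := \E\cdbr{\prod_{k=1}^p |H^{k}\F|}{D\F = 0}\,\psi_{D\F}(\underline{0}),
\]
which contradicts divergence. The convergence of the conditional expectations follows because conditioning a non-degenerate Gaussian vector on $D_{\y_m}\F=0$ produces a Gaussian law on a complementary subspace whose mean and covariance are continuous functions of the (finitely many) inner products defining $D_{\y_m}$, and because $\prod_k|H^k_{\y_m}\F|$ is a polynomial in $\F$ of fixed degree with coefficients converging to those of $\prod_k|H^k\F|$; polynomial moments of Gaussians depend continuously on the covariance, so one gets convergence. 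For the lower bound, one argues identically: if $\sigma_\F(\y_m)\to 0$, extract the same kind of subsequence and obtain $\sigma(\x)=0$. The density factor $\psi_{D\F}(\underline 0)$ is strictly positive since $D\F$ is non-degenerate (the family $D$ is free in $W^*$), so $\sigma(\x)=0$ forces $\E[\prod_k|H^k\F|\mid D\F=0]=0$, i.e.\ $\prod_k H^k\F$ vanishes almost surely on the conditional law. Here is where strongness of the interpolator enters: by Lemma \ref{lem:subsequence}, when $\K$ is strong each limiting polynomial $H^k$ is nonzero on $W\cap\Ker(D)$, and since the conditional law of $\F$ given $D\F=0$ is a non-degenerate Gaussian on (a space mapping onto) $W\cap\Ker(D)$ — more precisely its support is all of $W\cap\Ker(D)$ because the $\CC^q$-support of $\F$ is $W$ — a nonzero polynomial cannot vanish almost surely, a contradiction. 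Hence $c_\F>0$ in the strong case. When the interpolator is merely adapted but not strong one only gets $c_\F\ge 0$, which is consistent with the statement.

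The main obstacle I anticipate is the careful justification of the convergence $\sigma_\F(\y_m)\to\sigma(\x)$: one must check that the conditional Gaussian measures $\Law(\F\mid D_{\y_m}\F=0)$ converge (say weakly, or in a sense strong enough to pass polynomial integrands to the limit) to $\Law(\F\mid D\F=0)$, uniformly enough that the possibly-unbounded integrand $\prod_k|H^k_{\y_m}\F|$ does not cause trouble. This is handled by noting that $D_{\y_m}\F$ and its conditioning only involve the finite-dimensional non-degenerate Gaussian vector $\K^0_{\y_m}\F\in V_0$ (with covariance continuous in $\y_m$ by Lemma \ref{lemma-conti}) together with the continuously varying maps $\K^k_{\y_m}$ (Assumption \ref{2}), so everything reduces to a finite-dimensional statement: a polynomial functional of a Gaussian vector, integrated against a conditional Gaussian law, is a continuous function of the covariance data and of the polynomial's coefficients. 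Moment bounds for Gaussians (e.g.\ via Wick's formula) give the required uniform integrability along the convergent subsequence. The rest is bookkeeping.
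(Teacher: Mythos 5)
Your proposal is correct and follows essentially the same route as the paper: extract a convergent subsequence via Lemma \ref{lem:subsequence}, show $\sigma_\F(\y_m)$ converges to the finite limit $\sigma(\x)=\E\cdbr{\prod_k|H^k\F|}{D\F=0}\psi_{D\F}(\underline 0)$ to get the upper bound, and use strongness (each limit $H^k$ nonzero on $W\cap\Ker(D)$, where the conditional law has full support) to get the strictly positive lower bound. Your contradiction phrasing and the extra care you take in justifying the convergence of the conditional Gaussian expectations are just a more explicit rendering of the compactness argument the paper sketches.
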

\begin{proof}
It suffices to show that for every sequence $(\y_m)_{m\geq 0}$ in $K^p\setminus\Delta$, there is a subsequence $(\y_{\phi(m)})_{m\geq 0}$ and constants $c,C$ (that may depend on the subsequence) such that
\[\forall m\geq 0,\quad c\leq \sigma(\y_{\phi(m)})\leq C.\]
Let $(\y_m)_{m\geq 0}$ be such a sequence. We take a converging subsequence towards a limit point $\x\in K^p$ such as in Lemma \ref{lem:subsequence} (we still denote this subsequence as $(\y_m)_{m\geq 0},$ for simplicity). Let $D$ and $(H^k)_{1\leq k\leq p}$ be the quantities defined as limit quantities in Lemma \ref{lem:subsequence}. We define 
\[\sigma_\F = \E\cdbr{\prod_{k=1}^p |H^k\F|}{D\F = 0}\psi_{D\F}(\underline{0}).\]
The quantity $\sigma_\F$ is well-defined since $D\F$ is a non degenerate Gaussian vector. 
From the convergence in Lemma \ref{lem:subsequence}, one has 
\[\lim_{m\rightarrow+\infty} \sigma_\F(\x_m) = \sigma_\F\in \R.\]
\miknew{The existence of the limit ensures the upper bound.
Finally, if the $p$-interpolator is strong, then for all $1\leq k\leq p$, the polynomial $H^k$ is nonzero. By construction, this is equivalent to say that $H^k$ is nonzero on the space $W\cap\Ker(D)$ (see Remark \ref{rem:Hkzero}), therefore the limit $\sigma_F$ is bounded away from zero. Thus, in this case, the sequence is eventually bonded below by a constant $c>0$.
}
\end{proof}
\begin{lemma}\label{lem:integrable}
For every compact subset $K\subset \R^d,$ the function $R(\x)$ is integrable on $K^p$ and 
\[\E\qwe\#Z(\F,K)^{[p]}\ewq<+\infty.\]
\end{lemma}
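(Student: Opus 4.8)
The plan is to exploit the observation, made right after \eqref{eq:R}, that $R$ depends only on the space $V$ and not on the field $\F$. Thus, to prove that $R$ is integrable on $K^p$ it suffices to exhibit \emph{one} Gaussian field, modelled on $V$, whose factorial $p$-th moment is manifestly finite, and then transfer this information to $R$ through the factorization $\rho=R\,\sigma$ of Proposition \ref{prop:krR} together with the two-sided bound of Lemma \ref{lem:bound}. Once $R$ is known to be integrable on $K^p$, finiteness of $\E[\#Z(\F,K)^{[p]}]$ for the field $\F$ of the theorem will follow from the upper bound $\rho_\F\le C_\F R$ and the validity of Kac--Rice established in Lemma \ref{lem:hyp}.

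Concretely, I would take for $G$ the standard non-degenerate Gaussian field valued in $V$ (say $G=\sum_i\gamma_i e_i$ for a basis $(e_i)$ of $V$ and i.i.d.\ standard Gaussians $\gamma_i$), whose $\CC^q$-support is all of $V$. Since $V$ is by hypothesis a \emph{strong} \horny{}, it is a \horny{} adapted to $W=V$ through a strong $p$-interpolator, so every construction of this subsection applies to $G$: Lemma \ref{lem:hyp} yields Kac--Rice \eqref{eq:kr} for $G$, Proposition \ref{prop:krR} gives $\rho_G=R\,\sigma_G$, and Lemma \ref{lem:bound} — using that the interpolator is strong — provides constants $0<c_G\le C_G$ with $c_G\le\sigma_G\le C_G$ on $K^p\setminus\Delta$. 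On the other hand, Property \ref{c} of Definition \ref{def:interpolating} furnishes a constant $C_K$ and a Lebesgue-negligible set $N_K\subset V$ with $\#Z(H,K)\le C_K$ for every $H\in V\setminus N_K$; since $G$ is non-degenerate it avoids $N_K$ almost surely, hence $\#Z(G,K)\le C_K$ a.s.\ and $\E[\#Z(G,K)^{[p]}]\le C_K^{[p]}<+\infty$. (For the concrete spaces of Lemmas \ref{lem:Vpgood}--\ref{lem:Vpgood4}, this last point is exactly Corollary \ref{cor:bezout}, with $C_K=p^d$.)

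Combining Kac--Rice for $G$ with this bound, $\int_{K^p\setminus\Delta}\rho_G\,\dd\y=\E[\#Z(G,K)^{[p]}]<+\infty$, whence $\int_{K^p\setminus\Delta}R\,\dd\y\le c_G^{-1}\int_{K^p\setminus\Delta}\rho_G\,\dd\y<+\infty$; since $\Delta$ is Lebesgue-null in $(\R^d)^p$ this is the asserted integrability of $R$ on $K^p$. Returning to $\F$, Lemma \ref{lem:bound} gives $\sigma_\F\le C_\F$ on $K^p\setminus\Delta$, so $\rho_\F=R\,\sigma_\F\le C_\F R$ by Proposition \ref{prop:krR}, and Lemma \ref{lem:hyp} makes Kac--Rice valid for $\F$; therefore
\[
\E\qwe\#Z(\F,K)^{[p]}\ewq=\int_{K^p\setminus\Delta}\rho_\F\,\dd\y\le C_\F\int_{K^p\setminus\Delta}R\,\dd\y<+\infty .
\]

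The step I expect to be the crux is conceptual rather than computational: one never attempts to estimate $R$ — the factor carrying the diagonal singularity — directly, which is precisely the route that becomes intractable for $p\ge 4$ when $d\ge 2$. Instead one uses that $R$ is universal in $\F$ and reads off its integrability from the single model where the zero count is a.s.\ bounded, namely a random polynomial field, via Bezout's theorem encoded in Property \ref{c}. The care required lies in checking that this model field genuinely satisfies every hypothesis invoked along the way — non-degeneracy, being a strong interpolator over $W=V$, validity of Kac--Rice, and the almost-sure bound of Property \ref{c} — so that both the factorization and the two-sided comparison $c_G R\le\rho_G\le C_G R$ are legitimate.
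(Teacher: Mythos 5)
Your proposal is correct and follows essentially the same route as the paper: compare $\rho_\F=R\,\sigma_\F$ with $\rho_G=R\,\sigma_G$ for a non-degenerate Gaussian field $G$ modelled on $V$ (for which the strong self-interpolator gives the lower bound $\sigma_G\ge c_G>0$ and Property C of Definition \ref{def:interpolating} gives the a.s.\ Bezout-type bound $\#Z(G,K)\le C_K$), and conclude via Kac--Rice. The only cosmetic difference is that you isolate the integrability of $R$ as an intermediate step before bounding $\E[\#Z(\F,K)^{[p]}]$, whereas the paper chains the inequalities directly; the content is identical.
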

\begin{proof}
Let $G$ be a non-degenerate Gaussian process on $V$. \miknew{By assumption $V$ is a \horny{}, with subspace $V_0$, that admits a strong $p$-interpolator adapted to itself}, thus we can apply the previous results to the random field $G$, and for $W=V$. In particular,  since the function $R,$ defined in \eqref{eq:R} depends only on $V$ \miknew{ and $V_0$,} we have that the Kac-Rice density for $G$ can be written as $\rho_{G}=R\cdot \sigma_G$ and that $\sigma_G|_{K^p\setminus \Delta}$ is bounded below by a constant $c_G>0,$ by Lemma \ref{lem:bound} \miknew{applied to the strong $p$-interpolator between $V$ and $(V,V_0)$}. Then,
\begin{align*}
\E\qwe\#Z(\F,K)^{[p]}\ewq&= \int_{K^p} R(\x)\sigma_\F(\x)\dd \x\\
&\leq \frac{C_\F}{c_G}\int_{K^p} R(\x)\sigma_G(\x)\dd \x\\
&\leq \frac{C_\F}{c_G}\E\qwe\# Z(G,K)^{[p]}\ewq,
\end{align*}
\miknew{where the constant $C_F>0$ is also provided by Lemma \ref{lem:bound}, applied to the (not necessarily strong) interpolator between $W$ and $(V,V_0)$.} According to the definition of a \sexy{}, for almost every realization of $G$, one has that
\[\#Z(\F,K)<C_K.\]
Then
\[\E\qwe\# Z(\F,K)^{[p]}\ewq \leq \frac{C_\F}{c_G}C_K^{[p]},\]
and the conclusion follows.
\end{proof}
Recollecting together all the results of this subsection, we obtain a proof of Theorem \ref{thm:general}. 

\subsection{Proof of Theorem \ref{thm1} and Theorem \ref{thm2}}
\begin{proof}
We start with the proof of Theorem \ref{thm1}. It is sufficient to prove the theorem when $K$ is an arbitrarily small compact neighborhood $\BB$ of $x$ in $\U,$ for every $x\in \U.$ Let $x\in \U$ and consider the \miknew{$p$-tuple $\x_0=(x,\dots,x)\in\U^{p}$. Then, the Kergin interpolator at $\x_0$ coincides with the Taylor polynomial of order $p-1$ at $x,$ by Theorem \ref{thm02}. It follows that the Gaussian vector
\be 
\Pi_{\x_0}^{\otimes d}F=T_{x}^{p-1}F
\ee
is non-degenerate. By continuity (Assumption \ref{2}), the same holds for all $\x\in \BB^{p},$ for some small enough $\BB$ convex neighborhood of $x$ such that $\BB\subset \U$.}\jump

Let us consider the restricted random field $F|_\BB\colon \BB\to \R^d$ and let $W\subset \CC^p(\BB,\R^d)$ be its support. We take $V = (\Pt_p^d(\R))^d$ and for $\x\in \BB^p$ and $1\leq k\leq p$, we define 
\be\label{eq:kergWV} 
\K_{\x}^k = (\Pi_{\x}^k)^{\otimes d}:W\to V,\ee
the Kergin interpolant at the $(p+1)-$tuple $(\x,x_k)$ on each coordinates. \miknew{Moreover, we take $V_0=(\Pt_{p-1}^d(\R))^d$ and define 
\be 
\K_{\x}^0 = (\Pi_{\x})^{\otimes d}:W\to V_0,
\ee
the Kergin interpolant at the $p$-tuple $\x$.
These maps are well defined on $W$ because $\BB$ is convex, see Theorem \ref{thm:micchelli}. The function $\K_{\x}^0$ is surjective because the random vector $\K_{\x}^0F$ is non-degenerate. 
The mapping $\K_{\x}^k$ satisfies Assumptions \ref{3} and \ref{2} of a $p$-interpolator, according to Theorem \ref{thm02}, and it satisfies Assumption \ref{4} because of  Lemma \ref{lem:Vpgood} above. Thus, $V$ is a \horny{} adapted to $W$.}\jump

To conclude, recall that there is a universal constant $C_p>0.$ such that for every positive random variable $\alpha,$ one has $\E[\alpha^{p}]\le C_p(1+ \E[\alpha^{[p]}]),$ hence the conclusion follows from Theorem \ref{thm:general}, which yields
\be 
\E\qwe\# Z(\F,\BB)^p\ewq\le C_p\tyu 1+\E\qwe \#Z(\F,\BB)^{[p]}\ewq\uyt<+\infty. 
\ee
As for the proof of Theorem \ref{thm2}, we can rehearse the same arguments, but this time using using Lemma \ref{lem:Vpgood2} \miknew{with $V=\nabla \Pt_{p+1}^d(\R)$ and $V_0=\nabla \Pt_{p}^d(\R)$. Similarly, for the complex case we use complex Kergin interpolation with interpolating space $V = (\Pt_{p}^d(\C))^d$ or $V=\nabla \Pt_{p+1}^d(\C)$ and Lemma \ref{lem:Vpgood3} and  \ref{lem:Vpgood4}.
}\jump


Now let $(F_n)_{n\geq 0}$ be a sequence of random field converging in distribution, for the $\CC^p$ topology, towards the Gaussian field $F$. Denote by $W_n$ the support of $F_n$. The Kergin interpolation $(\Pi^k_{\x})^{\otimes d}$ restricts to a $p$-interpolator from $W_n$ to $(\Pt_p^d(\R))^d$. A slight modification of the argument in the proof of Lemma \ref{lem:subsequence} shows that the constants in Lemma \ref{lem:bound} are uniformly bounded in $n$:
\be 
\inf_{n\geq0} c_n>0 \quand \sup_{n\geq0} C_n<+\infty,
\ee
Therefore $\E\qwe\# Z(\F_n,\BB)^p\ewq$ converges to $\E\qwe\# Z(\F,\BB)^p\ewq.$
\end{proof}
\subsection{Proof of Theorem \ref{thm3} and Theorem \ref{thm4}}
\begin{proof}
By covering the compact subset $K$ of $M$ with a finite family of charts, both theorems are reduced to the case in which $M$ is an open subset $\U$ of $\R^d$ equipped with a smooth metric $g$. In the following, $F$ is a Gaussian field from $\R^d$ to $\R^{d'}$, with $d'=d-n$, satisfying the hypotheses of Theorem \ref{thm3}. The case $n=0$ in both Theorem \ref{thm3}, and Theorem \ref{thm4} follows directly from Theorem \ref{thm1} and Theorem \ref{thm2}, respectively, since the cardinality of a subset of $\U$ is independent of the metric. 

Let the Grassmannian $G_n(\R^d)$ be the set of all $T\subset \R^d$ subspaces of dimension $n.$ For every point $x\in\R^d$ and $T\in G_n(\R^d)$, we define 
\be 
\mu(T,x):=\sqrt{\det g(x)|_T}.
\ee
Clearly, $\mu$ is a continuous function on $\U\times G_n(\R^d)$, therefore it is bounded by a positive constant $C$ on $K\times G_n(\R^d)$. For every $n$-dimensional submanifold $Z\subset \U,$ we have 
\be \label{eq:eucl}
\vol_g^n(Z\cap K)=\int_K\mu(T_xZ,x)
d\vol^n(x)\le C\vol^n(Z\cap K).
\ee
where $\vol^n=\vol^n_{\mathbbm{1}}$ is the standard $n$-volume measure in $\R^d.$ We are now reduced to the case where $F$ is an $\R^{d'}$-valued Gaussian field, from an open subset $\U$ of $\R^d$ endowed with the usual Euclidean metric.\jump

We now show that we can reduce to the case $n=0$. To this end, we complete the field $F$ to a random field $G=(F,\varphi_1,\dots,\varphi_n)$ from $\R^d$ to $\R^d$, so that we could apply Theorem \ref{thm1} to $G$. Observe that then, in general, 
\be\label{eq:ZFZG}
Z(F,K)\cap  \{\varphi_1=\dots =\varphi_n=0\}=Z(G,K).
\ee
In the following, $\varphi_1,\dots,\varphi_n$ are $n$ independent copies of the real Bargmann-Fock field $\varphi\colon \R^d\to \R$, defined in Section \ref{sec:BF}, also independent from $F$. We consider the auxiliary Gaussian field $G=(F,\varphi_1,\dots,\varphi_n)$ from $\U$ to $\R^d$. Observe that $\varphi$ has covariance function \be 
\E\kop\varphi(x)\varphi(y)\pok=e^{-\frac{|x-y|^2}{2}}=1-\frac{1}{2}(x-y)^T\one(x-y)+o(|x-y|^2),
\ee
so that the Adler-Taylor metric (see \cite[sec. 12.2]{AdlerTaylor} or \cite[page 173]{Aza09}) of $\varphi$ is exactly the standard Euclidean metric $\one$ on $\R^d$ (this is the reason why we choose the Bargmann-Fock field). Denote by $\E_\varphi$ the expectation with respect to the random variables $\varphi_1,\dots,\varphi_n$. By the Kac-Rice formula, for any $n$-dimensional submanifolds $Z\subset \U,$ we have the equality
\be \label{eq:Crofton}
\vol^n(Z\cap K)=v_n\E_\varphi\qwe \# \tyu Z\cap K\cap  \{\varphi_1=\dots =\varphi_n=0\}\uyt\ewq,
\ee
where $v_n=\frac12\vol^n(S^{n}),$ see \cite[Appendix B.1]{stec2022GeometrySpin}. We will use the identity \eqref{eq:Crofton} in the case when $Z=Z(F,\U)$, so that $Z\cap K=Z(F,K)$ and the identty \eqref{eq:ZFZG} holds almost surely.
Let us denote by $\E_F , \E_G=\E_F\E_\varphi$ the expectations with respect to $F,G$, respectively.
Notice that the field $G$ satisfies the hypotheses of Theorem \ref{thm1}, so that $\E_G\qwe \#  Z(G,K)^p\ewq<\infty$.
Moreover, by Jensen inequality we have that
\begin{equation}
\label{eq:jen}
\E_\varphi\qwe\#  Z(G,K)\ewq^p
\leq 
\E_\varphi\qwe \#  Z(G,K)^p\ewq.
\end{equation}
 We conclude by using, in this order, the inequalities \eqref{eq:eucl}, \eqref{eq:Crofton}, \eqref{eq:jen} and Theorem \ref{thm1} for $G$:
\bega 
\E_F\qwe \vol_g^n\tyu Z(F,K)\uyt^p\ewq &\le 
C^p\E_F\qwe  \vol^n\tyu Z(F,K)\uyt^p\ewq 
\\
&=
C^pv_n^p\E_F\qwe \E_\varphi\qwe\#  Z(G,K)\ewq^p\ewq
\\
&\le 
C^pv_n^p\E_G\qwe \#  Z(G,K)^p\ewq
\\
&<\infty.
\eega

For the holomorphic case we can rehearse the same argument, using $\varphi_\C$ and the holomorphic version of Theorem \ref{thm1}. 
\end{proof}
\begin{remark}
The identity \eqref{eq:Crofton} in the last proof is a special case of  \cite[Th. 9.9]{MathiStec} and is comparable to the classical Crofton formula, see \cite{Al07}. For more details abut such comparison see \cite[Sec. 9]{MathiStec} and  \cite[Sec. 5.1]{KRStec}.
\end{remark}
\begin{acknowledgements}
\noindent The authors are grateful to Giovanni Peccati for his careful reading of the paper and for the many comments which helped to improve the exposition. 
\end{acknowledgements}
\printbibliography

@article{AL_cursedpaper,
     author = {Ancona, Michele and Letendre, Thomas},
     title = {Multijet bundles and application to the finiteness of
moments for zeros of Gaussian fields},
     journal = {Hal preprint, hal-04165218},
     year = {2023},
}

@article{AL_rootsKost,
     author = {Ancona, Michele and Letendre, Thomas},
     title = {Roots of {Kostlan} polynomials: moments, strong {Law} of {Large} {Numbers} and {Central} {Limit} {Theorem}},
     journal = {Annales Henri Lebesgue},
     pages = {1659--1703},
     publisher = {\'ENS Rennes},
     volume = {4},
     year = {2021},
     doi = {10.5802/ahl.113},
     language = {en},
     url = {https://ahl.centre-mersenne.org/articles/10.5802/ahl.113/}
}

@article{LETEuler,
title = {Expected volume and Euler characteristic of random submanifolds},
journal = {Journal of Functional Analysis},
volume = {270},
number = {8},
pages = {3047-3110},
year = {2016},
issn = {0022-1236},
doi = {https://doi.org/10.1016/j.jfa.2016.01.007},
url = {https://www.sciencedirect.com/science/article/pii/S0022123616000173},
author = {Thomas Letendre},
keywords = {Euler characteristic, Riemannian random wave, Random polynomial, Real projective manifold},
abstract = {In a closed manifold of positive dimension n, we estimate the expected volume and Euler characteristic for random submanifolds of codimension r∈{1,…,n} in two different settings. On one hand, we consider a closed Riemannian manifold and some positive λ. Then we take r independent random functions in the direct sum of the eigenspaces of the Laplace–Beltrami operator associated to eigenvalues less than λ and consider the random submanifold defined as the common zero set of these r functions. We compute asymptotics for the mean volume and Euler characteristic of this random submanifold as λ goes to infinity. On the other hand, we consider a complex projective manifold defined over the reals, equipped with an ample line bundle L and a rank r holomorphic vector bundle E that are also defined over the reals. Then we get asymptotics for the expected volume and Euler characteristic of the real vanishing locus of a random real holomorphic section of E⊗Ld as d goes to infinity. The same techniques apply to both settings.}
}

@article {Al07,
    AUTHOR = {\'{A}lvarez Paiva, J. C. and Fernandes, E.},
     TITLE = {Gelfand transforms and {C}rofton formulas},
   JOURNAL = {Selecta Math. (N.S.)},
  FJOURNAL = {Selecta Mathematica. New Series},
    VOLUME = {13},
      YEAR = {2007},
    NUMBER = {3},
     PAGES = {369--390},
      ISSN = {1022-1824},
   MRCLASS = {53C65},
  MRNUMBER = {2383600},
       DOI = {10.1007/s00029-007-0045-5},
       URL = {https://doi.org/10.1007/s00029-007-0045-5},
}

@article{Sinha,
title={Manifold-theoretic compactifications of configuration spaces},
author={Sinha, Dev P},
journal={Selecta Mathematica},
volume={10},
pages={391--428},
year={2004},
publisher={Springer}}

@book{Dav75,
  title={Interpolation and Approximation},
  author={Davis, P.J.},
  isbn={9780486624952},
  lccn={75002568},
  series={Dover Books on Mathematics},
address={Dover},
  url={https://books.google.lu/books?id=2PaJAwAAQBAJ},
  year={1975},
  publisher={Dover Publications}
}

@article{Nie12,
title = {Scattered data interpolation and applications: A tutorial and survey},
author = {Nielson, RFGM and Franke, R},
journal = {Geometric Modeling: Methods and Their Applications},
pages = {131-160},
year={2012}}

@book {Hirsch,
    AUTHOR = {Hirsch, Morris W.},
     TITLE = {Differential topology},
    SERIES = {Graduate Texts in Mathematics},
    VOLUME = {33},
      NOTE = {Corrected reprint of the 1976 original},
 PUBLISHER = {Springer-Verlag},
address={New York},
      YEAR = {1994},
     PAGES = {x+222},
      ISBN = {0-387-90148-5},
   MRCLASS = {57-01 (58-01)},
  MRNUMBER = {1336822},
}

@book{lee2003introduction,
  title={Introduction to Smooth Manifolds},
  author={Lee, J.M.},
address= {Seattle, WA, USA},
  isbn={9780387954486},
  lccn={2002070454},
  series={Graduate Texts in Mathematics},
  url={https://books.google.nl/books?id=eqfgZtjQceYC},
  year={2003},
  publisher={Springer}
}

@book {BCR,
    AUTHOR = {Bochnak, Jacek and Coste, Michel and Roy,
              Marie-Fran{\c{c}}oise},
     TITLE = {Real algebraic geometry},
address={Berlin},
    SERIES = {Ergebnisse der Mathematik und ihrer Grenzgebiete (3)},
    VOLUME = {36},
 PUBLISHER = {Springer-Verlag},
      YEAR = {1998},
     PAGES = {x+430},
      ISBN = {3-540-64663-9},
   MRCLASS = {14Pxx (11E25 32C05 58A07)},
  MRNUMBER = {1659509},
MRREVIEWER = {A. Tognoli},
       DOI = {10.1007/978-3-662-03718-8},
       URL = {http://dx.doi.org/10.1007/978-3-662-03718-8},
}

@article {SodinTsirelson,
     AUTHOR = {Sodin, M. and Tsirelson, B.},
      TITLE = {Random complex zeroes. {I}. {A}symptotic normality},
    JOURNAL = {Israel J. Math.},
   FJOURNAL = {Israel Journal of Mathematics},
     VOLUME = {144},
       YEAR = {2004},
      PAGES = {125--149},
       ISSN = {0021-2172},
    MRCLASS = {60F05 (60D05 60G15)},
   MRNUMBER = {2121537},
MRREVIEWER = {M. Iosifescu},
        DOI = {10.1007/BF02984409},
        URL = {https://doi.org/10.1007/BF02984409},
}

@misc{stec2022GeometrySpin,
  author = {Lerario, A. and Marinucci, D. and Rossi, M. and Stecconi, M.},
  keywords = {Probability (math.PR), Mathematical Physics (math-ph), Differential Geometry (math.DG), FOS: Mathematics, FOS: Mathematics, FOS: Physical sciences, FOS: Physical sciences, 60G60 (Primary), 33C55, 53C65, 58A35},
  title = {Geometry and topology of spin random fields},
archivePrefix={arXiv},
  year = {2022},
  copyright = {arXiv.org perpetual, non-exclusive license},
eprint={2207.08413},
      primaryClass={math.PR},
url={https://arxiv.org/abs/2207.08413}
}

@article {GaWe2,
    AUTHOR = {Gayet, Damien and Welschinger, Jean-Yves},
     TITLE = {Betti numbers of random real hypersurfaces and determinants of
              random symmetric matrices},
   JOURNAL = {J. Eur. Math. Soc. (JEMS)},
  FJOURNAL = {Journal of the European Mathematical Society (JEMS)},
    VOLUME = {18},
      YEAR = {2016},
    NUMBER = {4},
     PAGES = {733--772},
      ISSN = {1435-9855},
   MRCLASS = {14Pxx (14Fxx 32U40 60B20)},
  MRNUMBER = {3474455},
       DOI = {10.4171/JEMS/601},
       URL = {http://dx.doi.org/10.4171/JEMS/601},
}

@article {NazarovSodin2016,
    AUTHOR = {Nazarov, F. and Sodin, M.},
     TITLE = {Asymptotic laws for the spatial distribution and the number of
              connected components of zero sets of {G}aussian random
              functions},
   JOURNAL = {Zh. Mat. Fiz. Anal. Geom.},
  FJOURNAL = {Zhurnal Matematichesko\u\i \ Fiziki, Analiza, Geometrii. Journal
              of Mathematical Physics, Analysis, Geometry},
    VOLUME = {12},
      YEAR = {2016},
    NUMBER = {3},
     PAGES = {205--278},
      ISSN = {1812-9471},
   MRCLASS = {60G15},
  MRNUMBER = {3522141},
MRREVIEWER = {Giacomo Aletti},
       DOI = {10.15407/mag12.03.205},
       URL = {http://dx.doi.org/10.15407/mag12.03.205},
}

@article{AnderssonPassare,
title = {Complex Kergin interpolation},
journal = {Journal of Approximation Theory},
volume = {64},
number = {2},
pages = {214-225},
year = {1991},
issn = {0021-9045},
doi = {https://doi.org/10.1016/0021-9045(91)90076-M},
url = {https://www.sciencedirect.com/science/article/pii/002190459190076M},
author = {Mats Andersson and Mikael Passare},
abstract = {We show that complex Kergin interpolation may be defind in any domain that is C-convex, whereas the original definition required ordinary, real convexity. We also provide a counterexample which essentially shows that this is the most general definition possible. Finally we give an application concerning approximation of entire functions.}
}

@book {AdlerTaylor,
    AUTHOR = {Adler, R. J. and Taylor, J. E.},
     TITLE = {Random fields and geometry},
    SERIES = {Springer Monographs in Mathematics},
 PUBLISHER = {Springer},
address={New York},
      YEAR = {2007},
     PAGES = {xviii+448},
      ISBN = {978-0-387-48112-8},
   MRCLASS = {60G60 (58J65)},
  MRNUMBER = {2319516},
MRREVIEWER = {Jos{\'e} Rafael Le{\'o}n},
}

@misc{dtgrf,
    title={Differential Topology of {G}aussian Random Fields},
    author={Lerario, A. and Stecconi, M.},
doi={},
url={https://arxiv.org/abs/1902.03805},

publisher = {arXiv},

    year={2019},
    eprint={1902.03805},
    archivePrefix={arXiv},
    primaryClass={math.DG}
}

@article{KRStec,
	author = {Stecconi, M.},
	date = {2022/02/21},
	date-added = {2022-02-23 09:41:49 +0100},
	date-modified = {2022-02-23 09:41:49 +0100},
	doi = {10.1007/s13324-022-00654-0},
	id = {Stecconi2022},
	isbn = {1664-235X},
	journal = {Analysis and Mathematical Physics},
	number = {2},
	pages = {44},
	title = {Kac-{R}ice formula for transverse intersections},
	url = {https://doi.org/10.1007/s13324-022-00654-0},
	volume = {12},
	year = {2022},
	bdsk-url-1 = {https://doi.org/10.1007/s13324-022-00654-0}}

@article{stec2019MaxTyp,
      title={Maximal and Typical Topology of Real Polynomial Singularities}, 
      author={Lerario, A. and Stecconi, M.},
      year={2019},
      Journal={Ann.Inst.Fourier, in press},
eprint={1906.04444},
      archivePrefix={arXiv},
      primaryClass={math.AG}
}

@misc{MathiStec,
  doi = {10.48550/ARXIV.2210.11214},
  
  url = {https://arxiv.org/abs/2210.11214},
  
  author = {Mathis, Léo and Stecconi, Michele},
  
  keywords = {Probability (math.PR), Differential Geometry (math.DG), Metric Geometry (math.MG), FOS: Mathematics, FOS: Mathematics, 60D05, 53C65, 52A38, 53B40, 60G60},
  
  title = {Expectation of a random submanifold: the zonoid section},
  
  publisher = {arXiv},
  
  year = {2022},
    eprint={2210.11214},
    archivePrefix={arXiv},
    primaryClass={math.PR}
}

@article{KERGIN,
title = {A natural interpolation of Ck functions},
journal = {Journal of Approximation Theory},
volume = {29},
number = {4},
pages = {278-293},
year = {1980},
issn = {0021-9045},
doi = {https://doi.org/10.1016/0021-9045(80)90116-1},
url = {https://www.sciencedirect.com/science/article/pii/0021904580901161},
author = {Paul Kergin}
}

@article{MICCHELLI,
title = {A formula for Kergin interpolation in Rk},
journal = {Journal of Approximation Theory},
volume = {29},
number = {4},
pages = {294-296},
year = {1980},
issn = {0021-9045},
doi = {https://doi.org/10.1016/0021-9045(80)90117-3},
url = {https://www.sciencedirect.com/science/article/pii/0021904580901173},
author = {Charles A Micchelli and Pierre Milman}
}

@article {Anc21c,
    AUTHOR = {Ancona, Michele and Letendre, Thomas},
     TITLE = {Zeros of smooth stationary {G}aussian processes},
   JOURNAL = {Electron. J. Probab.},
  FJOURNAL = {Electronic Journal of Probability},
    VOLUME = {26},
      YEAR = {2021},
     PAGES = {Paper No. 68, 81},
   MRCLASS = {60F05 (60F15 60F17 60F25 60G10 60G15 60G55 60G57)},
  MRNUMBER = {4262341},
       DOI = {10.1214/21-ejp637},
       URL = {https://doi.org/10.1214/21-ejp637},
}

@article{Arm23,
  title={On the finiteness of the moments of the measure of level sets of random fields},
  author={Armentano, Diego and Aza\"{i}s, Jean Marc and Dalmao, Federico and Le{\'o}n, Jos{\'e} Rafael and Mordecki, Ernesto},
  journal={Brazilian Journal of Probability and Statistics},
  volume={37},
  number={1},
  pages={219--245},
  year={2023},
  publisher={Brazilian Statistical Association}
}

@book {Aza09,
    AUTHOR = {Aza\"{i}s, Jean-Marc and Wschebor, Mario},
     TITLE = {Level sets and extrema of random processes and fields},
 PUBLISHER = {John Wiley \& Sons, Inc.},
address={Hoboken, NJ},
      YEAR = {2009},
     PAGES = {xii+393},
      ISBN = {978-0-470-40933-6},
   MRCLASS = {60-02 (60E15 60G05 60G15 60G60 60G70)},
  MRNUMBER = {2478201},
MRREVIEWER = {Anna Amirdjanova},
       DOI = {10.1002/9780470434642},
       URL = {https://doi.org/10.1002/9780470434642},
}

@article{Aza22,
  title={Mean number and correlation function of critical points of isotropic Gaussian fields and some results on GOE random matrices},
  author={Azaïs, Jean-Marc and Delmas, C{\'e}line},
  journal={Stochastic Processes and their Applications},
  volume={150},
  pages={411--445},
  year={2022},
  publisher={Elsevier}
}

@article {Bel18,
    AUTHOR = {Beliaev, Dmitry and Wigman, Igor},
     TITLE = {Volume distribution of nodal domains of random band-limited
              functions},
   JOURNAL = {Probab. Theory Related Fields},
  FJOURNAL = {Probability Theory and Related Fields},
    VOLUME = {172},
      YEAR = {2018},
    NUMBER = {1-2},
     PAGES = {453--492},
      ISSN = {0178-8051},
   MRCLASS = {58J50 (35P20 35R01 60F99 60G60)},
  MRNUMBER = {3851836},
MRREVIEWER = {Bruce A. Watson},
       DOI = {10.1007/s00440-017-0813-x},
       URL = {https://doi.org/10.1007/s00440-017-0813-x},
}

@article{Bel19,
  title={Two point function for critical points of a random plane wave},
  author={Beliaev, Dmitry and Cammarota, Valentina and Wigman, Igor},
  journal={International Mathematics Research Notices},
  volume={2019},
  number={9},
  pages={2661--2689},
  year={2019},
  publisher={Oxford University Press}
}

@article{Bel22,
  title={A central limit theorem for the number of excursion set components of Gaussian fields},
  author={Beliaev, Dmitry and McAuley, Michael and Muirhead, Stephen},
  journal={arXiv preprint arXiv:2205.09085},
  year={2022}
}

@article {Ber77,
    AUTHOR = {Berry, M. V.},
     TITLE = {Regular and irregular semiclassical wavefunctions},
   JOURNAL = {J. Phys. A},
  FJOURNAL = {Journal of Physics. A. Mathematical and General},
    VOLUME = {10},
      YEAR = {1977},
    NUMBER = {12},
     PAGES = {2083--2091},
      ISSN = {0305-4470},
   MRCLASS = {81.58 (28A65 58F15)},
  MRNUMBER = {489542},
       URL = {http://stacks.iop.org/0305-4470/10/2083},
}

@article{Bla19,
  title={Limit theory for geometric statistics of point processes having fast decay of correlations},
  author={B{\l}aszczyszyn, Bart{\l}omiej and Yogeshwaran, Dhandapani and Yukich, Joseph E},
  journal={The Annals of Probability},
  volume={47},
  number={2},
  pages={835--895},
  year={2019},
  publisher={Institute of Mathematical Statistics}
}

@Article{Can16,
 Author = {Yaiza {Canzani} and Boris {Hanin}},
 Title = {{Local universality for zeros and critical points of monochromatic random waves}},
 FJournal = {{Communications in Mathematical Physics}},
 Journal = {{Commun. Math. Phys.}},
 ISSN = {0010-3616; 1432-0916/e},
 Volume = {378},
 Number = {3},
 Pages = {1677--1712},
 Year = {2020},
 Publisher = {Springer, Berlin/Heidelberg},
 Language = {English},
 MSC2010 = {58J65 35P20}
}

@article {Cuz75,
    AUTHOR = {Cuzick, Jack},
     TITLE = {Conditions for finite moments of the number of zero crossings
              for {G}aussian processes},
   JOURNAL = {Ann. Probability},
  FJOURNAL = {The Annals of Probability},
    VOLUME = {3},
      YEAR = {1975},
    NUMBER = {5},
     PAGES = {849--858},
      ISSN = {0091-1798},
   MRCLASS = {60G15},
  MRNUMBER = {388515},
MRREVIEWER = {Georg Lindgren},
       DOI = {10.1214/aop/1176996271},
       URL = {https://doi.org/10.1214/aop/1176996271},
}

@article{Die20,
  title={Small scale CLTs for the nodal length of monochromatic waves},
  author={Dierickx, Gauthier and Nourdin, Ivan and Peccati, Giovanni and Rossi, Maurizia},
  journal={arXiv preprint arXiv:2005.06577},
  year={2020}
}

@article{Eke14,
  title={Recovering the good component of the Hilbert scheme},
  author={Ekedahl, Torsten and Skjelnes, Roy},
  journal={Annals of mathematics},
  pages={805--841},
  year={2014},
  publisher={JSTOR}
}

@article {Gay17,
    AUTHOR = {Gayet, Damien and Welschinger, Jean-Yves},
     TITLE = {Betti numbers of random nodal sets of elliptic
              pseudo-differential operators},
   JOURNAL = {Asian J. Math.},
  FJOURNAL = {Asian Journal of Mathematics},
    VOLUME = {21},
      YEAR = {2017},
    NUMBER = {5},
     PAGES = {811--839},
      ISSN = {1093-6106},
   MRCLASS = {58J40 (34L20 60D05)},
  MRNUMBER = {3767266},
MRREVIEWER = {Nikhil Savale},
       DOI = {10.4310/AJM.2017.v21.n5.a2},
       URL = {https://doi.org/10.4310/AJM.2017.v21.n5.a2},
}

@article {Gas21,
    AUTHOR = {Gass, Louis},
     TITLE = {Almost-sure asymptotics for Riemannian random waves},
   JOURNAL = {To appear in Bernoulli Journal},
      YEAR = {2021},
       URL = {https://arxiv.org/abs/2005.06389},
}

@misc{Gas21t,
      title={\textcolor{white}{z}\!Cumulants asymptotics for the zeros counting measure of real Gaussian processes}, 
      author={Gass, Louis},
      year={2021},
      eprint={2112.08247},
      archivePrefix={arXiv},
      primaryClass={math.PR}
}

@article {Kac43,
    AUTHOR = {Kac, M.},
     TITLE = {On the average number of real roots of a random algebraic
              equation},
   JOURNAL = {Bull. Amer. Math. Soc.},
  FJOURNAL = {Bulletin of the American Mathematical Society},
    VOLUME = {49},
      YEAR = {1943},
     PAGES = {314--320},
      ISSN = {0002-9904},
   MRCLASS = {41.1X},
  MRNUMBER = {7812},
MRREVIEWER = {P. Erd\H{o}s},
       DOI = {10.1090/S0002-9904-1943-07912-8},
       URL = {https://doi.org/10.1090/S0002-9904-1943-07912-8},
}

@article {Kri13,
    AUTHOR = {Krishnapur, Manjunath and Kurlberg, P\"{a}r and Wigman, Igor},
     TITLE = {Nodal length fluctuations for arithmetic random waves},
   JOURNAL = {Ann. of Math. (2)},
  FJOURNAL = {Annals of Mathematics. Second Series},
    VOLUME = {177},
      YEAR = {2013},
    NUMBER = {2},
     PAGES = {699--737},
      ISSN = {0003-486X},
   MRCLASS = {58C40 (60G60)},
  MRNUMBER = {3010810},
MRREVIEWER = {Nelia Charalambous},
       DOI = {10.4007/annals.2013.177.2.8},
       URL = {https://doi.org/10.4007/annals.2013.177.2.8},
}

@article{Lad22,
  title={Local repulsion of planar Gaussian critical points},
  author={Ladgham, Safa and Lachi{\`e}ze-Rey, Rapha{\"e}l},
  journal={arXiv preprint arXiv:2209.04150},
  year={2022}
}

@article{Mal94,
  title={Some finiteness conditions for factorial moments of the number of zeros of Gaussian field zeros},
  author={Malevich, Tat'yana L'vovna and Volodina, LN},
  journal={Theory of Probability \& Its Applications},
  volume={38},
  number={1},
  pages={27--45},
  year={1994},
  publisher={SIAM}
}

@article {Mar16,
    AUTHOR = {Marinucci, Domenico and Peccati, Giovanni and Rossi, Maurizia
              and Wigman, Igor},
     TITLE = {Non-universality of nodal length distribution for arithmetic
              random waves},
   JOURNAL = {Geom. Funct. Anal.},
  FJOURNAL = {Geometric and Functional Analysis},
    VOLUME = {26},
      YEAR = {2016},
    NUMBER = {3},
     PAGES = {926--960},
      ISSN = {1016-443X},
   MRCLASS = {60G60 (35J10 35P20 58J50 60B10 60D05)},
  MRNUMBER = {3540457},
       DOI = {10.1007/s00039-016-0376-5},
       URL = {https://doi.org/10.1007/s00039-016-0376-5},
}

@article {Naz09,
    AUTHOR = {Nazarov, Fedor and Sodin, Mikhail},
     TITLE = {On the number of nodal domains of random spherical harmonics},
   JOURNAL = {Amer. J. Math.},
  FJOURNAL = {American Journal of Mathematics},
    VOLUME = {131},
      YEAR = {2009},
    NUMBER = {5},
     PAGES = {1337--1357},
      ISSN = {0002-9327},
   MRCLASS = {60F10 (33C55 43A90 60D05)},
  MRNUMBER = {2555843},
       DOI = {10.1353/ajm.0.0070},
       URL = {https://doi.org/10.1353/ajm.0.0070},
}

@article{Naz12,
  title={Correlation functions for random complex zeroes: strong clustering and local universality},
  author={Nazarov, Fedor and Sodin, Mikhail},
  journal={Communications in Mathematical Physics},
  volume={310},
  number={1},
  pages={75--98},
  year={2012},
  publisher={Springer}
}

@article {Nou19,
    AUTHOR = {Nourdin, Ivan and Peccati, Giovanni and Rossi, Maurizia},
     TITLE = {Nodal statistics of planar random waves},
   JOURNAL = {Comm. Math. Phys.},
  FJOURNAL = {Communications in Mathematical Physics},
    VOLUME = {369},
      YEAR = {2019},
    NUMBER = {1},
     PAGES = {99--151},
      ISSN = {0010-3616},
   MRCLASS = {58J50 (60F05 60G60)},
  MRNUMBER = {3959555},
MRREVIEWER = {Jos\'{e} Rafael Le\'{o}n},
       DOI = {10.1007/s00220-019-03432-5},
       URL = {https://doi.org/10.1007/s00220-019-03432-5},
}

@article {Ric45,
    AUTHOR = {Rice, S. O.},
     TITLE = {Mathematical analysis of random noise},
   JOURNAL = {Bell System Tech. J.},
  FJOURNAL = {The Bell System Technical Journal},
    VOLUME = {24},
      YEAR = {1945},
     PAGES = {46--156},
      ISSN = {0005-8580},
   MRCLASS = {60.0X},
  MRNUMBER = {11918},
MRREVIEWER = {M. Kac},
       DOI = {10.1002/j.1538-7305.1945.tb00453.x},
       URL = {https://doi.org/10.1002/j.1538-7305.1945.tb00453.x},
}

@incollection {Zel09,
    AUTHOR = {Zelditch, Steve},
     TITLE = {Real and complex zeros of {R}iemannian random waves},
 BOOKTITLE = {Spectral analysis in geometry and number theory},
    SERIES = {Contemp. Math.},
    VOLUME = {484},
     PAGES = {321--342},
 PUBLISHER = {Amer. Math. Soc.},
ADDRESS	= {Providence, RI},
      YEAR = {2009},
   MRCLASS = {58J51 (35J05)},
  MRNUMBER = {1500155},
MRREVIEWER = {Julie Rowlett},
       DOI = {10.1090/conm/484/09482},
       URL = {https://doi.org/10.1090/conm/484/09482},
}
\end{document}